\documentclass[11pt,article]{amsart}
\usepackage{color}
\usepackage[numbers,sort&compress]{natbib}
\usepackage{pifont,latexsym,ifthen,amsthm,rotating,calc,textcase,booktabs}
\usepackage{amssymb, amsthm, amsmath, mathtools, bm, bbm}
\usepackage{amsthm}
\numberwithin{equation}{section}
\usepackage[dvipsnames, svgnames, x11names]{xcolor}
\usepackage{graphicx}
\usepackage{tikz}
\usepackage{stmaryrd}
\usepackage{graphicx}
\usepackage[latin1]{inputenc}
\usepackage[T1]{fontenc}
\usepackage[english]{babel}
\usepackage{listings}
\usepackage{xcolor,mathrsfs,url}
\usepackage{ifthen}
\usepackage{fancyhdr}
\usepackage{verbatim}
\usepackage{enumitem}
\usepackage{todonotes}
\usepackage{float}
\usepackage{subfigure}
\usepackage{hyperref}
\usepackage{appendix}
\usepackage[percent]{overpic}
\newtheorem{theorem}{Theorem}[section]
\newtheorem{lemma}[theorem]{Lemma}

\newtheorem{proposition}[theorem]{Proposition}

\newtheorem{Assumption}{Assumption}[section]

\newtheorem{RHP}{RH problem}[section]

\usepackage {caption}

\newcommand*{\dif}{\mathop{}\!\mathrm{d}}
\definecolor{Stealth}{RGB}{50, 100, 150}

\usepackage{tikz} % Optional PGF libraries
\usetikzlibrary{shapes,decorations.pathmorphing}
\usepackage{pgf}
\usepackage{pgflibraryarrows}
\usepackage{pgflibrarysnakes}
\usetikzlibrary{decorations.text}
\usepgfmodule{shapes}
\usetikzlibrary{decorations.pathmorphing}
\usetikzlibrary{decorations.markings}
\usetikzlibrary{patterns}
\usetikzlibrary{automata}
\usetikzlibrary{positioning}
\usetikzlibrary {arrows.meta}
\usepackage{pgfplots}
\usetikzlibrary{backgrounds}
\tikzset{partial ellipse/.style args={#1:#2:#3}{insert path={+ (#1:#3) arc (#1:#2:#3)} }}
\tikzset{-Stealth-/.style={decoration={ markings, mark=at position #1 with {\arrow{Stealth}}},postaction={decorate}}}
\tikzset{->-/.style={decoration={ markings, mark=at position #1 with {\arrow{>}}},postaction={decorate}}}

\definecolor{Dgreen}{RGB}{0,153,0}

\hypersetup{hypertex=true,
            colorlinks=true,
            linkcolor=blue,
            anchorcolor=blue,
            citecolor=red}

\DeclareMathOperator*{\im}{Im}
\DeclareMathOperator*{\re}{Re}
\newcommand{\R}{\mathbb{R}}

\newcommand{\bigo}[1]{\mathcal{O} \left( #1 \right) }

\newcommand{\dd}{\mathrm{d}}

\newcommand{\E}{\mathrm{e}}

\begin{document}

\title[ Painlev\'e \uppercase\expandafter{\romannumeral34\relax} asymptotics for the defocusing mKdV equation in transition regions]{ Painlev\'e \uppercase\expandafter{\romannumeral34\relax} asymptotics for the defocusing mKdV equation with step-like initial data in transition regions}
\author{Engui Fan$^1$, Zhaoyu Wang$^2$,   Yidan Zhang$^3$ }
\footnotetext[1]{ \  School of Mathematical Sciences and Key Laboratory for Nonlinear Science, Fudan   University, Shanghai 200433,  China.   E-mail: \texttt{faneg@fudan.edu.cn.}}
\footnotetext[2]{ \
	Department of Mathematics and Newtouch Center for Mathematics of Shanghai University, Shanghai University, Shanghai 200444,  China. E-mail: \texttt{zhaoyuwang@shu.edu.cn.}}
\footnotetext[3]{ \  School of Mathematical Sciences, Fudan   University, Shanghai 200433,  China. E-mail: \texttt{23110840018@m.fudan.edu.cn.}}

\maketitle
\begin{abstract}
In this paper, we study the Cauchy problem for the defocusing modified Korteweg-de Vries (mKdV) equation with a step-like initial data.
Based on the Riemann-Hilbert problem associated with the mKdV equation, we
derive the long-time asymptotic expansion of the solution to the defocusing mKdV equation in two transition regions
 using the nonlinear steepest descent method. It comes out that the leading term in the expansion is shown to match the corresponding background constants. The subleading term, however, decays at the order $\mathcal{O}(t^{-2/3})$, and its coefficient is derived from the associated Painlev\'e \uppercase\expandafter{\romannumeral34\relax} model.
%In a recent paper, we study the long time asymptotics for Cauchy problem of the defocusing modified Korteweg-de Vries (mKdV) equation with step-like initial data approaching nonzero constants $c_l$ and $c_r$ as $x \to -\infty$ and $x\to+\infty$ for $c_l>c_r>0$, whose solution exhibits a rarefaction wave structure. The present paper deals with the asymptotic analysis in two transition regions. After characterizing the global solution of the Cauchy problem in terms of a Riemann-Hilbert (RH) problem, we apply the nonlinear steepest descent method to it. With the leading term being the background constants, the sub-leading term comes from the Painlev\'e \uppercase\expandafter{\romannumeral34\relax} parametrix and admits the decay of $\mathcal{O}(t^{-2/3})$ as $t$ large.
\\[4pt]
{\bf Key words:} Defocusing mKdV equation, Riemann-Hilbert problem, nonlinear steepest descent method, long-time asymptotics, Painlev\'e \uppercase\expandafter{\romannumeral34\relax}.\\[4pt]
{\bf MSC 2020:}  35Q51; 35Q15; 35C20; 35P25; 34M55.

\end{abstract}

\tableofcontents
%\quad

\section{Introduction}
\noindent

In this paper, we investigate the  long-time
 asymptotics in transition regions  for the    defocusing modified Korteweg-de Vries (mKdV) equation with step-like
initial data
\begin{alignat}{2}
&q_t(x,t)-6q^2(x,t)q_{x}(x,t)+q_{xxx}(x,t)=0, &\qquad&x\in\R,\quad t> 0, \label{equ:mkdv} \\
&q(x,0)=q_0(x)\to
\begin{cases}
c_{l}, &x\to -\infty,\\
c_{r}, &x\to +\infty,
\end{cases} \label{Initial data}
\end{alignat}
where $c_{l}>c_{r}>0$, and under the assumption that the solution $q(x,t)$ approaches to some nonzero real constants as $x \to \pm \infty$ respectively,
i.e.,
\begin{equation} \label{boundaryconditions}
q(x,t)\to
\begin{cases}
c_{l}, & x\to -\infty, \\
c_{r}, & x\to +\infty,
\end{cases}
\end{equation}

% ensure compatibility of the boundary conditions \eqref{boundaryconditions} with the evolution
%\eqref{equ:mkdv}--\eqref{Initial data}, the initial data $q_0(x)$ must satisfy
%\begin{equation}\label{condition:q0limits}
%q_0(x)\to
%\begin{cases}
%c_{l}, &x\to -\infty,\\
%c_{r}, &x\to +\infty.
%\end{cases}
%\end{equation}
%The precise rates of convergence in \eqref{boundaryconditions} and \eqref{condition:q0limits} will be specified in the main theorems.

The defocusing mKdV equation \eqref{equ:mkdv} is a canonical model in mathematical physics,
describing various nonlinear phenomena such as acoustic waves and phonons in certain
anharmonic lattices \cite{Zab1967,Ono1992}, and Alfv\'en waves in cold, collisionless
plasmas \cite{kaku1969,Kha1998}. Early works on the long-time asymptotics of the mKdV equation \eqref{equ:mkdv} concentrated on initial data vanishing as $x\to\pm\infty$.
%see the works \cite{AblowitzSegur, Zakharov-Manakov-1976} based on the inverse scattering method.
%A major advance was made by Deift and Zhou \cite{DZAnn}, who introduced the nonlinear steepest descent method for an oscillatory RH problem and used it to obtain detailed long-time asymptotics of the defocusing mKdV equation with Schwartz class initial data. Such a technique has been widely applied to a variety of integrable nonlinear partial differential equations (PDEs) \cite{DIZSurvey}.
We also refer the readers to \cite{CandLdmkdv,lenellsmkdv} for long-time asymptotics of the defocusing mKdV equation with vanishing initial data in lower regularity spaces.

The study of long-time asymptotics for Cauchy problems of nonlinear integrable systems with step-like initial data has a long history
and can be traced back to the pioneering work \cite{GuPitae} of Gurevich and Pitaevskii on the Korteweg-de Vries (KdV) equation.
Working within the framework of Whitham modulation theory \cite{Whitham1974}, they predicted the emergence of
highly oscillatory structures, now referred to as dispersive shock waves in the long-time dynamics.
The rigorous mathematical justification of this phenomenon was subsequently done by Khruslov \cite{Khruslov1976}
via the inverse scattering transform in terms of the Marchenko integral equations and the so-called asymptotic soliton.
Rigorous asymptotic analysis of step-like Cauchy problems for integrable systems began with the papers \cite{MonItsKot2009, BuckVena2007}.
Since then, the long-time asymptotics of integrable systems with step-like initial data have been extensively
studied: for the KdV equation \cite{EgorovaKDVstep-a, EgorovaKDVstep-b};
for the focusing nonlinear Schr\"odinger equation (NLS) \cite{MonKotSheIMRN,MonLenSheCMP,MonLenSheCMP3};
for the focusing mKdV equation \cite{KotMinakovJMP,MinakovJPAMaTheor,KotMinakovJMPAG1,KotMinakovJMPAG2,GraMinakovSIAM};
as well as for some nonlocal integrable systems \cite{RDJDE2021,RDCMP,RDJMPAG,xnonlocalmkdv}.

Since the scaling transformation
$$q\to-q, \ \ x\to -x,\ \ t\to-t$$
 keep the form of the defocusing mKdV equation \eqref{equ:mkdv} unchanged,
it is sufficient to consider two  cases  %$c_l \ge |c_r|$ ( or $c_r \ge |c_l|$).
 $c_l \ge c_r \ge 0$ and $c_l >0>c_r>-c_l$, which  lead to quite different asymptotic analysis that a kink soliton region appears in the later case \cite{WW2026}.
  For the case  $c_r>c_l>0$, the long-time asymptotics of dispersive shock wave solution has been obtained in \cite{WW2026}.  For the case $c_l>c_r>0$,
 we   obtained   long-time asymptotics for the Cauchy problem \eqref{equ:mkdv}--\eqref{Initial data}  in   three main regions
 in our recent paper \cite{xuzyd}.  More precisely, we summarize    as follows (see Figure \ref{fig:cone}):
\begin{itemize}
    \item Left region $\mathcal{R}_{\textup{\uppercase\expandafter{\romannumeral1}}}$:= $\left\{(x,t): \xi<-\frac{c_l^2}{2}, \ \xi:=\frac{x}{12t} \right\}$,
    in which the leading term is given by the constant $c_l$ and the sub-leading term is expressed in terms of the parabolic cylinder function;
    \item Middle region $\mathcal{R}_{\textup{\uppercase\expandafter{\romannumeral1}}}$:= $\left\{(x,t): -\frac{c_l^2}{2}<\xi<-\frac{c_r^2}{2} \right\}$, in which  the leading term is given by a slowly varying factor, while the sub-leading term is derived from the Airy function;
    \item Right region $\mathcal{R}_{\textup{\uppercase\expandafter{\romannumeral3}}}$:=  $\left\{(x,t):-\frac{c_r^2}{2} < \xi<-\frac{c_r^2}{6} \right\}\cup \left\{(x,t): \xi>-\frac{c_r^2}{6} \right\}$, in which the leading term is given by the constant $c_r$, and the sub-leading term arises either from the Bessel function or from the non-analytic part of the reflection coefficient on the real line.
\end{itemize}
 The remaining problem is to characterize the long-time asymptotic behavior in the regions adjacent to these three regions, namely the transition regions, 
    \begin{itemize}
        \item The first transition region $\mathcal{T}_{\textup{\uppercase\expandafter{\romannumeral1}}}:=\left\{(x,t): \left|\xi+\frac{c_{l}^2}{2}\right|t^{\frac{2}{3}}<C, \ \ C>0\right\}$,
        \item The second transition region $\mathcal{T}_{\textup{\uppercase\expandafter{\romannumeral2}}}:=\left\{(x,t): \left|\xi+\frac{c_{r}^2}{2}\right|t^{\frac{2}{3}}<C\right\}$,
    \end{itemize}
as illustrated in Figure~\ref{fig:cone}.

\subsection{Main results}\label{sec: main results}

Besides the boundary condition \eqref{Initial data}, we assume that the initial
data $q_0(x)$ satisfies the following conditions.
\begin{Assumption}\label{assumption on q_0}
    \hfill
   \begin{itemize}
       \item [\rm(a)] $x^m(q_0-c_l)|_{\mathbb{R}^-}\in L^1(\mathbb{R}^-)$, $x^m\left(q_0-c_r\right)|_{\mathbb{R}^+}\in L^1(\mathbb{R}^+)$, $m=0,\cdots,8$.
       \item [\rm(b)] $q_0\in\mathcal{C}^4(\mathbb{R})$, $\partial_x^n q_0\in L^\infty(\mathbb{R})$, $n=0,\cdots,4$.
   \end{itemize}
\end{Assumption}

\begin{figure}[H]
    \begin{center}
    \begin{tikzpicture}[node distance=2cm]
    %\filldraw[yellow!20,line width=2] (0,0)--(6,1)--(7,0);
    %\filldraw[purple!20,line width=2] (0,0)--(-6,1)--(-7,0);
    %\draw[red, dashed](0,0)--(6,1)node[above,black]{$x=6t$};
    %\draw[yellow!20, fill=yellow!20] (0,0)--(-5,2)--(-5,0)--(0, 0);
 %   \draw[green!20, fill=blue!20] (0,0 )--(-4,4)--(-5,4)--(-5,2)--(0,0);
   % \draw[blue!20, fill=green!20] (0,0 )--(4,4)--(-4,4);
  %  \draw[green!20, fill=green!20] (0,0 )--(4,4)--(4,0)--(0,0);
    \draw[-latex](-5.5,0)--(5,0)node[right]{$x$};
    \draw[-latex](0,0)--(0,4)node[above]{$t$};

    \draw[yellow!20, fill=yellow!20] (0,0)--(-5,1.7)--(-5,2.3)--(0, 0);
     \draw[red,dashed](0,0)--(-5,2)node[above,black]{$\xi=-\frac{c_{l}^2}{2}$};
      \draw[yellow!20, fill=yellow!20] (0,0)--(-4,3.7)--(-4,4.3)--(0, 0);
    \draw[red,dashed](0,0)--(-4,4)node[above,black]{$\xi=-\frac{c_{r}^2}{2}$};
    %\draw[red, dashed](0,0)--(-1,4)node[above,black]{$\xi=-\frac{c_{r}^2}{6}$};
    \node[below]{$0$};
    \coordinate (A) at (-5.3, 1.65);
	\fill (A) node[right] {$\mathcal{T}_{\textup{\uppercase\expandafter{\romannumeral1}}}$};
    \coordinate (B) at (-4.3, 3.6);
	\fill (B) node[right] {$\mathcal{T}_{\textup{\uppercase\expandafter{\romannumeral2}}}$};

	\coordinate (C) at (-4.4, 0.65);
	\fill (C) node[right] {$\mathcal{R}_{\textup{\uppercase\expandafter{\romannumeral1}}}$};

	\coordinate (D) at (-3.5, 2);
	\fill (D) node[right] {$\mathcal{R}_{\textup{\uppercase\expandafter{\romannumeral2}}}$};

    \coordinate (E) at (-1, 2.8);
	\fill (E) node[right] {$\mathcal{R}_{\textup{\uppercase\expandafter{\romannumeral3}}}$};

    \end{tikzpicture}
    %\flushleft{\footnotesize {\bf Figure $\ref{cone}$}  }
    \caption{The different asymptotic regions of the $(x,t)$-half plane.} \label{fig:cone}
    \end{center}
\end{figure}

Our main results   in two regions  $\mathcal{T}_{\textup{\uppercase\expandafter{\romannumeral1}}}$ and $\mathcal{T}_{\textup{\uppercase\expandafter{\romannumeral2}}}$ 
are stated   as follows.
\begin{theorem}\label{thm:mainthm}
    Let $q(x,t)$ be the global solution of the Cauchy problem \eqref{equ:mkdv}--\eqref{Initial data}
    for the defocusing mKdV equation with a step-like  initial data   under Assumption \textup{\ref{assumption on q_0}},  and denote
    by $r(k)$ the reflection coefficient. As $t\rightarrow+\infty$, we have the following asymptotic expansions  for  $q(x,t)$
    in the transition regions.
    \begin{itemize}
    \item[{\rm (I)}] For $\xi\in\mathcal{T}_{\textup{\uppercase\expandafter{\romannumeral1}}}$, we have
    \begin{align}\label{asy formula:TI}
    q(x,t)=c_l+t^{-2/3}f_{\textup{\uppercase\expandafter{\romannumeral1}}}(\xi,s)+\mathcal{O}(t^{1/3-2\epsilon}),
    \end{align}
    where  $\epsilon$ is any real number with $\frac{1}{2}<\epsilon<\frac{2}{3}$ and
    \begin{align*}
        &s=-t^{\frac{2}{3}}\frac{g_{\textup{\uppercase\expandafter{\romannumeral1}}}^{(1)}(c_l)}{\left(\frac{3}{2}\right)^{\frac{1}{3}}\left|g_{\textup{\uppercase\expandafter{\romannumeral1}}}^{(2)}(c_l)\right|^{\frac{1}{3}}},\; f_\textup{\uppercase\expandafter{\romannumeral1}}(\xi,s)=-\frac{1}{2}\left(\frac{3}{2}\right)^{-\frac{2}{3}}\left|g_{\textup{\uppercase\expandafter{\romannumeral1}}}^{(2)}(c_l)\right|^{-\frac{2}{3}}\left(s^2\frac{\textup{Ai} '(s)}{\textup{Ai} (s)}-2s\right).
    \end{align*}
    Here, $\textup{Ai} (s)$ is the Airy function and
\begin{align*}
    & g_{\textup{\uppercase\expandafter{\romannumeral1}}}^{(1)}(c_l)=6(2c_l)^{\frac{1}{2}}(c_l^2+2\xi),\quad g_{\textup{\uppercase\expandafter{\romannumeral1}}}^{(2)}(c_l)=3(2c_l)^{-\frac{1}{2}}(c_l^2+2\xi)+4(2c_l)^{\frac{3}{2}}.
\end{align*}

    \item[{\rm (II)}] For $\xi\in\mathcal{T}_{\textup{\uppercase\expandafter{\romannumeral2}}}$, we have
    \begin{align}\label{asy formula:RII}
        q(x,t)=c_r+t^{-2/3}f_{\textup{\uppercase\expandafter{\romannumeral2}}}(\xi,s)+\mathcal{O}(t^{1/3-2\epsilon}),
    \end{align}
    where $\epsilon$ is any real number with $\frac{1}{2}<\epsilon<\frac{2}{3}$ and
    \begin{align*}
   &s=-t^{\frac{2}{3}}\frac{g_{\textup{\uppercase\expandafter{\romannumeral2}}}^{(1)}(c_r)}{\left(\frac{3}{2}\right)^{\frac{1}{3}}\left|g_{\textup{\uppercase\expandafter{\romannumeral2}}}^{(2)}(c_r)\right|^{\frac{1}{3}}},\\
   &f_{\textup{\uppercase\expandafter{\romannumeral2}}}(\xi,s)=2\left(\frac{3}{2}\right)^{-\frac{2}{3}}\left|g_{\textup{\uppercase\expandafter{\romannumeral2}}}^{(2)}(c_r)\right|^{-\frac{2}{3}}\left(2^{\frac{1}{3}}a(s)q(-2^{\frac{1}{3}}s)+u(s)+\frac{s}{2}\right),
    \end{align*}
    with
    \begin{align*}
         &g_{\textup{\uppercase\expandafter{\romannumeral2}}}^{(1)}(c_r)=6(2c_r)^{\frac{1}{2}}(c_r^2+2\xi),\quad g_{\textup{\uppercase\expandafter{\romannumeral2}}}^{(2)}(c_r)=3(2c_r)^{-\frac{1}{2}}(c_r^2+2\xi)+4(2c_r)^{\frac{3}{2}}.
    \end{align*}
Moreover, $q(s)$ satisfies the Painlev\'e \textup{\uppercase\expandafter{\romannumeral2}} equation
\begin{equation*}
     q''(s)=sq(s)+2q^3(s)+\frac{\arg r(c_r)}{\pi}-\frac{1}{2},
\end{equation*}
with asymptotics
\begin{equation*}
    q(s)=\begin{cases}
        -\left(\frac{\arg r(c_r)}{\pi}-\frac{1}{2}\right) s^{-1}    +\bigo{s^{-4}}, &\qquad s\to +\infty,
\\
\sqrt{-\frac{s}{2}}+\bigo{s^{-1}}, &\qquad s\to -\infty,
    \end{cases}
\end{equation*}
and $u(s)$ satisfies the Painlev\'e \textup{\uppercase\expandafter{\romannumeral34}} equation
\begin{equation*}
     u''(s)=4u(s)^2+2su(s)+\frac{u'(s)^2-\frac{\arg r(c_r)^2}{\pi^2}}{2u(s)},
\end{equation*}
with asymptotics
\begin{equation*}
u(s)=\left\{ \begin{array}{ll}
-\frac{\arg r(c_r)}{2\pi\sqrt{s}}+\bigo{s^{-2}}, &\qquad s\to +\infty,
\\
-\frac{s}{2}+\bigo{s^{-2}}, &\qquad s\to -\infty.
\end{array}\right .
\end{equation*}
\end{itemize}
\end{theorem}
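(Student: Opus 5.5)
We would prove both parts by Deift--Zhou nonlinear steepest descent applied to the RH problem for the step-like defocusing mKdV equation, as set up in \cite{xuzyd}. Its spectral curve has branch points at $\pm c_l$ and $\pm c_r$, with reflection coefficient $r(k)$. We reuse the $g$-function deformations from \cite{xuzyd} for the regions adjacent to each transition zone. For $\mathcal{T}_{\textup{\uppercase\expandafter{\romannumeral1}}}$ we take the deformation from $\mathcal{R}_{\textup{\uppercase\expandafter{\romannumeral1}}}$. For $\mathcal{T}_{\textup{\uppercase\expandafter{\romannumeral2}}}$ we take the one from $\mathcal{R}_{\textup{\uppercase\expandafter{\romannumeral2}}}$ or $\mathcal{R}_{\textup{\uppercase\expandafter{\romannumeral3}}}$.

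The key observation concerns the phase function near the branch point $k=c_l$ (respectively $c_r$). In the local uniformizing variable $\sqrt{k-c_l}$ its expansion is
\[
g_{\textup{\uppercase\expandafter{\romannumeral1}}}(k)=g^{(1)}_{\textup{\uppercase\expandafter{\romannumeral1}}}(c_l)(k-c_l)^{1/2}+\tfrac{2}{3}g^{(2)}_{\textup{\uppercase\expandafter{\romannumeral1}}}(c_l)(k-c_l)^{3/2}+\cdots,
\]
and the coefficient $g^{(1)}\propto c_l^2+2\xi$ vanishes on the critical line. So in $\mathcal{T}$ a stationary point merges with the branch point. A double scaling $\zeta=(\tfrac32)^{1/3}|g^{(2)}|^{1/3}t^{2/3}(k-c_l)$ then gives $tg\approx \tfrac23\zeta^{3/2}-s\zeta^{1/2}$, with $s$ exactly as in the theorem.

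The steps, in order, are as follows.
\begin{enumerate}
\item Conjugate by the appropriate $g$-function and $\delta$-type scalar function. The $\delta$ function handles the jump of $r$ on the relevant interval.
\item Open lenses using analytic approximations of $r$ (the $\bar\partial$ splitting of \cite{xuzyd}). Assumption \ref{assumption on q_0} gives enough regularity for this. Off small disks $U$ of radius $\sim t^{-\epsilon}$ around $\pm c_l$ (resp.\ $\pm c_r$), the jumps are exponentially close to the identity. The $\bar\partial$ remainder, together with the matching on $\partial U$, produces the $\mathcal{O}(t^{1/3-2\epsilon})$ error.
\item Build local parametrices on the disks. Near $c_l$ the reflection coefficient satisfies $|r|=1$ on the gap, and the jump is a pure square-root branch jump with no extra singular factor. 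The model problem is then the $\zeta^{3/2}-s\zeta^{1/2}$ RH problem, which is solved explicitly by Airy functions. This is the Painlev\'e XXXIV model degenerated to the $\nu=0$ Airy case. Its residue coefficient is $\tfrac{1}{2}(s^2\textup{Ai}'/\textup{Ai}-2s)$, which after unscaling gives $f_{\textup{\uppercase\expandafter{\romannumeral1}}}$. Near $c_r$ the $\delta$ function has a logarithmic singularity of strength $\nu=\arg r(c_r)/\pi-\tfrac12$. It therefore introduces a factor $(k-c_r)^{\nu}$, and the model becomes the genuine Painlev\'e XXXIV RH problem (as in Its--Kuijlaars--\"Ostensson), with parameter $\arg r(c_r)/\pi$. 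That problem is solved in terms of the PXXXIV transcendent $u$ and the related Painlev\'e II function $q(-2^{1/3}s)$, with the stated boundary asymptotics.
\item Form the small-norm error problem $E=M\cdot P^{-1}$. Expand $E$ to first order and extract $q(x,t)$ from the $k\to\infty$ (or $k\to0$) expansion via the reconstruction formula. The constant $c_l$ (resp.\ $c_r$) comes from the outer model. The $t^{-2/3}$ term comes from the $1/\zeta$ coefficient of the local parametrix, via a residue on $\partial U$. The symmetry $k\mapsto -k$ doubles that contribution, which accounts for the overall prefactors.
\end{enumerate}

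The main obstacle is step 3 for $\mathcal{T}_{\textup{\uppercase\expandafter{\romannumeral2}}}$. We must check that the matching holds uniformly for $s$ in compact sets. This requires that the PXXXIV model has no poles for real $s$, which holds since $|r(c_r)|=1$ and the parameter lies in the admissible range. We must also identify the $1/\zeta$ coefficient $\Psi_1$ of the model solution in terms of $a(s)$, $q$ and $u$. We will take this coefficient from the known Lax-pair (Jimbo--Miwa) representation rather than derive it, and then track the scaling constants $(3/2)^{1/3}$, $2^{1/3}$ and $|g^{(2)}|$ to get $f_{\textup{\uppercase\expandafter{\romannumeral2}}}$. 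A secondary issue is showing that the $\bar\partial$ contribution near a merging branch point is indeed $\mathcal{O}(t^{1/3-2\epsilon})$. For this we use the disk radius $t^{-\epsilon}$ with $\epsilon>\tfrac12$ and estimate the remainder together with the mismatch on $\partial U$.
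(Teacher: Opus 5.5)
Your architecture matches the paper's: $g$-function, splitting of $r$ and lens opening, outer parametrix $\Delta_l$ resp.\ $\Delta_r$, Painlev\'e XXXIV parametrices at $\pm c_l$ resp.\ $\pm c_r$, small-norm problem, and residues on the two disks. But the matching step fails as you set it up, because you do not account for the size of the matching factor. The prefactor $P^{(r)}=M^{(\infty)}\big(Q^{(r)}\big)^{-1}\tfrac{1}{\sqrt2}(I-i\sigma_1)\zeta_r^{\sigma_3/4}$ is analytic at $c_l$. However, $M^{(\infty)}=\Delta_l\sim(k-c_l)^{-1/4}$ and $\zeta_r^{1/4}\sim t^{1/6}(k-c_l)^{1/4}$, so its columns have sizes $t^{1/6}$ and $t^{-1/6}$. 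Hence in
\[
\widetilde M^{(r)}\big(M^{(\infty)}\big)^{-1}-I=P^{(r)}\Big(\frac{M_1^{P_{34}}(s)}{\zeta_r}+\mathcal{O}(\zeta_r^{-2})\Big)\big(P^{(r)}\big)^{-1}
\]
the entry $(M_1^{P_{34}})_{12}=ia(s)$ is amplified by $t^{1/3}$; it equals $is^2/4$ in the Airy case, so it is generically nonzero. The mismatch on $\partial U$ is therefore of size $t^{1/3}/|\zeta_r|\asymp t^{-1/3}\varrho^{-1}$. With your radius $\varrho\sim t^{-\epsilon}$, $\epsilon>\frac12$, this is $t^{\epsilon-1/3}\to\infty$, so there is no small-norm problem at all. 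The paper takes $\varrho\asymp t^{-2/3+\epsilon}$, so that $|\zeta_r|\asymp t^{\epsilon}$ on the circle and the mismatch is $\mathcal{O}(t^{1/3-\epsilon})$. Here $\epsilon<\frac23$ keeps the disk shrinking, and $\epsilon>\frac12$ is exactly what makes $t^{1/3-2\epsilon}=o(t^{-2/3})$.

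The same amplification matters in the residue computation. Each disk produces a term of order $t^{1/3}\cdot t^{-2/3}=t^{-1/3}$, proportional to $a(s)P^{(r)}(c_l)E_{12}P^{(r)}(c_l)^{-1}$ with $E_{12}$ the matrix unit. Its off-diagonal entries are nonzero and equal, so it would give a $t^{-1/3}$ correction to $q$. It drops out of $(M^{(err)}_1)_{12}$ only because of the symmetry: if $J$ is the contribution of the disk at $c_l$, the disk at $-c_l$ contributes $-\sigma_1J\sigma_1$ (from $\widetilde M^{(l)}(k)=\sigma_1\widetilde M^{(r)}(-k)\sigma_1$ and $z=-w$). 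This cancels the amplified term off the diagonal and doubles the part coming from the diagonal of $M_1^{P_{34}}$. So the $t^{-2/3}$ coefficient is proportional to $(M_1^{P_{34}})_{11}-(M_1^{P_{34}})_{22}$. That difference equals $\tfrac14\big(s^2\mathrm{Ai}'(s)/\mathrm{Ai}(s)-2s\big)$ for $b=0$ and $-\big(2^{1/3}a(s)q(-2^{1/3}s)+u(s)+\tfrac s2\big)$ in general. Your statement that ``the symmetry doubles the contribution'' describes only half of what happens.

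Second, your mechanism for the parameter in $\mathcal{T}_{\mathrm{II}}$ is wrong. The paper uses no $\delta$-function there. With $g_{\mathrm{II}}$ the jump on $(c_r,c_l)$ factorizes directly, and the lens jumps near $c_r$ carry the unimodular constant $r(c_r)=e^{i\arg r(c_r)}$. Meanwhile the jump on $(-c_r,c_r)$ is $\begin{pmatrix}0&-1\\1&0\end{pmatrix}$ with no $r$ in it. This mismatch gives a nontrivial local monodromy at $c_r$, matched by $M^{P_{34}}(\zeta;b,0,s)$ with $b=-\arg r(c_r)/(2\pi)$. The Painlev\'e II parameter is then $\nu=2b+\frac12$ and the Painlev\'e XXXIV constant is $(2b)^2$, as in the statement.

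In $\mathcal{T}_{\mathrm{I}}$, by contrast, $D_{\mathrm I}$ is bounded at $c_l$ because the factor $X_l$ kills the logarithm. It normalizes $D_{\mathrm I}^{-2}r_a\to1$, which is why $b=0$ there. A $\delta$ with a power singularity at $c_r$ could shift the monodromy into a prefactor, but its exponent would be $-\arg r(c_r)/(2\pi)$ (compare $D_{\mathrm I}$ at $\pm c_r$). Your exponent $\arg r(c_r)/\pi-\frac12$ is minus the Painlev\'e II parameter. Used as the model's exponent, it produces the wrong Painlev\'e II equation. For the same reason the Airy case is $b=0$, $\nu=\frac12$, not $\nu=0$.

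Smaller slips: with the statement's $g^{(2)}$, the $(k-c_l)^{3/2}$ coefficient is $g^{(2)}$, not $\frac23g^{(2)}$. The scaling is $\zeta_r\approx-(\frac32)^{2/3}|g^{(2)}|^{2/3}t^{2/3}(k-c_l)$.
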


%We clarify that no soliton structures emerge under the considered step-like initial data satisfying Assumption \ref{assumption on q_0}; see part (b) of Proposition \ref{prop:a,b,r} below.

Establishing transition asymptotics for integrable systems is usually a challenging task, which typically involve the nonlinear special functions -- Painlev\'{e} transcendents and exhibit some universal features. For instance, one encounters the Painlev\'e II transcendents and its  higher order analogues in \cite{Charlier2020,CG10,DZAnn,HM1981,hz,mis,wang2023defocusing,xu2024transient}, the Painlev\'e I transcendents and its  higher order analogues in \cite{BT,CG2009}, and a model RH problem associated with the Painlev\'{e} IV equation in \cite{MLS2025}.
In the present work, we shows that the analysis in both transition regions involves RH problems relevant to the Painlev\'e XXXIV transcendents, which is different from the classical local parametrices used in the analysis in the other regions \cite{xuzyd}.
 To be precise, the Painlev\'e \textup{\uppercase\expandafter{\romannumeral34}} equation, which reads as
\begin{equation*}
     u''(s)=4u(s)^2+2su(s)+\frac{u'(s)^2-(2b)^2}{2u(s)},
\end{equation*}
depends on a parameter $b$. This equation can be obtained from the well-known Painlev\'e \textup{\uppercase\expandafter{\romannumeral2}} equation
\begin{equation*}
     q''(s)=sq(s)+2q^3-\nu,\quad \nu=2b+\frac{1}{2},
\end{equation*}
and its Hamiltonian \cite{ikj2008, FN1980}.
 It is worthwhile to point out that the transition asymptotics of the focusing mKdV equation for step-like initial data
 $$q_0(x) \to \begin{cases}
     0, \quad \text{as}\; x\to +\infty,\\
     c, \quad \text{as}\; x \to -\infty, \;c>0,
 \end{cases} $$
 is related to the RH problem built from Laguerre polynomials \cite{BM2018}, and the Painlev\'e \uppercase\expandafter{\romannumeral34\relax} transcendents play an important role in asymptotic studies of critical behaviors arsing from integrable differential equations \cite{FLYZ2006, algas}, random unitary ensembles \cite{ikj2008}  and orthogonal polynomials \cite{XZ2011}.
However, the results in the two transition regions appear to be different; see Theorem \ref{thm:mainthm}. In fact, both results are derived from the same   Painlev\'e \uppercase\expandafter{\romannumeral34\relax} model, which is used (as shown in Appendix \ref{p34}) to construct the local RH problem within the nonlinear steepest descent analysis of the original mKdV RH problem. The apparent difference arises from the choice of coefficients; see Appendix \ref{p34} for further details.

\subsection{Arrangement and notations}\label{sec: main results}

The rest of this paper is organized as follows.
In Section \ref{sec:IST}, we recall the spectral analysis and inverse scattering transform, of which the details can be found in \cite{xuzyd}.
Sections \ref{sec:asymptotic analysis in RI} and \ref{sec:asymptotic analysis in RII} are devoted to the asymptotic analysis of the obtained
RH problem in different transition regions, from which the main results stated in Theorem \ref{thm:mainthm} above are proved.

Throughout this paper, the following notations will be used.
\begin{itemize}
    \item As usual, the classical Pauli matrices $\{\sigma_j\}_{j=1,2,3}$ are defined by
	\begin{equation*}
		\sigma_1:=\begin{pmatrix}0 & 1 \\ 1 & 0\end{pmatrix}, \quad
		\sigma_2:=\begin{pmatrix}0 & - i \\ i & 0\end{pmatrix}, \quad
		\sigma_3:=\begin{pmatrix}1 & 0 \\ 0 & -1\end{pmatrix}.
	\end{equation*}
    For a $2\times 2$ matrix $A$, we also define
	$$e^{\hat{\sigma}_j}A:=e^{\sigma_j}Ae^{-\sigma_j}, \quad j=1,2,3.$$
	\item For a complex-valued function $f(z)$, we use $f^{*}(z):=\overline{f(\bar{z})}$ for $z\in\mathbb{C}$ to denote its Schwartz conjugation.
    \item For a region $U\subseteq \mathbb{C}$, we use $U^*$ to denote the conjugated region of $U$. We also set
	\begin{equation*}
\mathbb{C}^{\pm}:=\left\{z
		\in\mathbb{C}: \pm \text{Im}\ z>0 \right\},\qquad \mathbb{R}^{\pm}:=\left\{z
		\in\mathbb{R}: \pm  z>0 \right\}.
	\end{equation*}
    \item For $1 \leqslant p < \infty$, the $L^p$-space is defined as:
                \begin{equation*}
                L^p(\mathbb{R}) = \left\{ f: \mathbb{R} \to \mathbb{C} \ \middle| \ f \text{ is measurable and } \|f\|_p < \infty \right\},
                \end{equation*}
                where the $L^p$-norm is given by $\|f\|_p = \left( \int_{\mathbb{R}} |f(x)|^p  dx \right)^{1/p}$.
                For $p = \infty$, the $L^\infty$-space is defined by
                \begin{equation*}
                L^\infty(\mathbb{R}) = \left\{ f: \mathbb{R} \to \mathbb{C} \ \middle| \ f \text{ is measurable and } \|f\|_\infty < \infty \right\},
                \end{equation*}
                where the essential supremum norm is given by $\|f\|_\infty = \inf \left\{ M \geqslant 0 \ | \ |f(x)| \leqslant M \text{ a.e.} \right\}.$
                The $\mathcal{C}^{N}(\Omega)$, $N=1,2,\cdots,\infty$ is defined as the space of $N$-times continuously differentiable functions on $\Omega$.
    \item
For any smooth oriented curve $\Sigma$, the Cauchy operator $\mathcal{C}$ on $\Sigma$ is defined  by
	\begin{align*}
		\mathcal{C}f(z)=\frac{1}{2\pi i}\int_{\Sigma}\frac{f(y)}{y-z}\dif y, \qquad  z\in\mathbb{C}\setminus \Sigma.
	\end{align*}
	Given a function $f \in L^p(\Sigma)$, $1\leqslant p<\infty$,
	\begin{align*}
		\mathcal{C}_\pm f(z):=\lim_{\substack{z'\to z\in\Sigma\\ z'\in\pm\text{ side of } \Sigma}}\frac{1}{2\pi i}\int_{\Sigma}\frac{f(y)}{y-z'}\dif y
	\end{align*}
   stands for the positive/negative (according to the orientation of $\Sigma$) non-tangential boundary value of $\mathcal{C}f$.

 %   For all smooth oriented curve $\Sigma$, the Cauchy operator $C$ on $\Sigma$ is defined  by
%	\begin{align*}
%		Cf(z)=\frac{1}{2\pi i}\int_{\Sigma}\frac{f(\zeta)}{\zeta-z}\dif\zeta, \qquad  z\in\mathbb{C}\setminus \Sigma.
%	\end{align*}
%	Given a function $f \in L^p(\Sigma)$, $1\leqslant p<\infty$,
%	\begin{align*}
%		C_\pm f(z):=\lim_{z'\to z\in\Sigma}\frac{1}{2\pi i}\int_{\Sigma}\frac{f(\zeta)}{\zeta-z'}\dif\zeta
%	\end{align*}
%   stands for the positive / negative (according to the orientation of $\Sigma$) non-tangential boundary value of $Cf$.
%   It is also used to adopting $f_{\pm}(z)$ to represent the non-tangential limits from the positive / negative side respectively,
%   i.e., $f_{\pm}(z)=\lim_{{\rm positive/negative \ side}\ni \zeta\to z}f(\zeta)$.

   \item If $A$ is a matrix, then $(A)_{ij}$ stands for its $(i,j)$-th entry, and $[A]_j$ represents the $j$-th column.
   %We use $A^{\rm H}$ to denote its conjugate transpose, which means $A^{\rm H}=\bar A^{\rm T}$.
%   \item We use $\mathbb{C}^+:=\{k| \im k>0\}$ and $\mathbb{C}^-:=\{k| \im k<0\}$ to represent the  upper and lower half plane respectively. Similarly, we introduce $\mathbb{R}^+:=(0,+\infty)$ and $\mathbb{R}^-:=(-\infty,0)$.
   \item
   We use the notation $a\lesssim b$ (resp.\ $a \gtrsim b$) to indicate that $a\leqslant Cb$ (resp.\ $a\geqslant Cb$) for some generic positive constant $C$.
   \item   In Sections \ref{sec:asymptotic analysis in RI} and \ref{sec:asymptotic analysis in RII}, we adopt the same notations (such as  $M^{(\infty)}$, $M^{(err)}$, $P^{(r)}$, \dots  ) during the analysis,  which should be understood in different contexts. We believe this will not lead to any confusion.

\end{itemize}

\section{Spectral analysis and inverse scattering transform}\label{sec:IST}

In this section, we state some main results on the inverse scattering transform associated with the Cauchy problem \eqref{equ:mkdv}--\eqref{Initial data}. The details can be found in \cite{xuzyd}.
\subsection{Direct scattering transform}
As the member of mKdV hierarchy \cite{AKNS}, the Lax pair of
the defocusing mKdV equation \eqref{equ:mkdv} is given by
\begin{align}\label{equ:lax pair}
    \left\{
        \begin{aligned}
        &\Phi_x+ik\sigma_3\Phi=Q\Phi, \\
        &\Phi_t+4ik^3\sigma_3\Phi=V\Phi,
        \end{aligned}
    \right.
\end{align}
where $\Phi=\Phi(k;x,t)$ is a $2\times 2$ matrix-valued function with
the spectral parameter $k\in\mathbb{C}$. Here, $Q$ and $V$
are some matrices associated with the potential function $q(x,t)$, defined by
\begin{align*}
    &Q=Q(x,t)=\begin{pmatrix}0 & q(x,t) \\ q(x,t) & 0\end{pmatrix}, \label{equ:Q(x,t)}\\
    &V=V(x,t)=4k^2Q(x,t)+2ik\sigma_3\left(Q_x\left(x,t\right)-Q^2\left(x,t\right)\right)+2Q^3(x,t)-Q_{xx}(x,t).
\end{align*}
%The compatible condition $\Phi_{xt} = \Phi_{tx}$ of the Lax pair \eqref{equ:lax pair} is equivalent to the mKdV equation \eqref{equ:mkdv}.

For $j\in\{l,r\}$, substituting $q(x,t)=c_{j}$ into \eqref{equ:lax pair}, we obtain
the corresponding ``background'' Lax pair
\begin{equation}\label{laxpair:background}
     \left\{
        \begin{aligned}   &\Phi^b_{j,x}+ik\sigma_3\Phi^b_{j}=Q^b_j\Phi^b_{j}, \\
        &\Phi^b_{j,t}+4ik^3\sigma_3\Phi^b_{j}=V^b_j\Phi^b_{j},
        \end{aligned}
    \right.
\end{equation}
    where
    \begin{equation}\label{equ:Q_j^b,V_j^b}
        Q^b_j=\begin{pmatrix}0 & c_{j} \\ c_j & 0\end{pmatrix},\quad V^b_j=\begin{pmatrix}
            -2ic_j^2k& 4c_j k^2+2c_j^3\\4c_j k^2+2c_j^3&2ic_j^2k
        \end{pmatrix}.
    \end{equation}

The explicit solutions to the ``background'' Lax pair \eqref{laxpair:background} are given by
\begin{equation}\label{equ:Phi_j^p}
    \Phi_{j}^{b}(k;x,t)=\Delta_{j}(k)e^{-i(X_{j}(k)x+\Omega_{j}(k)t)\sigma_3},\quad j\in\{l,r\},
\end{equation}
where
\begin{align}\label{def:X_j(k)}
    X_{j}(k)=\sqrt{k^2-c_{j}^2}, \quad \Omega_{j}(k)=2(2k^2+c_{j}^2)X_{j}(k),\quad j\in\{l,r\},
\end{align}
and
\begin{equation}\label{equ:Delta_j}
    \Delta_{j}(k)=\frac{1}{2}
    \left(
        \begin{array}{cc}
        \chi_{j}(k)+\chi_{j}^{-1}(k) & i\left(\chi_{j}(k)-\chi_{j}^{-1}(k)\right)\\
        -i\left(\chi_{j}(k)-\chi_{j}^{-1}(k)\right) & \chi_{j}(k)+\chi_{j}^{-1}(k)
        \end{array}
    \right), \quad j\in\{l,r\}.
\end{equation}
Here, the function $\chi_j$ is defined by
\begin{equation*}
    \chi_{j}: \mathbb{C}\backslash[-c_{j},c_{j}]\rightarrow\mathbb{C}, \quad \chi_{j}(k)=\left(\frac{k-c_{j}}{k+c_{j}}\right)^{\frac{1}{4}}, \quad j\in\{l,r\},
\end{equation*}
with the branch cut being chosen such that $\chi_j(k)=1+\mathcal{O}(k^{-1})$ as $k\rightarrow\infty$.

For $j\in\{l,r\}$, we denote $\Phi^{b}_j(k;x):=\Phi^{b}_{j}(k;x,0)$ and  $Q_0(x):=Q(x,0)$. To proceed, we consider the Lax pair \eqref{equ:lax pair}
for $t=0$ and define the Jost functions $\Phi_j(k;x):=\Phi_j(k;x,0)$, which satisfy the $x$-part of \eqref{equ:lax pair} and admit the following asymptotic conditions
\begin{align*}
    &\Phi_{l}(k;x)=\Phi_{l}^{b}(k;x)\left(I+o(1)\right), \quad k\in\mathbb{R}\setminus\{-c_l,c_l\},\ x\rightarrow-\infty, \\
    &\Phi_{r}(k;x)=\Phi_{r}^{b}(k;x)\left(I+o(1)\right), \quad  k\in\mathbb{R}\setminus\{-c_r,c_r\},\ x\rightarrow+\infty.
\end{align*}
By Lax pairs \eqref{equ:lax pair} and \eqref{laxpair:background}, for $j\in\{l,r\}$, the Jost functions $\Phi_j(k;x)$ are defined via the Volterra integral equations
\begin{align}\label{equ:VolterraforPhi}
    \Phi_{j}(k;x)=\Phi^{b}_{j}(k;x)+\int_{\infty_j}^{x}\Phi^b_j(k;x)(\Phi^b_j)^{-1}(k;y)\left(Q_0(y)-Q^b_j\right)\Phi_{j}(k;y)\dif y,
\end{align}
where $\infty_r:=+\infty$, $\infty_l:=-\infty$ and $Q^b_j$ is defined in \eqref{equ:Q_j^b,V_j^b}.

Some basic properties of the Jost functions $\Phi_{l}$ and $\Phi_{r}$ are summarized in the following proposition,
which is listed here for latter use \cite{xuzyd}.
\begin{proposition}\label{prop: properties of Phi(r,j)}
    Under the Assumption \textup{\ref{assumption on q_0}} on the initial data, the Jost functions $\Phi_{l}$ and $\Phi_{r}$ defined by \eqref{equ:VolterraforPhi} have the following properties for $j\in\{l,r\}$:
    \begin{itemize}
        \item[\rm (a)] For each $x\in\mathbb{R}$, we have\\
        $[\Phi_r (k;x)]_1$ is holomorphic for $k\in\mathbb{C}^{-}$ and has continuous extension to $\overline{\mathbb{C}^{-}}\backslash\{-c_{r}, c_{r}\}$,\\
        $[\Phi_r(k;x)]_2$ is holomorphic for $k\in\mathbb{C}^{+}$ and has continuous extension to $\overline{\mathbb{C}^{+}}\backslash\{-c_{r}, c_{r}\}$,\\
        $[\Phi_l(k;x)]_1$ is holomorphic for $k\in\mathbb{C}^{+}$ and has continuous extension to $\overline{\mathbb{C}^{+}}\backslash\{-c_{l}, c_{l}\}$,\\
        $[\Phi_l(k;x)]_2$ is holomorphic for $k\in\mathbb{C}^{-}$ and has continuous extension to $\overline{\mathbb{C}^{-}}\backslash\{-c_{l}, c_{l}\}$.
        \item[\rm(b)] $\det\Phi_j(k;x)=1$, $k\in\mathbb{R}\setminus[-c_j,c_j]$.
        \item[\rm(c)] As $k\rightarrow\infty$, we have
        \begin{align*}
                \left([\Phi_l(k;x)]_1, [\Phi_r(k;x)]_2\right)e^{ikx\sigma_3}=I+\mathcal{O}(k^{-1}), \quad k\in \mathbb{C}^{+},\\
                \left([\Phi_r(k;x)]_1, [\Phi_l(k;x)]_2\right)e^{ikx\sigma_3}=I+\mathcal{O}(k^{-1}), \quad k\in \mathbb{C}^{-}.
        \end{align*}
        \item[\rm(d)] $\Phi_j(k;x)$ admits the following symmetries
        \begin{equation*}
            \sigma_1\overline{\Phi_{j}(\overline{k};x)}\sigma_1=\Phi_{j}(k;x), \quad
            [\Phi_j(k;x)]_1=\sigma_1[\Phi_j(-k;x)]_2,\quad
            \overline{\Phi_j(-\bar{k};x)}=\Phi_j(k;x).
        \end{equation*}
        \item[\rm(e)]For $k\in (-c_j, c_j)$, we have
        \begin{equation*}
           [\Phi_j(k;x)]_1 = [\Phi_j(k;x)]_2.
        %\quad[\Phi_r]_1 (k;x)=- [\Phi_r]_2 (k;x),
        \end{equation*}
        %\item[(f)] As $k\rightarrow\pm c_j$, we have $\Phi_j(k;x)=\mathcal{O}((k\mp c_j)^{-\frac{1}{4}})$.
    \end{itemize}
\end{proposition}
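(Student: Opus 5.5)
The plan is to derive every item from the Volterra equations \eqref{equ:VolterraforPhi} by way of the normalized functions $\Psi_j(k;x):=\Phi_j(k;x)e^{iX_j(k)x\sigma_3}$. Using \eqref{equ:Phi_j^p}, these satisfy
\begin{equation*}
\Psi_j(k;x)=\Delta_j(k)+\int_{\infty_j}^{x}\Delta_j(k)e^{-iX_j(k)(x-y)\sigma_3}\Delta_j^{-1}(k)\left(Q_0(y)-Q^b_j\right)\Psi_j(k;y)e^{iX_j(k)(x-y)\sigma_3}\dif y .
\end{equation*}
Column by column, the first column of the integrand carries the factors $1$ and $e^{2iX_j(k)(x-y)}$, and the second column carries $1$ and $e^{-2iX_j(k)(x-y)}$. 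Because of the branch cut $[-c_j,c_j]$ with $X_j(k)\sim k$ at infinity, $\operatorname{Im}X_j$ and $\operatorname{Im}k$ have the same sign off the cut.

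For $j=r$ we integrate over $y>x$, so $e^{2iX_r(x-y)}$ is bounded exactly when $\operatorname{Im}X_r\le 0$, that is, for $k\in\overline{\mathbb{C}^-}$. Thus $[\Psi_r]_1$ solves a Volterra equation with bounded kernel there, and likewise $[\Psi_r]_2$ in $\overline{\mathbb{C}^+}$. The case $j=l$ is the mirror image. The Neumann series converges uniformly on compacts away from $\pm c_j$, because $(1+|y|)^m(q_0-c_j)\in L^1$ on the half-line $\infty_j$ (Assumption~\ref{assumption on q_0}(a)). On the finite remaining piece between $0$ and $x$, we only need boundedness of $q_0$ from Assumption~\ref{assumption on q_0}(b).

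Holomorphy of each term is then inherited by the uniform limit, which gives (a). The exclusion of $\pm c_j$ comes only from $\Delta_j,\Delta_j^{-1}=\mathcal{O}((k\mp c_j)^{-1/4})$. Continuity up to $\mathbb{R}\setminus\{\pm c_j\}$ follows by dominated convergence.

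For (b), the $x$-equation has traceless coefficient, so $\det\Phi_j$ is independent of $x$. Letting $x\to\infty_j$ gives $\det\Phi_j=\det\Delta_j=\frac14\big[(\chi_j+\chi_j^{-1})^2-(\chi_j-\chi_j^{-1})^2\big]=1$.

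For (c), we use $\chi_j=1+\mathcal{O}(k^{-1})$, hence $\Delta_j=I+\mathcal{O}(k^{-1})$, and $X_j=k+\mathcal{O}(k^{-1})$, so that $e^{-iX_jx}e^{ikx}=1+\mathcal{O}(k^{-1})$ for fixed $x$. It then remains to show that the integral term is $\mathcal{O}(k^{-1})$. The diagonal contribution is handled by the standard substitution $\Psi=I+\mathcal{O}(k^{-1})$ together with the off-diagonal structure of $Q_0-Q_j^b$. The oscillatory contribution $\int e^{\pm2iX_j(x-y)}(\cdot)\dif y$ is handled by one integration by parts, using $q_0\in\mathcal{C}^1$ with bounded derivative. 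I expect this uniform large-$k$ estimate, together with the control near the branch points, to be the only genuinely technical point.

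For (d), the strategy is uniqueness of Volterra solutions. For each symmetry I would check that the background data share it: $Q_0$ and $Q_j^b$ are real and commute with $\sigma_1$; $\sigma_1\overline{\Delta_j(\bar k)}\sigma_1=\Delta_j(k)$; $X_j(-k)=-X_j(k)$ and $\overline{X_j(\bar k)}=X_j(k)$ with the chosen branch; and $\chi_j(-k)=\chi_j(k)^{-1}$ up to the branch convention. Given these, the transformed function satisfies the same integral equation, so it coincides with $\Phi_j$.

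For (e), on $(-c_j,c_j)$ the boundary values satisfy $X_{j,+}=-X_{j,-}$ and $\chi_{j,+}=\pm i\chi_{j,-}$. Substituting these into $\Delta_j e^{-iX_jx\sigma_3}$ shows that the two columns of the background solution coincide. The kernel relations then transfer this identity to the Volterra solutions, which yields $[\Phi_j]_1=[\Phi_j]_2$ there.
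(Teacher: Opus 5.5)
Your overall route is the standard Volterra argument; the paper gives no proof of its own and cites \cite{xuzyd}. That route is: normalize by $e^{iX_jx\sigma_3}$; solve each column by a Neumann series on the closed half-plane where its exponential $e^{\pm2iX_j(x-y)}$ is bounded; get (b) from $\operatorname{tr}(-ik\sigma_3+Q)=0$ and $\det\Delta_j=1$; and get (d) from uniqueness after checking the symmetries of $X_j$, $\chi_j$, $\Delta_j$ and of the kernel $\Phi^b_j(x)\Phi^b_j(y)^{-1}=e^{(x-y)(-ik\sigma_3+Q^b_j)}$. Items (a), (b), (d) go through as you describe. Note that $\chi_j(-k)=\chi_j(k)^{-1}$ holds exactly, with no branch ambiguity.

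\textbf{The gap is in (e).} You assert, without computing, that the two background columns coincide. Look at which boundary values are involved. By (a), $[\Phi_l]_1$ is the boundary value from $\mathbb{C}^+$ and $[\Phi_l]_2$ the one from $\mathbb{C}^-$; for $\Phi_r$ the roles are reversed. With the branch fixed in the paper, $X_{j,+}=-X_{j,-}$ and $\chi_{j,+}=i\chi_{j,-}$ on $(-c_j,c_j)$, and \eqref{equ:Delta_j} gives
\[
[\Delta_l]_2(\chi_{l,-})=[\Delta_l]_1(\chi_{l,+}),\qquad [\Delta_r]_2(\chi_{r,+})=-[\Delta_r]_1(\chi_{r,-}).
\]
The kernel is entire in $k$, and both boundary values lie in the same growth class, so Volterra uniqueness transfers these identities. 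The result is $[\Phi_l]_1=[\Phi_l]_2$ on $(-c_l,c_l)$, but $[\Phi_r]_1=-[\Phi_r]_2$ on $(-c_r,c_r)$.

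Whichever sign one takes in your $\chi_{j,+}=\pm i\chi_{j,-}$, the cases $j=l$ and $j=r$ come out with opposite signs, because the sides are swapped. So (e) cannot hold with a plus sign for both $j$.

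The minus sign for $j=r$ is what the rest of the paper uses. The second column of $M_+=M_-V$ on $(-c_r,c_r)$ in RH problem \ref{RHP:basic RHP} reads exactly $[\Phi_r]_{2,+}=-[\Phi_r]_{1,-}$. With a plus sign, symmetry (d) would force $a_+=-a^*_-$ there, contradicting Proposition \ref{prop:a,b,r}(d). So item (e) as printed is correct only for $j=l$. Your argument, once the computation is done, proves the corrected identity for $j=r$, and you should state that rather than claim equality.

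\textbf{A smaller point in (c).} One integration by parts turns $\int e^{\pm2iX_j(x-y)}(q_0(y)-c_j)\,dy$ into an $\mathcal{O}(k^{-1})$ boundary term plus $(2iX_j)^{-1}\int e^{\pm2iX_j(x-y)}q_0'(y)\,dy$ over a half-line. That last integral stays bounded uniformly as $\operatorname{Im}k\to0$ only if $q_0'\in L^1$ on the half-line; a bounded derivative is not enough.

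This does follow from Assumption \ref{assumption on q_0}, but you need to say so. On unit intervals $I_n$, using $q_0''\in L^\infty$, one has $\|q_0'\|_{L^\infty(I_n)}\lesssim\|q_0-c_j\|_{L^1(\tilde I_n)}^{1/3}+\|q_0-c_j\|_{L^1(\tilde I_n)}$, where $\tilde I_n$ is a slightly enlarged interval. Hölder's inequality against the weight $|x|^8$ from part (a) then makes the sum over $n$ finite.
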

%\begin{remark}
%    When $k$ lies on the branch cut $(-c_l, c_l)$, $[\Phi_l]_1(k;x)$ denotes the boundary value of $[\Phi_l]_1(x; \tilde{k})$ as $\tilde{k}$ approaches $k$ from $\mathbb{C}^+$, while $[\Phi_l]_2(k;x)$ denotes the boundary value as $\tilde{k}$ approaches $k$ from $\mathbb{C}^-$. The definition for $\Phi_r(k;x)$, $k\in(-c_r,c_r)$
 %   is analogous.
%\end{remark}

Since the matrices $\Phi_{l}(k;x)$ and $\Phi_{r}(k;x)$ are both solutions to the $x$-part of the Lax pair \eqref{equ:lax pair} for $k\in\mathbb{R}\setminus\{\pm c_l,\pm c_r\}$ and $(x,t)\in\mathbb{R}\times[0,+\infty)$, they must be linearly dependent.
Consequently, there exists a scattering matrix $S(k)$, independent of $x$, such that
\begin{equation*}
    S(k)=\Phi_{r}^{-1}(k;x)\Phi_{l}(k;x),\quad x\in\mathbb{R},\;k\in\mathbb{R}\setminus\{\pm c_l,\pm c_r\}.\end{equation*}
Due to the symmetries of the $x$-part in \eqref{equ:lax pair}, the scattering matrix $S(k)$
admits the following structure
\begin{equation*}
    S(k)=\begin{pmatrix}a(k) & b^*(k) \\ b(k) & a^*(k) \end{pmatrix},\quad k\in\mathbb{R}\setminus\{\pm c_l,\pm c_r\},
\end{equation*}
where the scattering coefficients $a(k)$ and $b(k)$ are given by
\begin{align*}
    &a(k)=\det\left([\Phi_{l}(k;x)]_1, [\Phi_{r}(k;x)]_2\right), \\
    &b(k)=\det\left([\Phi_{r}(k;x)]_1, [\Phi_{l}(k;x)]_1\right).
\end{align*}
To proceed, we define the reflection coefficient
\begin{equation*}
    r(k):=\frac{b(k)}{a(k)},\quad k\in\mathbb{R}\setminus\{\pm c_l,\pm c_r\}.
\end{equation*}

It can be seen that the spectral functions $a(k)$, $b(k)$ and the reflection coefficient $r(k)$ admit the following properties \cite{xuzyd}.
\begin{proposition}\label{prop:a,b,r}
    Spectral functions $a(k)$, $b(k)$ and reflection coefficient $r(k)$ satisfy the following properties:
\begin{itemize}
    \item[\rm (a)] Spectral function $a(k)$ is holomorphic for $k\in\mathbb{C}^{+}$, and it could be continuously
    extended up to the boundary $\mathbb{R}\setminus\{\pm c_l, \pm c_r\}$. As $k\to \pm c_j,$ for $j\in\{l,r\}$,
    we have $a(k)=\mathcal{O}((k\mp c_j)^{-1/4})$. For $k\in\mathbb{C}^+$, as $k\to\infty$,
    we have $a(k)=1+\mathcal{O}(k^{-1})$. The spectral function $b(k)$ is defined continuously for $k\in\mathbb{R}\setminus\{\pm c_l, \pm c_r\}$.
    In particular, $a(k)$ and $b(k)$ belong to $\mathcal{C}^{8}(\mathbb{R}\setminus\{\pm c_l, \pm c_r\})$.
    \item[\rm (b)] $a(k)$ has no zeros on the complex plane $\mathbb{C}$.
    \item[\rm (c)] In their domains of definition, we have
        \begin{align}\label{symmetry a,b,r}
            a(k)=\overline{a(-\bar{k})}, \quad b(k)=\overline{b(-\bar{k})}, \quad  r(k)=\overline{r(-\bar{k})}.
        \end{align}
    \item[\rm (d)] The functions $a(k)$, $b(k)$, and $r(k)$ obey the following jump relations:
    \begin{align*}
        &a_{+}(k)=a^*_{-}(k), \; b_{+}(k)=-b^*_{-}(k),\; r_{+}(k)=-r^*_{-}(k), && k\in (-c_{r}, c_{r}), \\
        &a_{+}(k)=b_{-}^{*}(k), \; b_{+}(k)=a^*_{-}(k), \; r_{+}(k)=r_{-}^{*}(k)^{-1},  && k\in (-c_l, -c_{r})\cup(c_{r}, c_l), \\
        &a_{+}(k)=a_{-}(k), \; b_{+}(k)=b_{-}(k), \; r_{+}(k)=r_{-}(k),&& k\in (-\infty,-c_l)\cup(c_l,+\infty).
    \end{align*}
    It can be readily seen that $\vert r(k) \vert=1$ for $k\in(-c_l, -c_{r})\cup(c_{r}, c_l)$
    and $|r(k)|<1$ for $k$ belongs to the other intervals on $\mathbb{R}$.
    \item[\rm (e)] The reflection coefficient $r(k)\in\mathcal{C}^{8}(\mathbb{R}\setminus\{\pm c_l,\pm c_r\})$ has the following asymptotics near the branch cut points:
    \begin{equation}\label{expansionrk}
        r(k)=\begin{cases}
            \sum_{m=0}^{7} b_{l, m}\left(k-c_l\right)^{m / 2}+o(\left(k-c_l\right)^{\frac{7}{2}}), &  k \searrow c_l, \\
            \sum_{m=0}^{7} i^m b_{l,m}\left(c_l-k\right)^{m / 2}+o(\left(c_l-k\right)^{\frac{7}{2}}), & k \nearrow c_l, \\
            \sum_{m=0}^{7} i^m b_{r,m}\left(k-c_r\right)^{m / 2}+o(\left(k-c_r\right)^{\frac{7}{2}}), &  k \searrow c_r, \\
            \sum_{m=0}^{7}(-1)^m b_{r,m}\left(c_r-k\right)^{m / 2}+o(\left(c_r-k\right)^{\frac{7}{2}}), &  k \nearrow c_r,\\
            \sum_{m=0}^{7} (-i)^m\overline{b_{ l,m}}\left(k+c_l\right)^{m / 2}+o(\left(k+c_l\right)^{\frac{7}{2}}), & k \searrow -c_l, \\
            \sum_{m=0}^{7}  \overline{b_{l,m}}\left(-c_l-k\right)^{m / 2}+o(\left(-c_l-k\right)^{\frac{7}{2}}) ,& k \nearrow -c_l, \\
            \sum_{m=0}^{7} (-1)^m \overline{b_{r,m}}\left(k+c_r\right)^{m / 2}+o(\left(k+c_l\right)^{\frac{7}{2}}), & k \searrow -c_r, \\
            \sum_{m=0}^{7} (-i)^m \overline{b_{r,m}}\left(-c_r-k\right)^{m / 2}+o(\left(-c_r-k\right)^{\frac{7}{2}}), &  k \nearrow -c_r,
            \end{cases}
    \end{equation}
    where the coefficients $b_{j,m} \in \mathbb{C}$ satisfies
\begin{equation*}
    \left|b_{j, 0}\right|=1, \quad b_{j,1} \neq 0, \quad \sum_{m=0}^n i^{n-m}(-i)^m b_{j,n-m}\overline{b_{j,m}}=0, \quad j\in\{l,r\}, \;n=1, \cdots, 7.
\end{equation*}
Moreover, $r(k)$ has the following asymptotics as $k\to\pm\infty$:
\begin{equation*}
    \partial_k^mr(k)=\mathcal{O}(k^{-5}),\quad m=0,\cdots,8.
\end{equation*}
\end{itemize}
\end{proposition}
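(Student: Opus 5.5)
Proposition \ref{prop:a,b,r} is quoted from \cite{xuzyd}; I would re-derive it from the Volterra equations \eqref{equ:VolterraforPhi} and the column structure in Proposition \ref{prop: properties of Phi(r,j)}. For (a), I start from the Wronskian form $a(k)=\det([\Phi_l]_1,[\Phi_r]_2)$. Holomorphy in $\mathbb{C}^+$ and continuity up to $\mathbb{R}\setminus\{\pm c_l,\pm c_r\}$ follow from part (a) of Proposition \ref{prop: properties of Phi(r,j)}. The limit $a=1+\mathcal{O}(k^{-1})$ follows from part (c), evaluated at any fixed $x$. The $(k\mp c_j)^{-1/4}$ singularities come from $\Delta_j$, since $\chi_j^{\pm1}$ blow up like $(k\mp c_j)^{\mp1/4}$, together with a uniform bound on the Volterra kernel near the branch points. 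That uniform bound is where the moments $x^m(q_0-c_j)\in L^1$ in Assumption \ref{assumption on q_0}(a) enter: they give $\mathcal{C}^8$ smoothness in $k$ by differentiating under the integral up to eight times. The decay $\partial_k^m r=\mathcal{O}(k^{-5})$ I would obtain by integrating by parts in the Volterra representation of $b$, using the $\mathcal{C}^4$ bounds in Assumption \ref{assumption on q_0}(b).

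Part (c) follows directly from the symmetry $\overline{\Phi_j(-\bar k)}=\Phi_j(k)$ in Proposition \ref{prop: properties of Phi(r,j)}(d), applied to both determinants. For (d), I compare the boundary values of $X_j$ and $\chi_j$ across $(-c_j,c_j)$, where $X_{j+}=-X_{j-}$ and $\chi_{j+}=\pm i\chi_{j-}$. Combining this with part (e) of Proposition \ref{prop: properties of Phi(r,j)} ($[\Phi_j]_1=[\Phi_j]_2$ on the cut) and the $\sigma_1$-symmetry, I express $[\Phi_j]_{1,\pm}$ in terms of the conjugate columns.

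\begin{itemize}
\item On $(-c_r,c_r)$ both Jost functions are on their cuts. The resulting column swaps give $a_+=a_-^*$ and $b_+=-b_-^*$.
\item On $c_r<|k|<c_l$ only $\Phi_l$ jumps. There the roles of $[\Phi_l]_1$ and $\sigma_1\overline{[\Phi_l]_1}$ interchange, which yields $a_+=b_-^*$ and $b_+=a_-^*$.
\end{itemize}

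Dividing gives the jumps for $r$. Since $\det S=aa^*-bb^*=1$ for $|k|>c_l$ (by $\det\Phi_j=1$), we get $|r|<1$ there. On $c_r<|k|<c_l$ the jump relations force $|r|=1$ on the real line.

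For (b), zeros of $a$ in $\mathbb{C}^+$ would be eigenvalues of the self-adjoint defocusing Zakharov--Shabat operator $i\sigma_3(\partial_x-Q)$. These are real, so there are none off the axis. On the axis, $|a|^2=1+|b|^2>0$ outside the cuts. Inside the cuts a zero of $a$ would give a bounded $L^2$-type solution at an edge resonance, and this I would exclude using the jump relations together with $|a_+|^2$-type identities from $\det S=1$. For (e), I expand $\Phi_j$ in powers of $(k\mp c_j)^{1/2}$, with eight terms available from the $\mathcal{C}^8$ regularity, and take the quotient. The rotations $i^m$ and $(-1)^m$ record the branch of $(k-c_j)^{1/2}$ on each side of the point. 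The constraint $\sum_m i^{n-m}(-i)^m b_{j,n-m}\overline{b_{j,m}}=0$ is the coefficientwise form of $|r|=1$ (that is, $rr^*=1$) on the adjacent interval. The conditions $|b_{j,0}|=1$ and $b_{j,1}\neq0$ come from the generic (non-resonant) behaviour of $a$ at $c_j$.

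I expect the main obstacle to be the edge analysis at $\pm c_l$ and $\pm c_r$. Both the nonvanishing of $a$ there and $b_{j,1}\neq0$ require controlling the Volterra solutions uniformly as $X_j(k)\to0$, which is exactly where the eight moments are used.
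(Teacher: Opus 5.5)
The paper does not prove this proposition; it quotes it from \cite{xuzyd}, so your outline has to stand on its own. Several parts are fine at the level of a sketch: (a), (c), the bound $|r|<1$ for $|k|>c_l$, and (b) off the real axis, where a zero in $\mathbb{C}^+$ gives an $L^2$ eigenfunction of the self-adjoint operator $i\sigma_3(\partial_x-Q)$.

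\textbf{The gap: (b) on the real axis inside the cuts.} On $c_r<|k|<c_l$ you need no resonance argument. There $[\Phi_l]_{1,+}=\sigma_1\overline{[\Phi_l]_{1,+}}$ by (d)--(e) of Proposition \ref{prop: properties of Phi(r,j)}, and $[\Phi_r]_1=\sigma_1\overline{[\Phi_r]_2}$, which give $b_+=\overline{a_+}$. So $a_+(k_0)=0$ would make $[\Phi_l]_{1,+}$ proportional to both $[\Phi_r]_1$ and $[\Phi_r]_2$, which are independent there.

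On $(-c_r,c_r)$ this fails. There $[\Phi_l]_{1,+}$ decays exponentially at $-\infty$ and $[\Phi_r]_{2,+}$ at $+\infty$. A zero of $a_+$ is therefore an honest $L^2$ eigenvalue in the spectral gap, not an edge resonance. Self-adjointness only makes it real, and no identity $\det S=1$ is available there, since $\det\Phi_j=1$ holds only off $[-c_j,c_j]$.

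This step cannot be completed from Assumption \ref{assumption on q_0}. Set $u=\psi_1+\psi_2$; the $x$-equation becomes $-u''+(q_0^2+q_0')u=k^2u$, whose operator $H$ has essential spectrum $[c_r^2,\infty)$. Suppose $q_0$ carries a smooth dip $c_r-\varepsilon\phi(x-x_0)$, with $\phi\ge0$ compactly supported and $\varepsilon$ small, placed far to the right of the step. Take a trial function equal to $1$ on the dip and of width $L\gg1$. Then $\langle (H-c_r^2)u,u\rangle=\mathcal{O}(L^{-1})-2c_r\varepsilon\int\phi+\varepsilon^2\int\phi^2<0$.

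So $H$ has an eigenvalue $\lambda\in(0,c_r^2)$. The value $\lambda=0$ is excluded because $-u'+q_0u=0$ has no $L^2$ solution. Hence $\pm\sqrt{\lambda}\in(-c_r,c_r)$ are zeros of $a_+$; these are dark solitons. Part (b) is really a no-discrete-spectrum hypothesis carried over from \cite{xuzyd}, and you must impose it rather than derive it.

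\textbf{The same issue in (e).} Dismissing $b_{j,1}\neq0$ as ``generic non-resonant behaviour'' is not a proof.

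At $c_l$ you can actually prove it. The limit $\psi_0=\lim_{k\nearrow c_l}(c_l-k)^{1/4}[\Phi_l]_{1,+}$ is $\sigma_1$-real, so the leading coefficients of $a$ and $b$ at $c_l$ are conjugate up to a phase. They cannot both vanish, because $[\Phi_r]_1$ and $[\Phi_r]_2$ are independent at $c_l>c_r$. Hence $|a|\asymp|k-c_l|^{-1/4}$, and $1-|r|^2=|a|^{-2}$ on $(c_l,\infty)$ forces $\overline{b_{l,0}}\,b_{l,1}<0$.

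At $c_r$ this argument breaks down. Both $[\Phi_l]_{1,+}(c_r)$ and the solution bounded at $+\infty$ are $\sigma_1$-real (up to a phase), so a threshold resonance is compatible with all the symmetries. It arises, for example, at the parameter value where a gap eigenvalue as above is born at $k=c_r$. It must therefore be excluded by hypothesis or by a separate argument.

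\textbf{Smaller points.}
\begin{itemize}
\item $|b_{j,0}|=1$ needs no genericity; it follows from continuity and $|r|=1$ on $(c_r,c_l)$.
\item You give no argument for $|r|<1$ on $(-c_r,c_r)$, where $\det S=1$ is unavailable.
\item Integrating by parts to get $\partial_k^m r=\mathcal{O}(k^{-5})$ requires weighted integrability of $\partial_x^n q_0$. The $L^\infty$ bounds of Assumption \ref{assumption on q_0}(b) do not supply this.
\end{itemize}
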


\subsection{RH characterization of the mKdV equation}
In order to construct a basic RH problem for the long-time asymptotic analysis, it is required to operate
the time evolution of the scattering data. Assuming that the solution $q(x,t)$ of Cauchy problem
\eqref{equ:mkdv}--\eqref{Initial data} exists for $t\geqslant 0$, it is followed that
\begin{align}
    &\Phi_{l}(k;x,t)=\Phi_{l}^{b}(k;x,t)\left(I+o(1)\right), \quad k\in\mathbb{R},\ x\rightarrow-\infty, \label{equ:Jostsolwithtime-1}\\
    &\Phi_{r}(k;x,t)=\Phi_{r}^{b}(k;x,t)\left(I+o(1)\right), \quad   k\in\mathbb{R},\ x\rightarrow+\infty, \label{equ:Jostsolwithtime-2}
\end{align}
where $\Phi_{j}^{b}(k;x,t)$ for $j\in\{l,r\}$  is defined in \eqref{equ:Phi_j^p}.

Since $\Phi_l$ and $\Phi_r$ are defined as simultaneous solutions of the Lax pair \eqref{equ:lax pair}, they must be linearly dependent. Consequently, from \eqref{equ:Jostsolwithtime-1} and \eqref{equ:Jostsolwithtime-2}, we obtain the following relation:
\begin{equation*}
    \Phi_{l}(k;x,t)=\Phi_{r}(k;x,t)S(k;t), \quad k\in\mathbb{R}\setminus\{\pm c_{l}, \pm c_r\}.
\end{equation*}
On account of the Lax pair \eqref{equ:lax pair}, it is obtained that
\begin{align*}
 \frac{\partial S(k;t)}{\partial t}\equiv 0,
\end{align*}
which implies that $\partial_t a(k;t)=0$ and $\partial_t b(k;t)=0$. This claim shows that the scattering data $a$ and $b$ are constant with respect to time.

%Let us define the following matrix-valued function $M^{(0)}(k):=M(k;x,0)$ by
%\begin{equation}\label{def:M0(k)}
%   M^{(0)}(k):=M(k;x,0)=\left\{
%        \begin{aligned}
%        &\left(\frac{[\Phi_l(k;x,0)]_1}{a(k)}, [\Phi_r(k;x,0)]_2\right)e^{ikx\sigma_3}, \quad k\in \mathbb{C}^{+}, \\
%        &\left([\Phi_r(k;x,0)]_1, \frac{[\Phi_l(k;x,0)]_2}{a^{*}(k)}\right)e^{ikx\sigma_3}, \quad k\in \mathbb{C}^{-},
%        \end{aligned}
%        \right.
%\end{equation}
%The matrix-valued function $M^{(0)}$ satisfies the RH problem below.
%\begin{RHP}\label{RHP:basic 0RHP}
%    \hfill
%    \begin{itemize}
%        \item $M^{(0)}(k)$ is holomorphic for $k\in\mathbb{C}\backslash\mathbb{R}$.
%        \item $M^{(0)}(k)$ has continuous boundary values $M^{(0)}_{\pm}(k)$ on $\mathbb{R}$ with the jump condition
 %       \begin{align*}
%            M^{(0)}_{+}(k)=M^{(0)}_{-}(k)V_0(k),
%        \end{align*}
%        where
%        \begin{equation}\label{equ:jump V^{(0)}(k)}
%            V^{(0)}(k)=
 %               \begin{cases}
 %               \begin{pmatrix}1-rr^* & - r^*e^{-2ikx}\\ re^{2ikx} & 1\end{pmatrix}, &k\in(-\infty,-c_l)\cup(c_l,+\infty),  \\
 %               \begin{pmatrix}0 & - r^*_{-}e^{-2ikx}\\ r_{+}e^{2ikx} & 1\end{pmatrix}, &k\in (-c_l,-c_{r})\cup(c_{r},c_l),\\
 %               \begin{pmatrix}0 & -e^{-2ikx}\\ e^{2ikx} & 0\end{pmatrix},  &k\in (-c_{r}, c_{r}).
 %               \end{cases}
 %       \end{equation}

 %       \item As $k\rightarrow \infty$ in $\mathbb{C}\setminus\mathbb{R}$, we have $M(k)=I+\mathcal{O}(k^{-1})$.
%    \end{itemize}
%\end{RHP}

Due to the time dependence, we are motivated to construct a piecewise analytic matrix-valued function as follows:
\begin{equation}
  M(k)=  M(k;x,t):=\left\{
        \begin{aligned}
        &\left(\frac{[\Phi_l(k;x,t)]_1}{a(k)}, [\Phi_r(k;x,t)]_2\right)e^{it\theta(k)\sigma_3}, \quad k\in \mathbb{C}^{+}, \\
        &\left([\Phi_r(k;x,t)]_1, \frac{[\Phi_l(k;x,t)]_2}{a^{*}(k)}\right)e^{it\theta(k)\sigma_3}, \quad k\in \mathbb{C}^{-},
        \end{aligned}
        \right.
\end{equation}
where $\theta(k)=\theta(k;\xi):=4k^3+12k\xi$ and $\xi=x/(12t)$. The matrix-valued function $M(\cdot):\mathbb{C}\backslash\mathbb{R}\to GL(2,\mathbb{C})$ satisfies the RH problem below.
\begin{RHP}\label{RHP:basic RHP}
    \hfill
    \begin{itemize}
        \item $M(k)$ is holomorphic for $k\in\mathbb{C}\backslash\mathbb{R}$.
        \item $M(k)$ has continuous boundary values $M_{\pm}(k)$ on $\mathbb{R}$ with the jump condition
        \begin{align*}
            M_{+}(k)=M_{-}(k)V(k),
        \end{align*}
        where
        \begin{equation}\label{equ:jump V(k)}
            V(k)=
                \begin{cases}
                \begin{pmatrix}1-r(k)r^*(k) & - r^*(k)e^{-2it\theta(k)}\\ r(k)e^{2it\theta(k)} & 1\end{pmatrix}, &k\in(-\infty,-c_l)\cup(c_l,+\infty),  \\
                \begin{pmatrix}0 & - r^*_{-}(k)e^{-2it\theta(k)}\\ r_{+}(k)e^{2it\theta(k)} & 1\end{pmatrix}, &k\in (-c_l,-c_{r})\cup(c_{r},c_l),\\
                \begin{pmatrix}0 & -e^{-2it\theta(k)}\\ e^{2it\theta(k)} & 0\end{pmatrix},  &k\in (-c_{r}, c_{r}).
                \end{cases}
        \end{equation}

        \item As $k\rightarrow \infty$ in $\mathbb{C}\setminus\mathbb{R}$, we have $M(k)=I+\mathcal{O}(k^{-1})$.
        \item As $k\rightarrow\pm c_{l}$, we have $M(k)=\mathcal{O}(1)$.

        \item As $k\rightarrow\pm c_{r}$, we have
        \begin{align*}
            &M(k)=\mathcal{O}\begin{pmatrix}(k\mp c_{r})^{\frac{1}{4}} & (k\mp c_{r})^{-\frac{1}{4}} \\ (k\mp c_{r})^{\frac{1}{4}} & (k\mp c_{r})^{-\frac{1}{4}}\end{pmatrix}, \quad k\in\mathbb{C}^{+}, \\
            &M(k)=\mathcal{O}\begin{pmatrix}(k\mp c_{r})^{-\frac{1}{4}} & (k\mp c_{r})^{\frac{1}{4}} \\ (k\mp c_{r})^{-\frac{1}{4}} & (k\mp c_{r})^{\frac{1}{4}}\end{pmatrix}, \quad k\in\mathbb{C}^{-}.
        \end{align*}
    \end{itemize}
\end{RHP}                                                                      It follows from item (d) of Proposition \ref{prop: properties of Phi(r,j)} and
item (c) of Proposition \ref{prop:a,b,r} that the solution $M(k)$ to the RH problem \ref{RHP:basic RHP}
automatically satisfies the following symmetries:
\begin{align*}
M(k)=\sigma_1M^{*}(k)\sigma_1=\overline{M(-\bar{k})}=\sigma_1M(-k)\sigma_1.
\end{align*}
Furthermore, under Assumption \ref{assumption on q_0} for the initial data $q_0$,
it follows from the proof of \cite[Theorem 7]{lenellsmkdvfourier} that
a classical solution $q(x,t)$ of the mKdV equation \eqref{equ:mkdv} exists and
can be reconstructed via the following limit for all $(x,t)\in\mathbb{R}\times[0,+\infty)$:
\begin{equation}\label{equ:recovering formula}
    q(x,t)=2i\lim_{k\rightarrow\infty} \left(kM(k)\right)_{12}.
\end{equation}

%\subsubsection{Factorizations of jump matrix}

A crucial step of performing the nonlinear steepest descent analysis is to open lenses, which is aided by two well-known
factorizations of the jump matrix $V(k)$ defined in \eqref{equ:jump V(k)}. Denote $r_2(k)={r^*(k)}/{(1-r(k)r^*(k))}$ for
$k\in\mathbb{R}\setminus[-c_l,c_l]$.
By Proposition \ref{prop:a,b,r},  the factorizations
utilized throughout the context can be listed as follows.

For $k\in(-\infty,-c_l)\cup(c_l,+\infty)$,
\begin{align*}
    \begin{pmatrix} 1-rr^* & -r^*e^{-2it\theta} \\ re^{2it\theta} & 1 \end{pmatrix}&=\begin{pmatrix} 1 & -r^*e^{-2it\theta} \\ 0 & 1\end{pmatrix}\begin{pmatrix} 1 & 0 \\ re^{2it\theta} & 1\end{pmatrix}\label{factor11}\\
        &=\begin{pmatrix} 1 & 0 \\r_2^*e^{2it\theta} & 1 \end{pmatrix}\left(1-rr^*\right)^{\sigma_3}\begin{pmatrix} 1 & -r_2e^{-2it\theta}\\ 0 & 1\end{pmatrix}.
\end{align*}

For $k\in (-c_l,-c_{r})\cup(c_{r},c_l)$, %where $r_+=1/r_-^*$,
\begin{align*}
    \begin{pmatrix} 0 & -r_-^*e^{-2it\theta} \\ r_+e^{2it\theta} & 1 \end{pmatrix}&=\begin{pmatrix} 1 & -r_-^*e^{-2it\theta} \\ 0 & 1\end{pmatrix}\begin{pmatrix} 1 & 0 \\ r_+e^{2it\theta} & 1\end{pmatrix}\\
        &=\begin{pmatrix} 1 & 0 \\ r_{2,-}^*e^{2it\theta}& 1 \end{pmatrix}\begin{pmatrix} 0 & -r_-^*e^{-2it\theta} \\ r_{+}e^{2it\theta} & 0\end{pmatrix}\begin{pmatrix} 1 & -r_{2,+}e^{-2it\theta}\\ 0 & 1\end{pmatrix}.
\end{align*}

For $k\in (-c_{r}, c_{r})$, %where $r_{+}=-r_-^*$,
\begin{align*}
    \begin{pmatrix}0 & -e^{-2it\theta}\\ e^{2it\theta} & 0\end{pmatrix}&=\begin{pmatrix} 1 & -r_-^*e^{-2it\theta} \\ 0 & 1\end{pmatrix}\begin{pmatrix}0 & -e^{-2it\theta}\\ e^{2it\theta} & 0\end{pmatrix}\begin{pmatrix} 1 & 0 \\ re^{2it\theta} & 1\end{pmatrix}\\
    &=\begin{pmatrix} 1 & 0 \\ r_{2,-}^*e^{2it\theta} & 1 \end{pmatrix}\begin{pmatrix} 0 & -e^{-2it\theta} \\ e^{2it\theta} & 0\end{pmatrix}\begin{pmatrix} 1 & -r_{2,+}e^{-2it\theta}\\ 0 & 1\end{pmatrix}.
\end{align*}

\section{Asymptotic analysis of the RH problem for $M$ in $\mathcal{T}_{\textup{\uppercase\expandafter{\romannumeral1}}}$}\label{sec:asymptotic analysis in RI}
This section is devoted to the long-time asymptotic analysis
of the RH problem \ref{RHP:basic RHP} in the region $\mathcal{T}_{\textup{\uppercase\expandafter{\romannumeral1}}}$. It is assumed that $-C<t^{2/3}\left(\xi+\frac{c_l^2}{2}\right)<0$ throughout this section since the analysis on the other half region of $\mathcal{T}_{\textup{\uppercase\expandafter{\romannumeral1}}}$ is similar.

\subsection{First transformation: $M\rightarrow M^{(1)}$}
We begin with the introduction of an auxiliary function, so-called $g$-function \cite{gfunction, G2001,GT2002},  to control the exponentially growing off-diagonal factors in the jump matrix \eqref{equ:jump V(k)}.
% Let us start with the construction of the $g$-function, which is crucial to deform
%the RH problem \ref{RHP:basic RHP} into a form suitable for asymptotic analysis.
%\subsubsection{The $g$-function}
For $\xi\in\mathcal{T}_{\textup{\uppercase\expandafter{\romannumeral1}}}$, we introduce
\begin{equation}\label{equ:g function RI}
g_{\textup{\uppercase\expandafter{\romannumeral1}}}(k)=g_{\textup{\uppercase\expandafter{\romannumeral1}}}(k;\xi):=(4k^2+12\xi+2c_{l}^2)X_{l}(k)
\end{equation}
with $X_l(k)=\sqrt{k^2-c_l^2}$ given in \eqref{def:X_j(k)}.
The branch of the square root is chosen such that $X_l(k)=k+\mathcal{O}(k^{-1})$ as $k\rightarrow\infty$.
%It's readily verified that $g_{\textup{\uppercase\expandafter{\romannumeral1}}}$ defined in \eqref{equ:g function RI} satisfies the following properties:
\begin{proposition}The  function $g_{\textup{\uppercase\expandafter{\romannumeral1}}}$ defined in \eqref{equ:g function RI} satisfies the following properties:
    \begin{itemize}
        \item $g_{\textup{\uppercase\expandafter{\romannumeral1}}}(k)$ is holomorphic for $k\in\mathbb{C}\setminus[-c_l,c_l]$.
        \item As $k\rightarrow\infty$ in $\mathbb{C}\setminus[-c_l,c_l]$,
        we have $g_{\textup{\uppercase\expandafter{\romannumeral1}}}(k)=\theta(k)+\mathcal{O}(k^{-1})$.
        \item For $k\in(-c_l,c_l)$, $g_{\textup{\uppercase\expandafter{\romannumeral1}},+}(k)+g_{\textup{\uppercase\expandafter{\romannumeral1}}, -}(k)=0$.
        \item As $k\to\pm c_l$, we have
        \begin{equation}\label{g_I asy cl}
             g_{\textup{\uppercase\expandafter{\romannumeral1}}}(k)=6(\pm2c_l)^{\frac{1}{2}}(c_l^2+2\xi)(k\mp c_l)^{\frac{1}{2}}+\left[3(\pm2c_l)^{-\frac{1}{2}}(c_l^2+2\xi)+4(\pm2c_l)^{\frac{3}{2}}\right](k\mp c_l)^{\frac{3}{2}}+\mathcal{O}((k\mp c_l)^{\frac{5}{2}}).
        \end{equation}

    \end{itemize}
\end{proposition}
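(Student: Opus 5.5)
The four properties are direct consequences of the explicit formula \eqref{equ:g function RI}. Throughout we use the branch of $X_l(k)=\sqrt{k^2-c_l^2}$ fixed by $X_l(k)=k+\mathcal{O}(k^{-1})$.

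\emph{Analyticity and the sign flip on the cut.} The function $X_l(k)$ is holomorphic on $\mathbb{C}\setminus[-c_l,c_l]$. For $k\in(-c_l,c_l)$ its boundary values satisfy $X_{l,+}(k)=-X_{l,-}(k)$; explicitly, $X_{l,\pm}(k)=\pm i\sqrt{c_l^2-k^2}$. The prefactor $4k^2+12\xi+2c_l^2$ is entire. Hence $g_{\textup{\uppercase\expandafter{\romannumeral1}}}$ is holomorphic off $[-c_l,c_l]$, and
\[
g_{\textup{\uppercase\expandafter{\romannumeral1}},+}+g_{\textup{\uppercase\expandafter{\romannumeral1}},-}=(4k^2+12\xi+2c_l^2)(X_{l,+}+X_{l,-})=0 .
\]
This settles the first and third bullets.

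\emph{Behaviour at infinity.} Expand $X_l(k)=k-\frac{c_l^2}{2k}+\mathcal{O}(k^{-3})$ and multiply by the prefactor:
\[
(4k^2+12\xi+2c_l^2)\Big(k-\frac{c_l^2}{2k}+\mathcal{O}(k^{-3})\Big)=4k^3+(12\xi+2c_l^2-2c_l^2)k+\mathcal{O}(k^{-1}).
\]
The $k$ coefficient is $12\xi$, and the constant term vanishes by oddness. Therefore $g_{\textup{\uppercase\expandafter{\romannumeral1}}}(k)=\theta(k)+\mathcal{O}(k^{-1})$, which is the second bullet.

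\emph{Local expansion at $\pm c_l$.} This bullet is the only one needing bookkeeping. Set $u=k\mp c_l$, so that $k\pm c_l=\pm2c_l+u$. Then
\[
X_l=u^{1/2}(\pm2c_l+u)^{1/2}=(\pm2c_l)^{1/2}u^{1/2}\Big(1+\frac{u}{\pm4c_l}+\mathcal{O}(u^2)\Big),
\]
with branches chosen consistently with the global branch of $X_l$. The prefactor expands as
\[
4(\pm c_l+u)^2+12\xi+2c_l^2=6(c_l^2+2\xi)\pm8c_lu+\mathcal{O}(u^2).
\]
\begin{itemize}
\item The $u^{1/2}$ coefficient is $6(\pm2c_l)^{1/2}(c_l^2+2\xi)$.
\item The $u^{3/2}$ coefficient is $(\pm2c_l)^{1/2}\big[\frac{6(c_l^2+2\xi)}{\pm4c_l}\pm8c_l\big]$.
\item The rewriting $(\pm2c_l)^{1/2}\cdot\frac{6}{\pm4c_l}=3(\pm2c_l)^{-1/2}$ gives the first part of that coefficient.
\item The rewriting $(\pm2c_l)^{1/2}(\pm8c_l)=4(\pm2c_l)^{3/2}$ gives the second part.
\end{itemize}
This reproduces \eqref{g_I asy cl}, with remainder $\mathcal{O}(u^{5/2})$.

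The main point of care is the branch convention for $(\pm2c_l)^{1/2}$ and $u^{1/2}$ near $-c_l$. It must be chosen so that the product matches the global $X_l$. I would check this by the oddness $X_l(-k)=-X_l(k)$, or equivalently the symmetry $g_{\textup{\uppercase\expandafter{\romannumeral1}}}(-k)=-g_{\textup{\uppercase\expandafter{\romannumeral1}}}(k)$, which lets the expansion at $-c_l$ be obtained from the one at $c_l$ by substituting $k\to -k$.
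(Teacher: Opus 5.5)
Your proposal is correct and is exactly the direct verification from the explicit formula \eqref{equ:g function RI} that the paper leaves to the reader, since the proposition is stated there without proof; your expansions at infinity and at $\pm c_l$ check out term by term. Your caveat about the branch at $-c_l$ is also the right one: with $(k+c_l)^{1/2}$ taken as the principal branch, $(-2c_l)^{1/2}$ must be read as $\pm i\sqrt{2c_l}$ for $k\in\mathbb{C}^\pm$, i.e. as the boundary value of $(k-c_l)^{1/2}$, and this is consistent with the oddness $g_{\textup{\uppercase\expandafter{\romannumeral1}}}(-k)=-g_{\textup{\uppercase\expandafter{\romannumeral1}}}(k)$.
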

It is readily seen that the $k$-derivative of $g_{\textup{\uppercase\expandafter{\romannumeral1}}}$ is given by
\begin{equation}\label{g_1'}   g_{\textup{\uppercase\expandafter{\romannumeral1}}}'(k)=\frac{12k\left(k-\eta_l(\xi)\right)\left(k+\eta_l(\xi)\right)}{X_l(k)},
\end{equation}
where
\begin{align*}
   \eta_l(\xi)=\sqrt{-\xi+\frac{c_l^2}{2}}.
\end{align*}
Moreover, direct calculations based on the above definition show that for $\xi\in\mathcal{T}_{\textup{\uppercase\expandafter{\romannumeral1}}}$, two saddle points $\pm\eta_l$ tend to $c_l$ at a rate of at least $\mathcal{O}(t^{-\frac{2}{3}})$ as $t\to+\infty$.
The signature table for $\im g_{\textup{\uppercase\expandafter{\romannumeral1}}}$ is illustrated in Figure \ref{fig:signs img RI}.
\begin{figure}[H]
\begin{center}
    \tikzset{every picture/.style={line width=0.75pt}} %set default line width to 0.75pt
    \begin{tikzpicture}[x=0.75pt,y=0.75pt,yscale=-1,xscale=1]
    %uncomment if require: \path (0,300); %set diagram left start at 0, and has height of 300
    %Curve Lines [id:da11389389910090686]
    \draw    (413,61) .. controls (387,61) and (386,242) .. (419,242) ;
    %Straight Lines [id:da8850126286527125]
    \draw    (159,152) -- (475,152) ;
    %Curve Lines [id:da9567273527008515]
    \draw    (203,61) .. controls (228,62) and (228,242) .. (200,243) ;
    % Text Node
    \draw (200,155.4) node [anchor=north west][inner sep=0.75pt]  [font=\tiny]  {\footnotesize$-\eta_l$};
       \fill (221,152) circle (1.2pt);
    % Text Node
    \draw (395,156.4) node [anchor=north west][inner sep=0.75pt]  [font=\tiny]  {\footnotesize$\eta_l$};
        \fill (394,152) circle (1.2pt);
    % Text Node
    \draw (232,155.4) node [anchor=north west][inner sep=0.75pt]  [font=\tiny]  {\footnotesize$-c_{l}$};
    % Text Node
    \draw (366,156.4) node [anchor=north west][inner sep=0.75pt]  [font=\tiny]  {\footnotesize$c_{l}$};
    % Text Node
    \draw (255.18,155.4) node [anchor=north west][inner sep=0.75pt]  [font=\tiny,rotate=-1.56]  {\footnotesize$-c_{r}$};
    % Text Node
    \draw (344,156.4) node [anchor=north west][inner sep=0.75pt]  [font=\tiny]  {\footnotesize$c_{r}$};
    % Text Node
    \draw (441,102.4) node [anchor=north west][inner sep=0.75pt]    {$+$};
    % Text Node
    \draw (447,183.4) node [anchor=north west][inner sep=0.75pt]    {$-$};
    % Text Node
    \draw (294,90.4) node [anchor=north west][inner sep=0.75pt]    {$-$};
    % Text Node
    \draw (299,202.4) node [anchor=north west][inner sep=0.75pt]    {$+$};
    % Text Node
    \draw (170,185.4) node [anchor=north west][inner sep=0.75pt]    {$-$};
    % Text Node
    \draw (173,103.4) node [anchor=north west][inner sep=0.75pt]    {$+$};
    % Text Node

    \fill (237,152) circle (1.2pt);
    % Text Node
   % \draw (261,147.4) node [anchor=north west][inner sep=0.75pt]  [font=\tiny]  {$($};
   \fill (261,152) circle (1.2pt);
    % Text Node
    %\draw (369,146.4) node [anchor=north west][inner sep=0.75pt]  [font=\tiny]  {$)$};
    \fill (369,152) circle (1.2pt);
    % Text Node
    %\draw (347,147.4) node [anchor=north west][inner sep=0.75pt]  [font=\tiny]  {$)$};
    \fill (347,152) circle (1.2pt);
    \end{tikzpicture}
\caption{The signature table of $\im g_{\textup{\uppercase\expandafter{\romannumeral1}}}$ for $\xi\in\mathcal{T}_{\textup{\uppercase\expandafter{\romannumeral1}}}$,
where ``$+$'' and ``$-$'' denote $\im g_{\textup{\uppercase\expandafter{\romannumeral1}}} > 0$ and $\im g_{\textup{\uppercase\expandafter{\romannumeral1}}} < 0$ in the corresponding regions, respectively.}\label{fig:signs img RI}
\end{center}
\end{figure}
\subsubsection{RH problem for $M^{(1)}$}
By the function $g_{\textup{\uppercase\expandafter{\romannumeral1}}}$ defined in \eqref{equ:g function RI}, we introduce a new matrix-valued function $M^{(1)}$ by
\begin{equation}\label{def:M1 RI}
   M^{(1)}(k)=  M^{(1)}(k;x,t):=M(k)e^{it\left(g_{\textup{\uppercase\expandafter{\romannumeral1}}}(k)-\theta(k)\right)\sigma_3}.
\end{equation}
Then the RH problem for $M^{(1)}$ reads as follows:
\begin{RHP}\label{RHP:M1 RI}
\hfill
\begin{itemize}
    \item $M^{(1)}(k)$ is holomorphic for $k\in\mathbb{C}\setminus\mathbb{R}$.
    \item $M^{(1)}(k)$ has continuous boundary values $M_{\pm}^{(1)}(k)$ on $\mathbb{R}$ with the jump condition
    \begin{equation*}
        M^{(1)}_{+}(k)=M^{(1)}_{-}(k)V^{(1)}(k), \quad k\in\mathbb{R},
    \end{equation*}
    where
    \begin{equation}\label{equ:jump V1 RI}
        V^{(1)}(k)=
            \begin{cases}
            \begin{pmatrix}1-r(k)r^*(k) & -r^*(k)e^{-2itg_{\textup{\uppercase\expandafter{\romannumeral1}}}(k)}\\ r(k)e^{2itg_{\textup{\uppercase\expandafter{\romannumeral1}}}(k)} & 1\end{pmatrix}, &k\in(-\infty,-c_l)\cup(c_l,+\infty),  \\
            \begin{pmatrix}0 & -r_{-}^*(k)\\ r_{+}(k) & e^{-2itg_{\textup{\uppercase\expandafter{\romannumeral1}},+}(k)} \end{pmatrix}, &k\in (-c_l,-c_{r})\cup(c_{r},c_l),\\
            \begin{pmatrix}0 & -1 \\ 1 & 0\end{pmatrix},  &k\in (-c_{r}, c_{r}).
            \end{cases}
    \end{equation}
    \item As $k\rightarrow\infty$ in $\mathbb{C}\setminus\mathbb{R}$, we have $M^{(1)}(k)=I+\mathcal{O}(k^{-1})$.
    \item $M^{(1)}(k)$ admits the same singular behavior as $M(k)$ at branch points $\pm c_{l}$ and $\pm c_{r}$; see RH problem \textup{\ref{RHP:basic RHP}}.
\end{itemize}
\end{RHP}
\subsection{Second transformation: $M^{(1)}\rightarrow M^{(2)}$}
In this section, we introduce an auxiliary function
$D_{\textup{\uppercase\expandafter{\romannumeral1}}}$ to
pave the way for the subsequent contour deformation
along the rays $(-\infty, -\eta_{l})$ and $(\eta_{l}, +\infty)$. Moreover, we need to keep the segments $[-\eta_l,-c_l]\cup[c_l,\eta_l]$ on the line.
\subsubsection{The $D_{\textup{\uppercase\expandafter{\romannumeral1}}}$ function}
Define $D_{\textup{\uppercase\expandafter{\romannumeral1}}}: \mathbb{C}\setminus([-c_l,-c_r]\cup[c_r,c_l])   \times \mathcal{T}_{\textup{\uppercase\expandafter{\romannumeral1}}}\to\mathbb{C}$ by
\begin{equation}\label{equ:def D function RI}
\begin{aligned}
D_{\textup{\uppercase\expandafter{\romannumeral1}}}(k)=D_{\mathrm{I}}(k;\xi):=\exp\bigg\{\frac{X_{l}(k)}{2\pi i}
 \left(\int_{-c_l}^{-c_{r}}+\int_{c_{r}}^{c_l}\right)
\frac{\log r_{+}(z)}{X_{l+}(z)(z-k)}\dif z\bigg\}.
\end{aligned}
\end{equation}

The necessary properties of the function
$D_{\textup{\uppercase\expandafter{\romannumeral1}}}$ are given as follows.
\begin{proposition}\label{prop: D function RI}
The function $D_{\textup{\uppercase\expandafter{\romannumeral1}}}$ defined in \eqref{equ:def D function RI} satisfies the following properties for $\xi\in \mathcal{T}_{\textup{\uppercase\expandafter{\romannumeral1}}}$:
\begin{itemize}
    \item[\rm (a)]$D_{\textup{\uppercase\expandafter{\romannumeral1}}}(k)$ is holomorphic for $k\in \mathbb{C}\backslash([-c_l,-c_r]\cup[c_r,c_l])$.
    \item[\rm (b)]$D_{\textup{\uppercase\expandafter{\romannumeral1}}}(k)$ satisfies the following jump relations:
    \begin{equation*}
          \begin{aligned}
       & D_{\textup{\uppercase\expandafter{\romannumeral1}}, +}(k)D_{\textup{\uppercase\expandafter{\romannumeral1}}, -}(k)=r_{+}(k), &&k\in (-c_l,-c_{r})\cup(c_{r},c_l),\\
       & D_{\textup{\uppercase\expandafter{\romannumeral1}}, +}(k)D_{\textup{\uppercase\expandafter{\romannumeral1}}, -}(k)=1, &&k\in (-c_{r}, c_{r}).
        \end{aligned}
    \end{equation*}
    \item[\rm (c)]$D_{\textup{\uppercase\expandafter{\romannumeral1}}}(k)$ admits the symmetry: $D_{\textup{\uppercase\expandafter{\romannumeral1}}}(-k)=D_{\textup{\uppercase\expandafter{\romannumeral1}}}(k)^{-1}$ for $k\in\mathbb{C}\backslash([-c_l,-c_r]\cup[c_r,c_l])$.
    \item[\rm (d)]As $k\rightarrow \infty$, $D_{\textup{\uppercase\expandafter{\romannumeral1}}}(k)=1+\mathcal{O}(k^{-1})$ for all $\xi\in\mathcal{T}_{\textup{\uppercase\expandafter{\romannumeral1}}}$.
    \item[\rm (e)]At the branch points $\pm c_l$ and $\pm c_r$, we have:
    \begin{equation}\label{equ:singular behavior of D}
         \begin{aligned}
       & D_{\textup{\uppercase\expandafter{\romannumeral1}}}(k)=e^{\frac{i}{2}\arg b_{l,0}}\left(1+\mathcal{O}\left((c_l\mp k)^{1/2}\right)\right), &&k\rightarrow\pm c_{l},  \\
       & D_{\textup{\uppercase\expandafter{\romannumeral1}}}(k)=(\pm k- c_r)^{-\frac{\arg b_{r,0}}{2\pi}}e^{ d_{\textup{\uppercase\expandafter{\romannumeral1}},r}}\left(1+\mathcal{O}\left((\pm k- c_r)^{1/2}\right)\right), &&k\rightarrow\pm c_{r},
    \end{aligned}
    \end{equation}

where
\begin{align*}
  d_{\textup{\uppercase\expandafter{\romannumeral1}},r}&=\frac{\sqrt{c_l^2-c_r^2}}{2\pi}\int_{- c_{l}}^{- c_r}\frac{\log r_{+}(z)}{X_{l+}(z)(z- c_r)}\dif z+\frac{\arg b_{r,0}}{2\pi }\log (c_r-c_l)
\end{align*}
  with the principal branch being chosen for the complex power functions as well as the logarithms.
   \item[\rm (f)] $D_{\textup{\uppercase\expandafter{\romannumeral1}}}$ is analytic at $\pm\eta_l$, and
$D_{\textup{\uppercase\expandafter{\romannumeral1}}}\left(\eta_l\right)=e^{id_{\textup{\uppercase\expandafter{\romannumeral1}},\eta}},D_{\textup{\uppercase\expandafter{\romannumeral1}}}\left(-\eta_l\right)=e^{-id_{\textup{\uppercase\expandafter{\romannumeral1}},\eta}}$ with
    \begin{align*}
        d_{\textup{\uppercase\expandafter{\romannumeral1}},\eta}&=-\frac{\sqrt{\eta_l^2-c_l^2}}{2\pi }\left(\int_{-c_l}^{-c_{r}}+\int_{c_{r}}^{c_l}\right)\frac{\log r_{+}(z)}{X_{l+}(z)(z-\eta_l)}\dif z,
    \end{align*}
   and the principal branch being used for the logarithms.
\end{itemize}
\end{proposition}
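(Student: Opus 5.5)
The plan is to treat $D_{\mathrm{I}}$ as $\exp\{X_l(k)\,\mathcal{F}(k)\}$, where
\[
\mathcal{F}(k)=\frac{1}{2\pi i}\Big(\int_{-c_l}^{-c_r}+\int_{c_r}^{c_l}\Big)\frac{\log r_+(z)}{X_{l+}(z)(z-k)}\,\dif z
\]
is a Cauchy integral, and to read off each item from standard properties of Cauchy transforms combined with the algebraic behavior of $X_l$.

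\textbf{Items (a) and (b).} $\mathcal{F}$ is holomorphic off $[-c_l,-c_r]\cup[c_r,c_l]$, and $X_l$ is holomorphic off $[-c_l,c_l]$. On $(-c_r,c_r)$ we have $X_{l+}=-X_{l-}$ while $\mathcal{F}$ is continuous there, so $\log D_{+}+\log D_{-}=0$; this gives $D_{+}D_{-}=1$ on $(-c_r,c_r)$, and in particular $D$ has no jump that would obstruct (a) apart from this multiplicative one. On $(c_r,c_l)$, Plemelj gives $\mathcal{F}_+-\mathcal{F}_-=\log r_+/X_{l+}$. Hence
\[
X_{l+}\mathcal{F}_+ + X_{l-}\mathcal{F}_- = X_{l+}(\mathcal{F}_+-\mathcal{F}_-)=\log r_+,
\]
which is (b); the same computation applies on $(-c_l,-c_r)$.

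\textbf{Item (c).} Substitute $z\mapsto -z$ in the integral and use $X_l(-k)=-X_l(k)$. We also need the behavior of $\log r_+(-z)/X_{l+}(-z)$. From $|r_+|=1$ on these intervals, the symmetry $r(k)=\overline{r(-\bar k)}$, and the conjugation of boundary values, one gets $\log r_+(-z)=-\log r_+(z)$ (both are purely imaginary) together with $X_{l+}(-z)=X_{l+}(z)$. These combine to give $\log D(-k)=-\log D(k)$. The branch of $\log r_+$ must be fixed continuously on each interval and chosen consistently with this symmetry.

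\textbf{Item (d).} Expand $1/(z-k)=-k^{-1}-zk^{-2}-\cdots$. Then $X_l\mathcal{F}=-\frac{1}{2\pi i}\int\frac{\log r_+}{X_{l+}}\,\dif z+\mathcal{O}(k^{-1})$. The constant term vanishes by the oddness established in (c), since the integrand $\log r_+/X_{l+}$ is odd under $z\mapsto -z$. Therefore $D=1+\mathcal{O}(k^{-1})$.

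\textbf{Item (f).} Since $\eta_l>c_l$, $D$ is analytic at $\pm\eta_l$. Evaluate directly: for real $k>c_l$, $X_l(\eta_l)=\sqrt{\eta_l^2-c_l^2}$, and $\log r_+/X_{l+}$ is real up to a factor of $i$, since $\log r_+$ is imaginary and $X_{l+}=i\sqrt{c_l^2-z^2}$. So $\log D(\eta_l)$ is purely imaginary and equals $i d_{\mathrm{I},\eta}$. The value at $-\eta_l$ then follows from (c).

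\textbf{Item (e), the main obstacle.} This is the local analysis at the endpoints. At $\pm c_l$: $X_l$ vanishes like $(k\mp c_l)^{1/2}$, but $\mathcal{F}$ has a singularity there. Since $\log r_+(z)\to \log b_{l,0}=i\arg b_{l,0}$ as $z\to c_l$, and the density is $\log r_+/X_{l+}\sim (c_l-z)^{-1/2}$, the Cauchy integral behaves like $c\,(k-c_l)^{-1/2}$ plus a bounded part. I will compute the constant using the exact identity
\[
\frac{1}{2\pi i}\int_{-c_l}^{c_l}\frac{\dif z}{X_{l+}(z)(z-k)}=\frac{1}{2X_l(k)}.
\]
I split $\log r_+=\log b_{l,0}+(\log r_+-\log b_{l,0})$ near $c_l$, using the expansion \eqref{expansionrk}; the remainder is $\mathcal{O}((c_l-z)^{1/2})$. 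This yields $X_l\mathcal{F}\to \frac12\log b_{l,0}=\frac i2\arg b_{l,0}$ with an $\mathcal{O}((k-c_l)^{1/2})$ error. At $-c_l$ the analogous computation, combined with (c) and the conjugate coefficients, produces the stated form. I expect sign and branch bookkeeping here, to get the same $e^{\frac i2\arg b_{l,0}}$ at both ends, to be the delicate point.

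At $\pm c_r$, $X_l(c_r)=i\sqrt{c_l^2-c_r^2}$ is nonzero, and the density jumps to zero at the endpoint $c_r$ with value $\log b_{r,0}/X_{l+}(c_r)$. The standard endpoint expansion gives $\mathcal{F}(k)=-\frac{\log b_{r,0}}{2\pi i X_{l+}(c_r)}\log(k-c_r)+\text{bounded}+\mathcal{O}((k-c_r)^{1/2})$. Multiplying by $X_l(c_r)$, and using the fact that $\log b_{r,0}=i\arg b_{r,0}$, produces the power $(k-c_r)^{-\arg b_{r,0}/(2\pi)}$. The bounded remainder, evaluated by subtracting $\log b_{r,0}$ over the interval $(c_r,c_l)$ and adding the contribution from the far interval, gives $d_{\mathrm{I},r}$; the $\log(c_r-c_l)$ term comes from integrating the constant part exactly. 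I would carry out this constant bookkeeping last, checking it against the symmetry (c).
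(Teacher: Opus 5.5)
Your route is the paper's own, and (b), (c), (d), (f) are fine as you describe them: you write $\log D_{\mathrm I}=X_l\mathcal F$, use Plemelj for (b), the substitution $z\mapsto -z$ with $X_{l+}(-z)=X_{l+}(z)$ and $\log r_+(-z)=-\log r_+(z)$ for (c), oddness of $\log r_+/X_{l+}$ for (d), and direct evaluation for (f). The gaps are in (a) and (e). There you steer toward the stated conclusions even though your own computations contradict them. These parts cannot be proved as written and have to be corrected; the paper's terse proof passes over the same points.

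For (a), you found $\log D_{\mathrm I,\pm}=\pm X_{l+}\mathcal F$ on $(-c_r,c_r)$. Since $\log r_+/X_{l+}$ is real on the cuts, $\mathcal F$ is purely imaginary and $X_{l+}\mathcal F$ is real on $(-c_r,c_r)$. It is not identically zero: otherwise $\mathcal F\equiv 0$, hence $r_+\equiv 1$ on the cuts, contradicting $b_{l,1}\neq 0$. Thus $D_{\mathrm I,+}=D_{\mathrm I,-}^{-1}\neq D_{\mathrm I,-}$ somewhere on $(-c_r,c_r)$. So $D_{\mathrm I}$ is holomorphic only on $\mathbb C\setminus[-c_l,c_l]$, and your remark that this jump does not ``obstruct (a)'' is false.

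In (e) at $\pm c_l$, there are two problems.
\begin{itemize}
\item Property (c) and your value at $c_l$ force $D_{\mathrm I}(k)\to e^{-\frac{i}{2}\arg b_{l,0}}$ as $k\to-c_l$; directly, $r_+(-c_l+0)=\overline{b_{l,0}}$. So no sign bookkeeping yields the same constant at both ends unless $b_{l,0}=1$.
\item Your remainder density $(\log r_+-\log b_{l,0})/X_{l+}$ tends to $b_{l,1}/(b_{l,0}\sqrt{2c_l})\neq 0$ at $c_l$. Its Cauchy transform is therefore $\mathcal O(\log|k-c_l|)$, not bounded. Hence $\log D_{\mathrm I}$ really contains a term of exact order $|k-c_l|^{1/2}\log|k-c_l|$, and the error is $\mathcal O(|k-c_l|^{1/2}\log|k-c_l|)$. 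The paper's ``$+\mathcal O(1)$'' has the same flaw.
\end{itemize}

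At $\pm c_r$, the step ``multiplying by $X_l(c_r)$'' ignores that $c_r$ lies inside the cut of $X_l$, where $X_{l\pm}(c_r)=\pm i\sqrt{c_l^2-c_r^2}$. Your exponent $-\arg b_{r,0}/(2\pi)$ is right only in $\mathbb C^+$. In $\mathbb C^-$ it is $+\arg b_{r,0}/(2\pi)$, as it must be: $D_{\mathrm I,+}D_{\mathrm I,-}=1$ just to the left of $c_r$ forces the two exponents to sum to zero. Carrying out the constant exactly as you propose gives, for $k\to c_r$ in $\mathbb C^+$,
\[
\log D_{\mathrm I}(k)=-\frac{\arg b_{r,0}}{2\pi}\log(k-c_r)+d_{\mathrm I,r}+\frac{\sqrt{c_l^2-c_r^2}}{2\pi}\int_{c_r}^{c_l}\frac{1}{z-c_r}\Big(\frac{\log r_+(z)}{X_{l+}(z)}-\frac{\log b_{r,0}}{X_{l+}(c_r)}\Big)\dif z+\mathcal O\big(|k-c_r|^{1/2}\big).
\]
The regularized integral over $(c_r,c_l)$ is generically nonzero and is missing from the stated $d_{\mathrm I,r}$. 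So (e) near $\pm c_r$ holds only in corrected form, separately in each half-plane and with the full constant. You should flag this discrepancy rather than defer it as bookkeeping.
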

\begin{proof}
\hfill
\begin{itemize}
   \item[(a)] It follows from the definition of $D_{\textup{\uppercase\expandafter{\romannumeral1}}}$ by considering basic ideas
   of the Cauchy transformation.
   \item[(b)] The jump condition is an immediate consequence by
   using the well-known Sokhotski-Plemelj formula.
   \item[(c)] It can be straightforward verified.
   \item[(d)] With the aid of the fact $|r(k)|=1$ for $k\in (-c_l,-c_{r})\cup(c_{r},c_l)$,
   the asymptotics of $D_{\textup{\uppercase\expandafter{\romannumeral1}}}(k)$ at
   infinity denoted by $D_{\textup{\uppercase\expandafter{\romannumeral1}}, \infty}(\xi)$
   is given by
    \begin{equation*}
        D_{\textup{\uppercase\expandafter{\romannumeral1}}, \infty}(\xi)=\exp\left\{-\frac{1}{2\pi i}\left(\int_{-c_l}^{-c_{r}}+\int_{c_{r}}^{c_l}\right)\frac{i\arg r_{+}(z)}{X_{l+}(z)}\,\dif z\right\}.
        \end{equation*}
        By the symmetry relation \eqref{symmetry a,b,r} for $r(k)$ and the fact that $X_{l}(-k)=-X_{l}(k)$ for $k\in(c_l,\infty)$,
        it can be directly verified that $D_{\textup{\uppercase\expandafter{\romannumeral1}}, \infty}(\xi)=1$
        for all $\xi\in\mathcal{T}_{\textup{\uppercase\expandafter{\romannumeral1}}}$.

        \item[(e)] From item (e) of Proposition \ref{prop:a,b,r}, we know that
        \begin{equation*}
            \log r(z)=\log b_{l,0}+i\frac{b_{l,1}}{b_{l,0}}\sqrt{c_l-z}+\mathcal{O}(c_l-z), \quad z\to c_l,
        \end{equation*}
        which  implies that
        \begin{align*}
            \int_{c_{r}}^{c_l}\frac{\log r_{+}(z)}{X_{l+}(z)(z-k)}\dif z=\frac{\pi i \log b_{l,0}}{\sqrt{2c_l}}(k-c_l)^{-\frac{1}{2}}+\mathcal{O}(1),\quad z\to c_l.
        \end{align*}
        Combined with the asymptotics of $X_l(k)$, the first asymptotics of \eqref{equ:singular behavior of D}  can be obtained immediately. The asymptotic expansion of $D_{\textup{\uppercase\expandafter{\romannumeral1}}}$ near $\pm c_l$ follows in a similar manner.
        \item[(f)] It follows from the definition of $D_{\textup{\uppercase\expandafter{\romannumeral1}}}$.
\end{itemize}
\end{proof}
\subsubsection{RH problem for $M^{(2)}$}
With the help of $D_{\textup{\uppercase\expandafter{\romannumeral1}}}$ function,
we define
\begin{equation}\label{def:M2 RI}
   M^{(2)}(k)= M^{(2)}(k;x,t):=M^{(1)}(k)D_{\textup{\uppercase\expandafter{\romannumeral1}}}^{-\sigma_3}(k).
\end{equation}
Then RH problem for $M^{(2)}$ reads as follows:
\begin{RHP}
\hfill
\begin{itemize}
    \item $M^{(2)}(k)$ is holomorphic for $k\in\mathbb{C}\setminus\mathbb{R}$.
    \item $M^{(2)}(k)$ has continuous boundary values $M_{\pm}^{(2)}(k)$ on $\mathbb{R}$ with the jump condition
    \begin{equation*}
        M^{(2)}_{+}(k)=M^{(2)}_{-}(k)V^{(2)}(k),
    \end{equation*}
    where
    \begin{equation*}
        V^{(2)}(k)=
            \begin{cases}
            \begin{pmatrix}1-r(k)r^*(k) & -D_{\textup{\uppercase\expandafter{\romannumeral1}}}^2(k)r^*(k)e^{-2itg_\textup{\uppercase\expandafter{\romannumeral1}}(k)}\\ D_{\textup{\uppercase\expandafter{\romannumeral1}}}^{-2}(k)r(k)e^{2itg_\textup{\uppercase\expandafter{\romannumeral1}}(k)} & 1\end{pmatrix}, &k\in(-\infty,-c_l)\cup(c_l,+\infty),  \\

            \begin{pmatrix}0 & -1\\ 1 & D_{\textup{ \uppercase\expandafter{\romannumeral1}}, +}(k)D_{\textup{\uppercase\expandafter{\romannumeral1}}, -}^{-1}(k)e^{-2itg_{\textup{\uppercase\expandafter{\romannumeral1}},+}(k)} \end{pmatrix}, &k\in (-c_l,-c_{r})\cup(c_{r},c_l),\\
            \begin{pmatrix}0 & -1 \\ 1 & 0\end{pmatrix},  &k\in (-c_{r}, c_{r}).
            \end{cases}
    \end{equation*}
    \item As $k\rightarrow\infty$ in $\mathbb{C}\setminus\mathbb{R}$, we have $M^{(2)}(k)=I+\mathcal{O}(k^{-1})$.
\end{itemize}
\end{RHP}
\subsection{Third transformation: $M^{(2)}\rightarrow M^{(3)}$}
The aim of the third transformation is to open lenses in
regions $U^{(3)}_j,U^{(3)*}_j,\;j=1,2$, which are
illustrated in Figure \ref{fig:U_j domain RI}.
To this end, we need to construct the analytic approximation
for the spectral functions $r(k)$.

\subsubsection{Analytic approximation}
The following two proposition establish the
analytic approximation for $r$ in different regions
as illustrated in Figure \ref{fig:U_j domain RI}. Their proofs
are analogous to those of Lemma 5.2 in \cite{LenellsDNLS}, respectively,
and thus are omitted here.
\begin{proposition}[Analytic approximation of $r$]\label{analytic extension of r RI}There exist continuous functions
\begin{equation*}
    r_a: \left(\overline{U_1^{(3)}}\cup\overline{U_2^{(3)}}\right) \times \mathcal{T}_\textup{\uppercase\expandafter{\romannumeral1}} \rightarrow \mathbb{C} \; \text { and } \; r_r: \left((-\infty,-\eta_l)\cup(\eta_l,+\infty)\right) \times \mathcal{T}_\textup{\uppercase\expandafter{\romannumeral1}}\rightarrow \mathbb{C},
\end{equation*}
which satisfy the following properties:
\begin{itemize}
    \item [\rm (a)] $r(k)=r_a(k)+r_r(k)$, where $r_a(k)=r_a(k;\xi)$ and $r_r(k)=r_r(k;\xi)$ for all $(k;\xi) \in \{(-\infty,-\eta_l)\cup(\eta_l,+\infty)\} \times \mathcal{T}_\textup{\uppercase\expandafter{\romannumeral1}}$.
    \item [\rm (b)] $r_a(-k)=r_a^*(k)$ for $k\in\overline{U_1^{(3)}}\cup\overline{U_2^{(3)}}.$
    \item [\rm (c)] For all $\xi \in \mathcal{T}_\textup{\uppercase\expandafter{\romannumeral1}}$, the function $r_a: U_1^{(3)}\cup U_3^{(3)} \rightarrow \mathbb{C}$ is holomorphic.
    Moreover, for $(k;\xi)\in \left(\overline{U_1^{(3)}}\cup\overline{U_2^{(3)}}\right) \times \mathcal{T}_\textup{\uppercase\expandafter{\romannumeral1}}$, we have
\begin{equation*}
    \left|r_a(k)-r\left(\pm \eta_l\right)\right| \lesssim \left|k\mp \eta_l\right| e^{\frac{t}{4}|\operatorname{Im} g_\textup{\uppercase\expandafter{\romannumeral1}}(k)|},
\end{equation*}
% for all $\xi\in\mathcal{R}_\textup{\uppercase\expandafter{\romannumeral1}}$ uniformly,
and
\begin{equation*}
    \left|r_a(k)\right| \lesssim \frac{e^{\frac{t}{4}|\operatorname{Im} g_\textup{\uppercase\expandafter{\romannumeral1}}(k)|}}{1+|k|^2}.
\end{equation*}
\item [\rm (d)] For all $\xi \in \mathcal{T}_\textup{\uppercase\expandafter{\romannumeral1}}$, the function $r_r \in L^p\left((-\infty,-\eta_l)\cup(\eta_l,+\infty)\right),\; p\in[1,+\infty]$, and as $t \rightarrow +\infty$,
\begin{equation*}
    \left\|r_r(k)\right\|_{L^p\left((-\infty,-\eta_l)\cup(\eta_l,+\infty)\right)} = \mathcal{O}\left(t^{-1}\right).
\end{equation*}
\end{itemize}
\end{proposition}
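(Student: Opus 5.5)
We follow the standard decomposition scheme of Lenells (Lemma 5.2 in \cite{LenellsDNLS}), adapted to the phase $g_{\textup{\uppercase\expandafter{\romannumeral1}}}$ instead of $\theta$. By the symmetry $r(k)=\overline{r(-\bar k)}$ of Proposition \ref{prop:a,b,r}(c), it suffices to construct $r_a,r_r$ on the right half, i.e. on $(\eta_l,+\infty)$ and the lens region $U_1^{(3)}$ adjacent to it. The left half is then defined by $r_a(k):=r_a^*(-k)$ and $r_r(k):=\overline{r_r(-k)}$, which gives item (b) automatically. On $(\eta_l,\infty)$, note that $\eta_l>c_l$ in the half of $\mathcal{T}_{\textup{\uppercase\expandafter{\romannumeral1}}}$ considered, but $\eta_l-c_l=\mathcal{O}(t^{-2/3})$. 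Hence $r$ is $\mathcal{C}^8$ on $(c_l,\infty)$ with the expansion \eqref{expansionrk} in powers of $(k-c_l)^{1/2}$. Because of this, we work in the variable $k$ but treat the point $\eta_l$ with uniform constants in $\xi$. The bounds only require Taylor data of $r$ at $\eta_l$ up to a fixed order, and these are uniformly bounded since $r\in\mathcal{C}^8$ near $c_l$ from the right once we use the half-integer expansion.

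\textbf{Step 1: rational part.} We take a rational function $f_0(k)=\sum_{j=0}^{n} a_j (k-\eta_l)^j/(k+i)^{n+5}$ (with $n$ of order $3$–$4$). Its coefficients $a_j=a_j(\xi)$ are chosen so that $f_0$ matches $r$ at $\eta_l$ up to order $n$; the coefficients depend continuously and boundedly on $\xi$. We also arrange that $f_0-r$ vanishes to high order at $\eta_l$ and decays like $k^{-5}$ at infinity, using $\partial_k^m r=\mathcal{O}(k^{-5})$. The function $f_0$ is holomorphic in $U_1^{(3)}$ and satisfies $|f_0(k)-r(\eta_l)|\lesssim|k-\eta_l|$ and $|f_0|\lesssim(1+|k|^2)^{-1}$.

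\textbf{Step 2: Fourier splitting of the remainder.} Write $h=r-f_0$. Since $g_{\textup{\uppercase\expandafter{\romannumeral1}}}$ is real and strictly increasing on $(\eta_l,\infty)$ (by \eqref{g_1'}), we change variables to $\phi=g_{\textup{\uppercase\expandafter{\romannumeral1}}}(k)$. The quotient $h(k)(k+i)^{\kappa}/(k-\eta_l)^{q}$ is then a function of $\phi$ lying in a Sobolev space $H^s$. This uses the smoothness of $r$ together with the order of vanishing from Step 1. Near $\eta_l$, $g'$ vanishes only to first order at $\eta_l$ (simple zero, as $\eta_l\neq c_l$). The Jacobian $dk/d\phi$ contains the factor $X_l/(12k(k-\eta_l)(k+\eta_l))$, which is compensated by the vanishing order $q$.

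Take the Fourier transform in $\phi$ and split it into $\int_{t/4}^{\infty}$ and $\int_{-\infty}^{t/4}$. The second piece extends holomorphically into the sector where $\im g_{\textup{\uppercase\expandafter{\romannumeral1}}}>0$. There it is bounded by $e^{\frac t4|\im g_{\textup{\uppercase\expandafter{\romannumeral1}}}|}$ times the weight, which yields (c) after multiplying back by $(k-\eta_l)^q/(k+i)^\kappa$. The first piece is bounded on the real axis in $L^\infty\cap L^2$ by $t^{-s}$ through Plancherel and Cauchy–Schwarz. Choosing $s\ge1$ gives $\|r_r\|_{L^p}=\mathcal{O}(t^{-1})$ for all $p$, which is item (d). Define $r_a=f_0+h_a$ and $r_r=h_r$, so that (a) holds.

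\textbf{Main obstacle.} The main obstacle is uniformity in $\xi\in\mathcal{T}_{\textup{\uppercase\expandafter{\romannumeral1}}}$ near $c_l$. As $t\to\infty$ the saddle $\eta_l$ merges with the branch point $c_l$, where $r$ is only a series in $(k-c_l)^{1/2}$ and $g_{\textup{\uppercase\expandafter{\romannumeral1}}}$ has a square-root singularity. Consequently the $H^s$ norm in the $\phi$ variable could blow up. We would handle this by keeping the lens $U_1^{(3)}$ and the interval of analysis starting at $\eta_l$ (not $c_l$), and by bounding the derivatives of $h$ on $[\eta_l,\infty)$ using the expansion \eqref{expansionrk}. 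The point is that $\partial_k^m h$ near $c_l^+$ grows at most like $(k-c_l)^{1/2-m}$. This growth is integrable against the weight $(k-\eta_l)^q$, since $k-c_l\ge k-\eta_l$ on the interval. If this fails, the fallback is to include finitely many terms $(k-c_l)^{m/2}$ (holomorphic off the cut) in $f_0$ rather than pure polynomials.
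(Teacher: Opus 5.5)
Your route is the one the paper itself invokes (it omits the proof and points to Lemma~5.2 of \cite{LenellsDNLS}). However, the step everything rests on, namely uniformity in $\xi\in\mathcal{T}_{\mathrm{I}}$, fails as you state it.

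By \eqref{expansionrk}, $r(k)=b_{l,0}+b_{l,1}(k-c_l)^{1/2}+\mathcal{O}(k-c_l)$ as $k\searrow c_l$, with $b_{l,1}\neq0$. So $r$ is \emph{not} $\mathcal{C}^1$ up to $c_l$ from the right. Since $0<\eta_l-c_l\lesssim t^{-2/3}$, the Taylor data you match in Step~1 satisfy $|r^{(j)}(\eta_l)|\asymp(\eta_l-c_l)^{1/2-j}\gtrsim t^{(2j-1)/3}$ for $j\geq1$. They are unbounded on $\mathcal{T}_{\mathrm{I}}$ even at fixed $t$ (let $\xi\to-c_l^2/2$). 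This has three consequences:
\begin{itemize}
\item the coefficients $a_j(\xi)$ of $f_0$ are unbounded;
\item the bound $|f_0(k)-r(\eta_l)|\lesssim|k-\eta_l|$ is not uniform, since $f_0'(\eta_l)=r'(\eta_l)$;
\item in Step~2, the quotient $h(k)(k+i)^{\kappa}/(k-\eta_l)^{q}$ has size $(\eta_l-c_l)^{1/2-q}$ where $|k-\eta_l|\sim\eta_l-c_l$, so its $H^s$ norm in $\phi$ is not uniformly bounded once $q\geq1$.
\end{itemize}
The remedy in your last paragraph does not help. The weight $(k-\eta_l)^q$ is divided out, not multiplied in, so it cannot absorb the growth $(k-c_l)^{1/2-m}$ of $\partial_k^m h$. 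A smaller slip: $F\in H^s$ only gives $\mathcal{O}(t^{1/2-s})$ for the $L^\infty$ norm of the high-frequency piece, so you need $s\geq 3/2$, not $s\geq1$.

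More fundamentally, no decomposition can give the Lipschitz bound in (c) with constants uniform on $\mathcal{T}_{\mathrm{I}}$. On $(\eta_l,\infty)$ one has $\operatorname{Im}g_{\mathrm{I}}=0$, so (a), (c), (d) would force $|r(k)-r(\eta_l)|\lesssim|k-\eta_l|+t^{-1}$ there. But take $\xi$ with $\eta_l-c_l=\alpha t^{-2/3}$ ($\alpha>0$ small and fixed) and $k=\eta_l+t^{-2/3}$. Then the left side is $|b_{l,1}|(\sqrt{1+\alpha}-\sqrt{\alpha})\,t^{-1/3}+\mathcal{O}(t^{-2/3})$, while the right side is $\mathcal{O}(t^{-2/3})$. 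What is uniformly attainable is the H\"older bound $|r_a(k)-r(\pm\eta_l)|\lesssim|k\mp\eta_l|^{1/2}e^{\frac t4|\operatorname{Im}g_{\mathrm{I}}(k)|}$. This is what Proposition~\ref{analytic extension of r RIII} states in $\mathcal{T}_{\mathrm{II}}$, and what Subsection~\ref{subsubsec: local para pm cl} actually uses (there $|r_a(k)-r(c_l)|\lesssim|k-c_l|^{1/2}+\cdots$).

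Your fallback is the right tool for this weaker bound. Take $f_0=(k+i)^{-M}\sum_{m\leq N}\beta_m(k-c_l)^{m/2}$, with $\xi$-independent $\beta_m$ read off from the expansion at the fixed point $c_l$. Then $|f_0(k)-f_0(\eta_l)|\lesssim|k-\eta_l|^{1/2}$ in the lens. Granting, as you do, that \eqref{expansionrk} may be differentiated termwise, $h=r-f_0$ also satisfies $|\partial_k^jh|\lesssim(k-c_l)^{(N+1)/2-j}$ uniformly. But $h$ now vanishes at $c_l$, not at $\eta_l$, so the Deift--Zhou step must be modified, for instance as follows.
\begin{itemize}
\item Put $(1-\chi_\delta)h$ into $r_r$. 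It is supported in $(\eta_l,\eta_l+2\delta)$ with $\delta\asymp t^{-2/3}$, and has size $\lesssim\delta^{(N+1)/2}$.
\item Fourier-split $\chi_\delta h=\frac{k-\eta_l}{(k+i)^p}F(g_{\mathrm{I}}(k))$. Check that the negative powers of $\delta$ produced by $\chi_\delta'$ and by the Jacobian $1/g_{\mathrm{I}}'\asymp X_l(k)/(k-\eta_l)$ still leave $\|F\|_{H^2}$ bounded. A power count gives this for $N\geq6$, and \eqref{expansionrk} allows $N=7$.
\item Add $h(\eta_l)(\eta_l+i)^2/(k+i)^2=\mathcal{O}(t^{-(N+1)/3})$ to $r_a$ and subtract it from $r_r$, so that $r_a(\eta_l)=r(\eta_l)$ exactly.
\end{itemize}
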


\begin{figure}
\begin{center}
\tikzset{every picture/.style={line width=0.75pt}}
\begin{tikzpicture}[x=0.75pt,y=0.75pt,yscale=-1,xscale=1]
%uncomment if require: \path (0,300);

% 实轴：三段合并为一条，在7个位置放置 latex 箭头
% 箭头位置由原始贝塞尔偏移量反推（总长度 61→591，共530单位）
\draw[
  postaction={decorate, decoration={markings,
    mark=at position 0.093 with {\arrow{latex}},   % x≈110，最左段中点
    mark=at position 0.274 with {\arrow{latex}},   % x≈206，-η_l 左侧
    mark=at position 0.387 with {\arrow{latex}},   % x≈266，-c_l 与 -c_r 之间
    mark=at position 0.511 with {\arrow{latex}},   % x≈332，-c_r 与 c_r 之间
    mark=at position 0.636 with {\arrow{latex}},   % x≈398，c_r 与 c_l 之间
    mark=at position 0.749 with {\arrow{latex}},   % x≈458，c_l 与 η_l 之间
    mark=at position 0.930 with {\arrow{latex}}    % x≈554，最右段中点
  }}
] (61,151) -- (591,151);

% 右下对角线：箭头方向与路径方向相同，从 (481,152) → (588,222)
\draw[
  postaction={decorate, decoration={markings,
    mark=at position 0.5 with {\arrow{latex}}
  }}
] (481,152) -- (588,222);

% 右上对角线：原路径 (582,86)--(481,152)，箭头指向 (582,86)
% 故将路径反转，以使 \arrow{latex} 方向正确
\draw[
  postaction={decorate, decoration={markings,
    mark=at position 0.5 with {\arrow{latex}}
  }}
] (481,152) -- (582,86);

% 左上对角线：原路径 (171,150)--(64.26,79.61)，箭头指向 (171,150)
% 故将路径反转，以使 \arrow{latex} 方向正确
\draw[
  postaction={decorate, decoration={markings,
    mark=at position 0.5 with {\arrow{latex}}
  }}
] (64.26,79.61) -- (171,150);

% 左下对角线：箭头方向与路径方向相同，从 (69.76,215.63) → (171,150)
\draw[
  postaction={decorate, decoration={markings,
    mark=at position 0.5 with {\arrow{latex}}
  }}
] (69.76,215.63) -- (171,150);

% 保留原始注释掉的线段
%\draw    (323,51) -- (481,152) ;
%\draw [shift={(407.06,104.73)}, rotate = 212.59] ...
%\draw    (481,152) -- (332,249) ;
%\draw    (171,150) -- (323,51) ;

% Text Node
\draw (477,156.4) node [anchor=north west][inner sep=0.75pt]  [font=\tiny]  {\footnotesize$\eta_l$};
\fill (481,151) circle (1.2pt);
% Text Node
\draw (162,156.4) node [anchor=north west][inner sep=0.75pt]  [font=\tiny]  {\footnotesize$-\eta_l$};
\fill (170,151) circle (1.2pt);
% Text Node
\draw (227,156.4) node [anchor=north west][inner sep=0.75pt]  [font=\tiny]  {\footnotesize$-c_{l}$};
\fill (241,151) circle (1.2pt);
% Text Node
\draw (408,157.4) node [anchor=north west][inner sep=0.75pt]  [font=\tiny]  {\footnotesize$c_{l}$};
\fill (412,151) circle (1.2pt);
\draw (358,157.4) node [anchor=north west][inner sep=0.75pt]  [font=\tiny]  {\footnotesize$c_{r}$};
\fill (362,151) circle (1.2pt);
\draw (277,156.4) node [anchor=north west][inner sep=0.75pt]  [font=\tiny]  {\footnotesize$-c_{r}$};
\fill (284,151) circle (1.2pt);
% Text Node
\draw (541,72.4) node [anchor=north west][inner sep=0.75pt]  [font=\tiny]  {\footnotesize$\Gamma _{1}^{( 3)}$};
% Text Node
\draw (549,215.4) node [anchor=north west][inner sep=0.75pt]  [font=\tiny]  {\footnotesize$\Gamma _{1}^{( 3) *}$};
% Text Node
\draw (106,79.4) node [anchor=north west][inner sep=0.75pt]  [font=\tiny]  {\footnotesize$\Gamma _{2}^{( 3)}$};
% Text Node
\draw (107,191.4) node [anchor=north west][inner sep=0.75pt]  [font=\tiny]  {\footnotesize$\Gamma _{2}^{( 3) *}$};
% Text Node
\draw (543,114.4) node [anchor=north west][inner sep=0.75pt]  [font=\tiny]  {\footnotesize$U_{1}^{( 3)}$};
% Text Node
\draw (546,162.4) node [anchor=north west][inner sep=0.75pt]  [font=\tiny]  {\footnotesize$U_{1}^{( 3) *}$};
% Text Node
%\draw (314,98.4) node [anchor=north west][inner sep=0.75pt]  [font=\tiny]  {$U_{2}^{( 3)}$};
% Text Node
\draw (85,112.4) node [anchor=north west][inner sep=0.75pt]  [font=\tiny]  {\footnotesize$U_{2}^{( 3)}$};
% Text Node
\draw (85,163.4) node [anchor=north west][inner sep=0.75pt]  [font=\tiny]  {\footnotesize$U_{2}^{( 3) *}$};
\end{tikzpicture}
\caption{The jump contours $ \Gamma^{(3)}$ and regions $U^{(3)}_j,U^{(3)*}_j,\;j=1,2$ of RH problem for $ M^{(3)}$ when $\xi\in\mathcal{T}_\textup{\uppercase\expandafter{\romannumeral1}}$.}\label{fig:U_j domain RI}
\end{center}
\end{figure}

\subsubsection{RH problem for $ M^{(3)}$}
Now we are ready to introduce a transformation
\begin{equation}\label{def:M3 RI}
     M^{(3)}(k)=M^{(3)}(k;x,t):=M^{(2)}(k)D_{\textup{\uppercase\expandafter{\romannumeral1}}}^{\sigma_3}(k)G(k)D_{\textup{\uppercase\expandafter{\romannumeral1}}}^{-\sigma_3}(k),
\end{equation}
where
\begin{equation*}
    G(k):=
        \begin{cases}
        \begin{pmatrix}1 & 0 \\ -r_a(k)e^{2itg_{\textup{\uppercase\expandafter{\romannumeral1}}}(k)} & 1\end{pmatrix}, &k\in U^{(3)}_1\cup U^{(3)}_2, \\
        \begin{pmatrix}1 & -r_a^{*}(k)e^{-2itg_\textup{\uppercase\expandafter{\romannumeral1}}(k)} \\ 0 & 1\end{pmatrix}, &k\in U_1^{(3)*}\cup U_2^{(3)*}, \\
        I,  &\textnormal{elsewhere}.
        \end{cases}
\end{equation*}
Here, the domains $U^{(3)}_j$ for $j=1,2$ are illustrated in Figure \ref{fig:U_j domain RI}.

Then RH problem for $ M^{(3)}$ reads as follows:
\begin{RHP}
\hfill
\begin{itemize}
    \item $M^{(3)}(k)$ is holomorphic for $k\in\mathbb{C}\setminus\Gamma^{(3)}$, where $ \Gamma^{(3)}:=\cup_{j=1}^{2}(\Gamma^{(3)}_j\cup\Gamma_j^{(3)*})\cup\mathbb{R}$; see Figure \textup{\ref{fig:U_j domain RI}} for an illustration.
    \item $M^{(3)}(k)$ has continuous boundary values $M_{\pm}^{(3)}(k)$ on $k\in \Gamma^{(3)}$ with the jump condition
    \begin{equation*}
        M^{(3)}_{+}(k)= M^{(3)}_{-}(k)V^{(3)}(k),
    \end{equation*}
    where
    \begin{equation}\label{equ:jump V3 RI}
        V^{(3)}(k)=
            \begin{cases}
            \begin{pmatrix} 1 & 0\\ D_{\textup{\uppercase\expandafter{\romannumeral1}}}^{-2}(k)r_a(k)e^{2itg_\textup{\uppercase\expandafter{\romannumeral1}}(k)} & 1\end{pmatrix}, &k\in\Gamma^{(3)}_1\cup\Gamma^{(3)}_2,  \\
            \begin{pmatrix} 1 & -D_{\textup{\uppercase\expandafter{\romannumeral1}}}^{2}(k)r_a^*(k)e^{-2itg_\textup{\uppercase\expandafter{\romannumeral1}}(k)}\\ 0 & 1\end{pmatrix}, & k\in\Gamma^{(3)*}_1\cup\Gamma^{(3)*}_2,  \\

            \begin{pmatrix}
                1-r_r(k)r^*_r(k)&-D_\textup{\uppercase\expandafter{\romannumeral1}}^2(k)r_r^*(k)e^{-2itg_\textup{\uppercase\expandafter{\romannumeral1}}(k)}\\D_\textup{\uppercase\expandafter{\romannumeral1}}^{-2}(k)r_r(k)e^{2itg_\textup{\uppercase\expandafter{\romannumeral1}}(k)}&1
            \end{pmatrix},&k\in(-\infty,-\eta_l)\cup(\eta_l,+\infty),\\
            \begin{pmatrix}1-r(k)r^*(k) & -D_{\textup{\uppercase\expandafter{\romannumeral1}}}^2(k)r^*(k)e^{-2itg_\textup{\uppercase\expandafter{\romannumeral1}}(k)}\\ D_{\textup{\uppercase\expandafter{\romannumeral1}}}^{-2}(k)r(k)e^{2itg_\textup{\uppercase\expandafter{\romannumeral1}}(k)} & 1\end{pmatrix},&k\in(-\eta_l,-c_l)\cup(c_l,\eta_l),\\
            \begin{pmatrix}
0&-1\\1&D_{\textup{\uppercase\expandafter{\romannumeral1}},+}(k)D_{\textup{\uppercase\expandafter{\romannumeral1}},-}^{-1}(k)e^{-2itg_{\textup{\uppercase\expandafter{\romannumeral1}},+}(k)}\\
            \end{pmatrix},&k\in(-c_l,-c_r)\cup(c_r,c_l),\\
            \begin{pmatrix}0 & -1 \\ 1 & 0\end{pmatrix},  & k\in (-c_r,c_r).
            \end{cases}
    \end{equation}
    \item As $k\rightarrow\infty$ in $\mathbb{C}\setminus\Gamma^{(3)}$, we have $ M^{(3)}(k)=I+\mathcal{O}(k^{-1})$.
   % \item As $k\rightarrow\pm c_l$, we have $ M^{(3)}(k)=\mathcal{O}\left((k\mp c_l)^{-1/4}\right)$.
\end{itemize}
\end{RHP}
\subsection{Analysis of RH problem for $M^{(3)}$}
It is readily seen that $V^{(3)} \to I$ exponentially
on the contours $\Gamma^{(3)}_j \cup \Gamma_j^{(3)*}$ for $j=1,2$ as $t \to +\infty$ except around the points $\pm c_l$.
Item (d) of Proposition \ref{analytic extension of r RI} ensures that $V^{(3)}$ in \eqref{equ:jump V3 RI} decays to the identity matrix
on the intervals $(-\infty, -\eta_l)$ and $(\eta_l, +\infty)$ as $t\to+\infty$.
Both of these two facts lead us to consider the following global parametrix.
\subsubsection{Global parametrix}
%As $t$ large enough, the jump matrix $V^{(3)}$ approaches
%\begin{equation}\label{equ:jump Vinfty}
%    V^{(\infty)}=\begin{pmatrix} 0 & -1 \\ 1 & 0 \end{pmatrix}, \quad k\in(-c_l,c_l).
%\end{equation}
%For $k\in\mathbb{C}\backslash[-c_l,c_l]$, $V^{(3)}\rightarrow I$ as $t\rightarrow \infty$.
%Then the following parametrix for $M^{(\infty)}$ is naturally established.
For $\xi \in \mathcal{T}_\textup{\uppercase\expandafter{\romannumeral1}}$,  the global parametrix $M^{(\infty)}(k)$  satisfies the following  RH problem.

\begin{RHP}\label{RHP:Minfty RI}
\hfill
\begin{itemize}
    \item $M^{(\infty)}(k)$ is holomorphic for $k\in\mathbb{C}\setminus[-c_l,c_l]$.
    \item $M^{(\infty)}(k)$ has continuous boundary values $M_{\pm}^{(\infty)}(k)$ on $(-c_l,c_l)$ satisfying the following jump condition:
    \begin{align*}
        M_{+}^{(\infty)}(k)=M_{-}^{(\infty)}(k)V^{(\infty)}(k),
    \end{align*}
    where
    \begin{equation*}
        V^{(\infty)}(k)=\begin{pmatrix}
            0 & -1\\
            1 & 0
        \end{pmatrix}.
    \end{equation*}
    \item As $k\rightarrow\infty$ in $\mathbb{C}\setminus[-c_l, c_l]$, we have $M^{(\infty)}(k)=I+\mathcal{O}(k^{-1})$.
    \item As $k\rightarrow \pm c_l$, $M^{(\infty)}(k)=\mathcal{O}((k\mp c_l)^{-1/4})$.
\end{itemize}
\end{RHP}
It follows from the direct calculation that the unique solution to $M^{(\infty)}(k)$ is given by
\begin{align}\label{equ:sol of Minfty}
    M^{(\infty)}(k)=\Delta_l(k),
\end{align}
where $\Delta_{l}(k)$ is defined by \eqref{equ:Delta_j} for $j=l$.

\subsubsection{Local parametrices near $\pm c_l$}\label{subsubsec: local para pm cl}
Let
\begin{align}\label{def local}
    D_\varrho(c_l)=\left\{k: |k-c_l|<\varrho \right\}, \quad D_\varrho(-c_l)=\left\{k: |k+c_l|<\varrho \right\}
\end{align}
be two small disks around $c_l$ and $-c_l$ respectively, where
\begin{equation*}
\varrho<{\rm min}\left\{2 |\eta_l-c_l|t^{\epsilon}, \frac{1}{3}|\eta_l+c_l|,  \frac{1}{3} |\eta_l|  \right\}, \quad \frac{1}{2}<\epsilon<\frac{2}{3}.
\end{equation*}
For $j\in\{r,l\}$, we intend to solve the following local RH problem for $M^{(j)}$.
\begin{RHP}
\hfill
\begin{itemize}
    \item $M^{(j)}(k)$ is holomorphic for $k\in\overline{D_{\varrho}(\pm c_l)}\setminus\Gamma^{(j)}$ \textup{(}``$+$'' for $j=r$, and ``$-$'' for $j=l$\textup{)},
    where
    \begin{align*}
        \Gamma^{(r)}:=D_\varrho(c_l)\cap \Gamma^{(3)},\quad \Gamma^{(l)}:=D_\varrho(-c_l)\cap \Gamma^{(3)}.
    \end{align*}
    %see Figure \ref{fig:local jumps RI} for an illustration.
    \item  $M^{(j)}(k)$ has continuous boundary values $M_{\pm}^{(j)}(k)$ on $k\in \Gamma^{(j)}$ with the jump condition
    \begin{align*}
        M^{(j)}_{+}(k)=M^{(j)}_{-}(k)V^{(3)}(k)\big|_{\Gamma^{(j)}},
    \end{align*}
    where $V^{(3)}(k)$ is given by \eqref{equ:jump V3 RI}.
    \item As $k\rightarrow\infty$ in $\mathbb{C}\setminus\Gamma^{(j)}$, we have $M^{(j)}(k)=I+\mathcal{O}(k^{-1})$.
\end{itemize}
\end{RHP}
The solution $M^{(j)}$ for this local RH problem can be
constructed by the Painlev\'e XXXIV parametrix shown in Appendix
\ref{p34} in a standard manner. In the rest part of this section, we focus on the construction of $M^{(r)}$ near $k=c_l$, and the construction of $M^{(l)}$ near $k=-c_l$ can be obtained in a similar way.

First, we introduce the following change of variables. Recall the asymptotics of $g_{\textup{\uppercase\expandafter{\romannumeral1}}}$ in \eqref{g_I asy cl} that
\begin{equation*}
     g_{\textup{\uppercase\expandafter{\romannumeral1}}}(k)=g_{\textup{\uppercase\expandafter{\romannumeral1}}}^{(1)}(c_l)(k- c_l)^{\frac{1}{2}}+g_{\textup{\uppercase\expandafter{\romannumeral1}}}^{(2)}(c_l)(k- c_l)^{\frac{3}{2}}+\mathcal{O}((k- c_l)^{\frac{5}{2}}),\quad k\to c_l,
\end{equation*}
where
\begin{equation}\label{equ:coefficients of gI}
    g_{\textup{\uppercase\expandafter{\romannumeral1}}}^{(1)}(c_l)=6(2c_l)^{\frac{1}{2}}(c_l^2+2\xi),\quad g_{\textup{\uppercase\expandafter{\romannumeral1}}}^{(2)}(c_l)=3(2c_l)^{-\frac{1}{2}}(c_l^2+2\xi)+4(2c_l)^{\frac{3}{2}}.
\end{equation}

For $(k;\xi)\in \left( D_{\varrho}(c_l)\setminus(-\infty,c_l] \right)\times \mathcal{T}_\textup{\uppercase\expandafter{\romannumeral1}}$, it can be readily seen that $g_{\textup{\uppercase\expandafter{\romannumeral1}}}^{(1)}(c_l)=\mathcal{O}(t^{-2/3})$, thus we have
\begin{equation}
    \mathrm{Im}  g_{\textup{\uppercase\expandafter{\romannumeral1}}}(k)\lesssim \pm t^{-2/3}\left|k- c_l\right|^{\frac{1}{2}},\quad k\in \left(D_{\varrho}(c_l)\setminus(-\infty,c_l]\right)\cap\mathbb{C}_{\pm}.
\end{equation}

For $k \in D_{\varrho}(c_l)\setminus(-\infty,c_l]$, we set
\begin{equation}\label{vira cha 1}
    \zeta_r(k) :=-\left(\frac{3}{2}\right)^{\frac{2}{3}}t^{\frac{2}{3}}\left[g_{\textup{\uppercase\expandafter{\romannumeral1}}}(k)-g_{\textup{\uppercase\expandafter{\romannumeral1}}}^{(1)}(c_l)(k-c_l)^{\frac{1}{2}}\right]^{\frac{2}{3}},
\end{equation}
where the cut $(\cdot)^{\frac{2}{3}}$ runs along $\mathbb{R}^{-}$. This is a one-to-one conformal mapping form $k$-plane to $\zeta_r$-plane and  $\zeta_r^\prime(c_l)=-\left(\frac{3}{2}\right)^{\frac{2}{3}}t^{\frac{2}{3}}\left|g_{\textup{\uppercase\expandafter{\romannumeral1}}}^{(2)}(c_l)\right|^{\frac{2}{3}}<0$.
Moreover, we have
\begin{equation}\label{asy zeta1/2}
    \zeta_r(k)^{\frac{1}{2}}=\begin{cases}
    -it^{\frac{1}{3}}(\frac{3}{2})^{\frac{1}{3}}\left|g_{\textup{\uppercase\expandafter{\romannumeral1}}}^{(2)}(c_l)\right|^{\frac{1}{3}}(k-c_l)^{\frac{1}{2}}\left(1+\mathcal{O}(k-c_l)\right),&k \to c_l,\ k \in \mathbb{C}^+,\\
    it^{\frac{1}{3}}(\frac{3}{2})^{\frac{1}{3}}\left|g_{\textup{\uppercase\expandafter{\romannumeral1}}}^{(2)}(c_l)\right|^{\frac{1}{3}}(k-c_l)^{\frac{1}{2}}\left(1+\mathcal{O}(k-c_l)\right),&k \to c_l,\ k \in \mathbb{C}^-,
\end{cases}
\end{equation}
as well as
\begin{equation}\label{asy zeta 3/2}
    \zeta_r(k)^{\frac{3}{2}}=\begin{cases}
    \frac{3i}{2}tg_{\textup{\uppercase\expandafter{\romannumeral1}}}^{(2)}(c_l)(k-c_l)^{\frac{3}{2}}\left(1+\mathcal{O}(k-c_l)\right),&k \to c_l,\ k \in \mathbb{C}^+,\\
    -\frac{3i}{2}tg_{\textup{\uppercase\expandafter{\romannumeral1}}}^{(2)}(c_l)(k-c_l)^{\frac{3}{2}}\left(1+\mathcal{O}(k-c_l)\right),&k \to c_l,\ k \in \mathbb{C}^-.
\end{cases}
\end{equation}
Then we define
\begin{equation*}
   S(k)= S(k;\xi):=\begin{cases}
        it\frac{g_{\textup{\uppercase\expandafter{\romannumeral1}}}^{(1)}(c_l)(k-c_l)^{\frac{1}{2}}}{\zeta_r(k)^{\frac{1}{2}}}, &k\in D_{\varrho}(c_l)\cap\mathbb{C}^+,\\
       - it\frac{g_{\textup{\uppercase\expandafter{\romannumeral1}}}^{(1)}(c_l)(k-c_l)^{\frac{1}{2}}}{\zeta_r(k)^{\frac{1}{2}}}, &k\in D_{\varrho}(c_l)\cap\mathbb{C}^-.
    \end{cases}
\end{equation*}
From \eqref{asy zeta1/2} we know that $S(k)$ is analytic in $D_{\varrho}(c_l)$ and define
\begin{equation}\label{defS}
    s:=S(c_l)=-t^{\frac{2}{3}}\frac{g_{\textup{\uppercase\expandafter{\romannumeral1}}}^{(1)}(c_l)}{\left(\frac{3}{2}\right)^{\frac{1}{3}}\left|g_{\textup{\uppercase\expandafter{\romannumeral1}}}^{(2)}(c_l)\right|^{\frac{1}{3}}}.
\end{equation}
It can be concluded from \eqref{asy zeta1/2}, \eqref{asy zeta 3/2} and \eqref{defS} that
\begin{equation*}
    \frac{4}{3}\zeta_r(k)^{\frac{3}{2}}+2S(k)\zeta_r(k)^{\frac{1}{2}}=\begin{cases}
        2itg_{\textup{\uppercase\expandafter{\romannumeral1}}}(k), &k\in D_{\varrho}(c_l)\cap\mathbb{C}^+,\\
        -2itg_{\textup{\uppercase\expandafter{\romannumeral1}}}(k), &k\in D_{\varrho}(c_l)\cap\mathbb{C}^-.
    \end{cases}
\end{equation*}
For $k \in D_{\varrho}(-c_l)\setminus[-c_l,+\infty)$, we set
\begin{equation}\label{vira cha 2}
    \zeta_l(k)=-\left(\frac{3}{2}\right)^{\frac{2}{3}}t^{\frac{2}{3}}\left[g_{\textup{\uppercase\expandafter{\romannumeral1}}}(k)-g_{\textup{\uppercase\expandafter{\romannumeral1}}}^{(1)}(c_l)(-k-c_l)^{\frac{1}{2}}\right]^{\frac{2}{3}},
\end{equation}
which is a one-to-one conformal mapping and $\zeta_l^\prime(-c_l)=\left(\frac{3}{2}\right)^{\frac{2}{3}}t^{\frac{2}{3}}\left|g_{\textup{\uppercase\expandafter{\romannumeral1}}}^{(2)}(c_l)\right|^{\frac{2}{3}}>0$.

To match $M^{(r)}$ near $c_l$, the change of variable from $k$-plane to $\zeta_r$-plane in \eqref{vira cha 1} inspires us to consider the problem in $\zeta_r$-plane which maps $c_l$ to the origin.
Let
    $\widehat D_{\varrho}^{(r)}:=\zeta_r(D_{\varrho}(c_l)),\
    \widehat\Gamma^{(r)}_1:=\zeta_r(\Gamma^{(3)*}_1)$, and  $\widehat \Gamma^{(r)}:=\zeta_r(\Gamma^{(r)})$.

Therefore, we can define by $\widehat M^{(r)}(\zeta_r):=M^{(r)}(k(\zeta_r))$ with the jump condition $\widehat M^{(r)}_+(\zeta_r)=\widehat M^{(r)}_-(\zeta_r)\widehat V^{(3)}(\zeta_r)$ on $\widehat\Gamma^{(r)}$, where
 \begin{equation}\label{equ:jump zetar}
      \widehat V^{(3)}(\zeta_r)=
            \begin{cases}
             \begin{pmatrix} 1 & D_{\textup{\uppercase\expandafter{\romannumeral1}}}^{2}(k(\zeta_r))r_a^*(k(\zeta_r))e^{-2itg_\textup{\uppercase\expandafter{\romannumeral1}}(k(\zeta_r))}\\ 0 & 1\end{pmatrix}, & \zeta_r\in\widehat\Gamma^{(r)}_1,  \\
            \begin{pmatrix} 1 & 0\\ -D_{\textup{\uppercase\expandafter{\romannumeral1}}}^{-2}(k(\zeta_r))r_a(k(\zeta_r))e^{2itg_\textup{\uppercase\expandafter{\romannumeral1}}(k(\zeta_r))} & 1\end{pmatrix}, &\zeta_r\in\widehat\Gamma^{(r)*}_1,  \\

            \begin{pmatrix}
               1&D_\textup{\uppercase\expandafter{\romannumeral1}}^2(k(\zeta_r))r_r^*(k(\zeta_r))e^{-2itg_\textup{\uppercase\expandafter{\romannumeral1}}(k(\zeta_r))}\\-D_\textup{\uppercase\expandafter{\romannumeral1}}^{-2}(k(\zeta_r))r_r(k(\zeta_r))e^{2itg_\textup{\uppercase\expandafter{\romannumeral1}}(k(\zeta_r))}& 1-r_r(k(\zeta_r))r^*_r(k(\zeta_r))
            \end{pmatrix},&\zeta_r\in(-\varrho,\zeta_r(\eta_l)),\\
            \begin{pmatrix}1 & D_{\textup{\uppercase\expandafter{\romannumeral1}}}^2(k(\zeta_r))r^*(k(\zeta_r))e^{-2itg_\textup{\uppercase\expandafter{\romannumeral1}}(k(\zeta_r))}\\ -D_{\textup{\uppercase\expandafter{\romannumeral1}}}^{-2}(k(\zeta_r))r(k(\zeta_r))e^{2itg_\textup{\uppercase\expandafter{\romannumeral1}}(k(\zeta_r))} & 1-r(k(\zeta_r))r^*(k(\zeta_r))\end{pmatrix},&\zeta_r\in(\zeta_r(\eta_l),0),\\
            \begin{pmatrix}
D_{\textup{\uppercase\expandafter{\romannumeral1}},+}(k(\zeta_r))D_{\textup{\uppercase\expandafter{\romannumeral1}},-}^{-1}(k(\zeta_r))e^{-2itg_{\textup{\uppercase\expandafter{\romannumeral1}},+}(k(\zeta_r))}&1\\-1&0
            \end{pmatrix},&\zeta_r\in(0,\zeta_r(c_r)),\\
            \begin{pmatrix}0 & 1 \\ -1 & 0\end{pmatrix},  & \zeta_r\in (\zeta_r(c_r),\varrho);
            \end{cases}
    \end{equation}
see Figure \ref{Fig14}.
\begin{figure}
 \begin{center}
   \begin{tikzpicture}[scale=1.2]
    \draw[dotted,thick](-3,0) circle (2);

        \draw[](-3.6,0)--(-2.4,0);
        \draw[-latex](-4.6,0)--(-3.6,0);
      \draw[-latex](-3,0)--(-2.2,0);
        \draw[-latex](-2.2,0)--(-1.3,0);
        \node[shape=circle,fill=black, scale=0.13]  at (-3,0){0};
        \node[below] at (-3,0) {\footnotesize $c_l$};
        \node[shape=circle,fill=black, scale=0.13]  at (-4.3,0){0};
        \node[below] at (-4.33,0) {\footnotesize $c_r$};
        \node[shape=circle,fill=black, scale=0.13]  at (-2,0){0};
        \node[below] at (-2,0) {\footnotesize $\eta_l$};
        \draw[thick](-2,0)--(-1.268,1);
       %  \draw[thick](-2.4,0)--(-1,0.8);
        \draw [-latex] (-2,0)--(-1.5,0.67);
        \draw[thick](-2,0)--(-1.268,-1);
        \draw [-latex] (-2,0)--(-1.5,-0.67);
        \draw [thick] (-5, 0)--(-3.6, 0);
        \draw [thick] (-3.6, 0)--(-1, 0);

        \node  at (-1.8,0.86) {\scriptsize $\Gamma^{(3)}_{1}$};
\node  at (-1.9,-1) {\scriptsize $\Gamma^{(3)*}_{1}$};

        \node[]  at (-3,2.5) {\scriptsize $D_{\varrho}(c_l)$};
      %  \draw [dotted,-latex ]  (-1, 0)  to  [out=90,  in=0] (-3, 2);
    \draw[dotted,thick](4,0) circle (2.5);
        \draw[thick](1.8349,1.25)--(3,0);
        \draw [-latex] (1.8349,1.25)--(2.41745,0.625);
        \draw[thick](1.8349,-1.25)--(3,0);
        \draw [-latex] (1.8349,-1.25)--(2.41745,-0.625);
        \draw[thick ](3,0)--(5,0);
        \draw[-latex](3,0)--(3.6,0);
        \draw[-latex](1.5,0)--(2.5,0);
         \draw[-latex](3.8,0)--(4.8,0);
        \node[shape=circle,fill=black, scale=0.13]  at (3,0){0};
        \node[below] at (3.3,0) {\footnotesize $\zeta_r(\eta_l)$};
        \node[shape=circle,fill=black, scale=0.13]  at (4,0){0};
        \node[below] at (4,0) {\footnotesize $0$};
        \node[shape=circle,fill=black, scale=0.13]  at (5.3,0){0};
        \node[below] at (5.3,0) {\footnotesize $\zeta_r(c_r)$};

        \draw [thick] (1.5, 0)--(3, 0);
        \draw [thick] (5, 0)--(6.5, 0);

        \node  at (2.8,1.1) {\scriptsize $\widehat\Gamma^{(r)}_1$};
        \node  at (2.9,-1.1) {\scriptsize $\widehat\Gamma^{(r)*}_1$};

        \node[]  at (4,3) {\scriptsize $\widehat D_{\varrho}^{(r)}$};
        %\draw [dotted,-latex ]  (6.5, 0)  to  [out=90,  in=0] (4, 2.5);
    \node    at (-3, -3 )  {  $ k$-plane};
    \node    at (4, -3 )  {  $ \zeta_r$-plane};
 \draw [-latex] (-0.5,0)--(1,0);
    \end{tikzpicture}
    \caption{ {The   map  between  $D_{\varrho}(c_l)$  and $\widehat D_{\varrho}^{(r)}$.}  }
     \label{Fig14}
  \end{center}
\end{figure}

It follows from the definition of $D_{\varrho}(c_l)$ in \eqref{def local} that as $t\to\infty$, we have
\begin{equation*}
    k-c_l=\mathcal{O}(t^{-2/3+\varepsilon}),\quad k\in D_{\varrho}(c_l).
\end{equation*}
With respect to item (e) of Proposition \ref{prop: D function RI}, item (c) of Proposition \ref{analytic extension of r RI} and the asymptotics of $r(k)$ near $c_l$ in Proposition \ref{prop:a,b,r}, we have, for large $t$,
\begin{align*}
    D_{\textup{\uppercase\expandafter{\romannumeral1}}}(k)&=e^{\frac{i}{2}\arg b_{l,0}}+\mathcal{O}(t^{-1/3+\epsilon/2}),\\
    |r_a(k)-r(c_l)|&\lesssim|k-c_l|^{1/2}+|k-c_l|e^{\frac{t}{4}|\textup{Im} g_\textup{\uppercase\expandafter{\romannumeral1}}|}\\
    &\lesssim t^{-1/3+\epsilon/2}+t^{-2/3+\epsilon}e^{\frac{t}{4}|\textup{Im} g_\textup{\uppercase\expandafter{\romannumeral1}}|},\quad k\in D_{\varrho}(c_l)\cap\mathbb{C}^+,
\end{align*}
which implies that for $k\in D_{\varrho}(c_l)\cap\mathbb{C}^+$,
\begin{align}\label{asy1}
       | D_{\textup{\uppercase\expandafter{\romannumeral1}}}^{-2}(k)r_a(k)-D_{\textup{\uppercase\expandafter{\romannumeral1}}}^{-2}(c_l)r_a(c_l)|=|D_{\textup{\uppercase\expandafter{\romannumeral1}}}^{-2}(k)r_a(k)-1|&\lesssim
   t^{-1/3+\epsilon/2}+t^{-2/3+\epsilon}e^{\frac{t}{4}|\textup{Im} g_\textup{\uppercase\expandafter{\romannumeral1}}|}.
\end{align}

Therefore, RH problem $\widehat M^{(r)}(\zeta_r)$ can be approximated by the following RH problem $\widehat N^{(r)}(\zeta_r)$ which is associated with the Painlev\'e \textup{\uppercase\expandafter{\romannumeral34}} parametrix.
\begin{RHP}
\hfill
\begin{itemize}
    \item $\widehat N^{(r)}(\zeta_r)$ is holomorphic for $\zeta_r\in\mathbb{C}\setminus \left(\widehat\Gamma^{(r)}\setminus(-\infty,\zeta_r(\eta_l))\right)$.
    \item  $\widehat N^{(r)}(\zeta_r)$ has continuous boundary values $\widehat N^{(r)}_\pm(\zeta_r)$ on $\zeta_r\in \widehat\Gamma^{(r)}\setminus(-\infty,\zeta_r(\eta_l))$ with the jump condition
    \begin{align*}
        \widehat N^{(r)}_+(\zeta_r)=\widehat N^{(r)}_-(\zeta_r)\widehat V_N^{(r)}(\zeta_r),
    \end{align*}
    where
    \begin{equation}\label{equ:jumpVN}
      \widehat V_N^{(r)}(\zeta_r)=
            \begin{cases}
             \begin{pmatrix} 1 & e^{2\hat\theta(\zeta_r)}\\ 0 & 1\end{pmatrix}, & \zeta_r\in\widehat\Gamma^{(r)}_1,  \\
            \begin{pmatrix} 1 & 0\\ -e^{2\hat\theta(\zeta_r)} & 1\end{pmatrix}, &\zeta_r\in\widehat\Gamma^{(r)*}_1,  \\

           \begin{pmatrix} 1 & 0\\ -e^{2\hat\theta(\zeta_r)} & 1\end{pmatrix} \begin{pmatrix} 1 & e^{2\hat\theta(\zeta_r)}\\ 0 & 1\end{pmatrix} ,&\zeta_r\in(\zeta_r(\eta_l),0),\\

            \begin{pmatrix}0 & 1 \\ -1 & 0\end{pmatrix},  & \zeta_r\in (0,\varrho)
            \end{cases}
    \end{equation}
    with
    \begin{equation}\label{hattheta}
        \hat \theta(\zeta_r)=\frac{2}{3}\zeta_r^{\frac{3}{2}}+s\zeta_r^{\frac{1}{2}}.
    \end{equation}
    %\item As $\zeta_r\rightarrow\infty$ in $\mathbb{C}\setminus \left(\widehat\Gamma^{(3,r)}\setminus(-\infty,\zeta_r(\eta_l))\right)$, we have $\widehat N^{(r)}(\zeta_r)=I+\mathcal{O}(\zeta_r^{-1})$.
\end{itemize}
\end{RHP}
Next the following transformation is used to remove the jump contours $\widehat\Gamma^{(r)}_1$ and $\widehat\Gamma^{(r)*}_1$ from the starting point $\zeta_r(\eta_l)$ to $0$.
Define a matrix function $G^{(r)}(\zeta_r)$ on complex plane $\zeta_r$ by
\begin{equation*}
    G^{(r)}(\zeta_r)=\begin{cases}
        \begin{pmatrix} 1 & -e^{2\hat\theta(\zeta_r)}\\ 0 & 1\end{pmatrix}, & \zeta_r\in\Omega_1^{(r)},\\
         \begin{pmatrix} 1 & 0\\ -e^{2\hat\theta(\zeta_r)} & 1\end{pmatrix}, &\zeta_r\in\Omega_1^{(r)*},\\
         I,&\text{elsewhere}.
    \end{cases}
\end{equation*}
Therefore, we can construct a new transformation
\begin{equation}\label{trans N to S}
    \widehat S^{(r)}(\zeta_r)=\widehat N^{(r)}(\zeta_r) G^{(r)}(\zeta_r),
\end{equation}
which implies that $\widehat S^{(r)}(\zeta_r)$ satisfies the following RH problem.
\begin{RHP}
\hfill
\begin{itemize}
    \item $\widehat S^{(r)}(\zeta_r)$ is holomorphic for $\zeta_r\in\mathbb{C}\setminus \cup_{j=1,2,4}\Sigma_j$, where $\Sigma_j$,  $j=1,2,4$ are shown in Figures \textup{\ref{figp34}} and \textup{\ref{Fig17}}.
    \item  $\widehat S^{(r)}(\zeta_r)$ has continuous boundary values $\widehat S^{(r)}_\pm(\zeta_r)$ on $\zeta_r\in \cup_{j=1,2,4}\Sigma_j$ with the jump condition
    \begin{align*}
        \widehat S^{(r)}_+(\zeta_r)=\widehat S^{(r)}_-(\zeta_r)\widehat V_S^{(r)}(\zeta_r),
    \end{align*}
    where
    \begin{equation*}
      \widehat V_S^{(r)}(\zeta_r)=
            \begin{cases}
             \begin{pmatrix}0 & 1 \\ -1 & 0\end{pmatrix},  & \zeta_r\in \Sigma_1\\
             \begin{pmatrix} 1 & e^{2\hat\theta(\zeta_r)}\\ 0 & 1\end{pmatrix}, & \zeta_r\in\Sigma_2,  \\
          %  \begin{pmatrix} 1 & 0\\ 0 & 1\end{pmatrix}, &\zeta_r\in\Sigma_3,  \\

           \begin{pmatrix} 1 & 0\\ -e^{2\hat\theta(\zeta_r)} & 1\end{pmatrix},&\zeta_r\in\Sigma_4.
            \end{cases}
    \end{equation*}
\end{itemize}
\end{RHP}

\begin{figure}[H]
\begin{center}
\begin{tikzpicture}[scale=1.6]

	%	\draw [dotted,-latex ](-6.8,0)--(-0.3,0);
			\draw [black ](-4.5,0)--(-1.5,0);
			\draw [black, -latex](-4.5,0)--(-4,0);
			\draw [black, -latex](-3.5,0)--(-2.3,0);
			\draw [ black, ](-4.5,0)--(-6.5,1);
			\draw [black,-latex ] (-6.5,1)--(-11/2,1/2);
			\draw [ black, ](-4.5,0)--(-6.5,-1);
			\draw [black,-latex ] (-6.5,-1)--(-11/2,-1/2);
		%	\draw [black,  ](-2.5,0)--(-0.5, 1 );
		%	\draw [black,-latex ] (-2.5,0)--(-1.5, 1/2);
		%	\draw[ black, ](-2.5,0)--(-0.5,-1);
		%	\draw [black,-latex ] (-2.5,0)--(-1.5,-1/2);

			\node[black]    at (-4.5,0)  {\scriptsize $\bullet$};
		
			\node    at (-5,1)  {\scriptsize $\Sigma_2$};
            \node    at (-5,-1)  {\scriptsize $\Sigma_4$};
             \node    at (-2,0.1)  {\scriptsize $\Sigma_1$};
			\node    at (-4.5,-0.2)  {\scriptsize $\zeta_r(\eta_l)$};

 \draw[dotted,thick,black](-5.5,-1)--(-3.5, 0 );
 \draw[dotted,thick, -latex,black  ](-5.5,-1)--(-4.5, -0.5 );
 %\draw[dotted,thick, -latex,black  ](-3.5,0)--(-2.5, 0.5 );
\draw[dotted,thick,black](-5.5,1)--(-3.5,0 );
  \draw[dotted,thick, -latex ,black ](-5.5, 1)--(-4.5, 0.5 );
   %\draw[dotted,thick, -latex,black   ](-3.5,0)--(-2.5, -0.5  );

 \node    at (-5.4, 0.7)  {\scriptsize $\Omega_1^{(r)}$};

  \node    at (-5.4,-0.7)  {\scriptsize $\Omega_1^{(r)*}$};

\node    at (-3.5,-0.2)  {\scriptsize $0$};
\node    at (-3.5,0)  {\scriptsize $\bullet$};
\end{tikzpicture}
\end{center}
\caption { Two auxiliary lines are added to the jump contour of $\widehat N^{(r)}(\zeta_r)$, which can be deformed into the Painlev\'e XXXIV model.}
 \label{Fig17}
\end{figure}
With the help of Painlev\'e XXXIV parametrix defined in Appendix \ref{p34}, the solution to the RH problem $\widehat S^{(r)}(\zeta_r)$ can be constructed by
\begin{equation}\label{equ:hat S trans}
    \widehat S^{(r)}(\zeta_r)=\begin{pmatrix}
		1 & 0\\
		i a(s) & 1
		\end{pmatrix}M^{P_{34}}(\zeta_r; 0, 0, s)e^{\hat\theta(\zeta_r)\sigma_3}Q^{(r)}(\zeta_r),
\end{equation}
where
\begin{equation}\label{def:Qr}
    Q^{(r)}(\zeta_r)=\begin{cases}
        \sigma_1,&\zeta_r\in\mathbb{C}^+,\\
        \sigma_3,&\zeta_r\in\mathbb{C}^-.
    \end{cases}
\end{equation}
Deduced from the asymptotics of $M^{P_{34}}(\zeta_r;  0,0,s)$ in \eqref{eq:Psi-infinity}, $\widehat S^{(r)}(\zeta_r)$ has the following asymptotics as $\zeta_r\to\infty$:
\begin{equation}\label{asy S}
    \widehat S^{(r)}(\zeta_r)=\left(I+\mathcal{O} \left( \zeta_r^{-1} \right) \right)
		\frac{\zeta_r^{-\frac{1}{4}\sigma_3}}{\sqrt{2}}
		\begin{pmatrix}
		1 & i
		\\
		i & 1
		\end{pmatrix}Q^{(r)}.
\end{equation}

Define $E^{(r)}(\zeta_r)=\widehat M^{(r)}(\zeta_r)\widehat N^{(r)}(\zeta_r)^{-1}$, then we can obtain the following RH problem for $E^{(r)}(\zeta_r)$.

\begin{RHP}
\hfill
\begin{itemize}
    \item $E^{(r)}(\zeta_r)$ is holomorphic for $k\in\mathbb{C}\setminus \widehat\Gamma^{(r)}$.
    \item  $E^{(r)}(\zeta_r)$ has continuous boundary values $E_\pm(\zeta_r)$ on $\widehat\Gamma^{(r)}$ with the jump condition
    \begin{align*}
        E^{(r)}_+(\zeta_r)=E^{(r)}_-(\zeta_r) V_E^{(r)}(\zeta_r),
    \end{align*}
    where
    \begin{equation*}
     V_E^{(r)}(\zeta_r)=\widehat N^{(r)}_-(\zeta_r)\widehat V^{(3)}(\zeta_r)\widehat V_N(\zeta_r)^{-1}\widehat N^{(r)}_-(\zeta_r)^{-1}.
    \end{equation*}

\end{itemize}
\end{RHP}
\begin{proposition}\label{pro:estimate E}
    For $\xi\in\mathcal{T}_{\textup{\uppercase\expandafter{\romannumeral1}}}$, as $t\to+\infty$, $E^{(r)}(\zeta_r)$ exists uniquely and satisfies
    \begin{equation}\label{esti for E}
\begin{aligned}
     \left\| E^{(r)}(\zeta_r)-I\right\|_{L^\infty\left(\widehat D^{(r)}_\varrho\right)}&=\mathcal{O}(t^{-\frac{1}{3}-\frac{\epsilon}{2}}),\\
        \left\| E^{(r)}(\zeta_r)-I\right\|_{L^1\left(\widehat D^{(r)}_\varrho\right)}&=\mathcal{O}(t^{-1-\frac{\epsilon}{2}}),\\
         \left\| E^{(r)}(\zeta_r)-I\right\|_{L^2\left(\widehat D^{(r)}_\varrho\right)}&=\mathcal{O}(t^{-\frac{2}{3}-\frac{\epsilon}{2}}).
\end{aligned}
    \end{equation}
\end{proposition}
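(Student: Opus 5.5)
The plan is the standard small-norm argument for the local error $E^{(r)}$. First I will control $V_E^{(r)}-I$ on each piece of $\widehat\Gamma^{(r)}$, and then invoke the Beals--Coifman theory for the singular integral equation $\mu=I+\mathcal{C}_{w}\mu$ with $w=V_E^{(r)}-I$. The solution is represented as $E^{(r)}(\zeta_r)=I+\frac{1}{2\pi i}\int_{\widehat\Gamma^{(r)}}\frac{\mu(y)(V_E^{(r)}(y)-I)}{y-\zeta_r}\dif y$. Existence and uniqueness follow once $\|V_E^{(r)}-I\|_{L^\infty}\to0$, since then $\|\mathcal{C}_w\|_{L^2\to L^2}\lesssim\|w\|_{L^\infty}<1$ for $t$ large. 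The $L^p$ bounds on $E^{(r)}-I$ then follow from $\|\mu-I\|_{L^2}\lesssim\|w\|_{L^2}$ together with Hölder's inequality, which gives bounds of the form $\|w\|_{L^p}+\|w\|_{L^2}^2$.

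The key step is the pointwise estimate of $V_E^{(r)}-I=\widehat N_-^{(r)}(\widehat V^{(3)}\widehat V_N^{-1}-I)(\widehat N_-^{(r)})^{-1}$. The factors $\widehat N^{(r)}_{\pm}$ are uniformly bounded on $\widehat D^{(r)}_\varrho$ away from the origin. This follows from the explicit construction \eqref{equ:hat S trans}, the boundedness of $G^{(r)}$ (note that $\re\hat\theta\le0$ on the relevant sectors), and the large-$\zeta_r$ behaviour \eqref{asy S}. Near $\zeta_r=0$ the factors carry at most a $|\zeta_r|^{-1/4}$ singularity, which is integrable and harmless for the $L^1$ and $L^2$ norms. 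I then compare entries contour by contour.

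On $\widehat\Gamma^{(r)}_1$, $\widehat\Gamma^{(r)*}_1$ and $(\zeta_r(\eta_l),0)$, the difference is governed by \eqref{asy1}, that is, by $D_{\textup{\uppercase\expandafter{\romannumeral1}}}^{\mp2}r_a-1$, multiplied by $e^{\pm2\hat\theta}$. Here I use the identity $\frac43\zeta_r^{3/2}+2S\zeta_r^{1/2}=\pm2itg_{\textup{\uppercase\expandafter{\romannumeral1}}}$ together with $S(k)=s+\mathcal{O}(k-c_l)$. Since $k-c_l=\mathcal{O}(t^{-2/3+\epsilon})$, the discrepancy between $S(k)$ and $s$ produces only a relative error $t^{1/3}|k-c_l|^{3/2}$, which is absorbed. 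The decay $e^{-c|\zeta_r|^{3/2}}$ then kills the factor $e^{\frac t4|\im g|}$ appearing in \eqref{asy1}. This leaves $|w(\zeta_r)|\lesssim t^{-1/3+\epsilon/2}|\zeta_r|^{?}e^{-c|\zeta_r|^{3/2}}$. After rescaling $\zeta_r\sim t^{2/3}(k-c_l)$, the supremum is attained on the outer part of the disk, and tracking the powers of $t$ should yield the stated $t^{-1/3-\epsilon/2}$. On the interval $(-\varrho,\zeta_r(\eta_l))$ I will use item (d) of Proposition \ref{analytic extension of r RI}, $\|r_r\|_{L^p}=\mathcal{O}(t^{-1})$, which is of smaller order. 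On $(0,\zeta_r(c_r))$ the $(1,1)$ entry $D_+D_-^{-1}e^{-2itg_+}$ is exponentially small in the interior. Near $0$ it is controlled by the $(c_l-k)^{1/2}$ expansion of $D_{\textup{\uppercase\expandafter{\romannumeral1}}}$ from item (e) of Proposition \ref{prop: D function RI}.

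The main obstacle is obtaining the precise exponents $-\frac13-\frac\epsilon2$, $-1-\frac\epsilon2$ and $-\frac23-\frac\epsilon2$ uniformly in $s$ over the bounded range of $\mathcal{T}_{\textup{\uppercase\expandafter{\romannumeral1}}}$. This requires carefully balancing the $|k-c_l|^{1/2}$ error from $r$ and $D_{\textup{\uppercase\expandafter{\romannumeral1}}}$ against the exponential decay of $e^{2\hat\theta}$. It also requires accounting for the lengths of the contour pieces inside $\widehat D^{(r)}_\varrho$: the $L^1$ and $L^2$ norms gain factors $t^{-2/3}$ and $t^{-1/3}$ relative to the $L^\infty$ norm. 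To handle this, I will first derive the $L^\infty$ bound of $w$ in the $k$-variable, and then convert it to the $L^1$ and $L^2$ bounds using the Jacobian $|\zeta_r'|\asymp t^{2/3}$ and the $|\zeta_r|^{-1/4}$ integrability at the origin. The bounds for $E^{(r)}-I$ then follow as described in the first paragraph.
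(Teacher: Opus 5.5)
Your outline matches the paper's proof: estimate $\widehat V^{(3)}(\widehat V_N^{(r)})^{-1}-I$ piece by piece, conjugate by $\widehat N^{(r)}_-$, then invoke small-norm theory. However, the step on the positive $\zeta_r$-axis, i.e.\ on $k\in(c_l-\varrho,c_l)$, fails.

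There the only nonzero entry of $\widehat V^{(3)}(\widehat V_N^{(r)})^{-1}-I$ is, up to sign, $f:=D_{\mathrm{I},+}D_{\mathrm{I},-}^{-1}e^{-2itg_{\mathrm{I},+}}$. This entry is \emph{not} small near $\zeta_r=0$. Indeed, $g_{\mathrm{I}}(c_l)=0$, and item (e) of Proposition \ref{prop: D function RI} gives $D_{\mathrm{I},\pm}(k)=e^{\frac{i}{2}\arg b_{l,0}}(1+\mathcal{O}(|k-c_l|^{1/2}))$. So the $(c_l-k)^{1/2}$ expansion you cite controls $D_{\mathrm{I},+}D_{\mathrm{I},-}^{-1}-1$, not $f$ itself.

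Concretely, for $0<\zeta_r\le 1$ one has $f=e^{-2\hat\theta(\zeta_r)}+o(1)$, with $e^{-2\hat\theta}$ bounded away from $0$ since $s$ is bounded. Hence on that interval $V_E^{(r)}-I$ equals, up to $o(1)$, $\pm e^{-2\hat\theta}$ times the nonzero nilpotent matrix $\widehat N^{(r)}_-\left(\begin{smallmatrix}0&1\\0&0\end{smallmatrix}\right)(\widehat N^{(r)}_-)^{-1}$. In the $\zeta_r$-variable, $\widehat N^{(r)}$ depends on $t$ only through the bounded parameter $s$, so this term has no decay in $t$. Thus $\|V_E^{(r)}-I\|_{L^\infty}\not\to 0$, and the small-norm argument cannot start.

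The monodromy at $k=c_l$ shows the same mismatch. In the $k$-orientation, the true jumps of $M^{(3)}$ on both sides of $c_l$ tend to $\left(\begin{smallmatrix}0&-1\\1&1\end{smallmatrix}\right)$, which is consistent with $M^{(3)}$ being bounded. The model with $\omega=0$ instead has $\left(\begin{smallmatrix}0&-1\\1&0\end{smallmatrix}\right)$ on the left.

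The paper's proof asserts $|f|\lesssim|k-c_l|^{1/2}e^{-t^{1/3}|k-c_l|^{1/2}}$, which fails as $k\to c_l$ for the same reason, so following the paper does not close the gap. The repair is to build $f$ into the model. Take $b=0$, $\omega=1$ in RH problem \ref{RHPp34}; for $b=0$ this is the Airy model problem shifted by $s$. Then $\widehat V_N^{(r)}=\left(\begin{smallmatrix}e^{-2\hat\theta}&1\\-1&0\end{smallmatrix}\right)$ on $(0,\varrho)$, and the discrepancy becomes $f-e^{-2\hat\theta}$. That difference is of size $|k-c_l|^{1/2}e^{-c|\zeta_r|^{3/2}}$ plus the $S(k)-s$ error, and your contour-by-contour estimate then applies.

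Two further points need attention.

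First, $\widehat N^{(r)}_\pm$ is not uniformly bounded on $\widehat D^{(r)}_\varrho$ away from $0$. By \eqref{asy S} it grows like $|\zeta_r|^{1/4}$, and $\widehat D^{(r)}_\varrho$ has radius $\asymp t^{2/3}\varrho$, which can be as large as $t^{\epsilon}$. This growth is absorbed by the exponential decay on the lenses and by $r_r=\mathcal{O}(t^{-1})$ on $(-\varrho,\zeta_r(\eta_l))$, but it has to be tracked.

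Second, nothing in your bookkeeping produces the extra factor $t^{-\epsilon/2}$. Since $|k-c_l|^{1/2}e^{-c|\zeta_r|^{3/2}}\asymp t^{-1/3}|\zeta_r|^{1/2}e^{-c|\zeta_r|^{3/2}}$, the supremum is attained at $|\zeta_r|\asymp 1$, not on the outer part of the disk. This gives only $\mathcal{O}(t^{-1/3})$ in $L^\infty$. The corresponding $t^{-1}$ and $t^{-2/3}$ bounds for $L^1$ and $L^2$ hold only when measured in $k$; in the $\zeta_r$-variable there is no such gain, so you must fix which variable the norms refer to. The paper obtains $t^{-\epsilon/2}$ by bounding the conjugation weight $|\zeta_r|^{-1/2}$ by its size at the rim $|\zeta_r|\asymp t^{\epsilon}$. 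To reach the stated exponents you would have to justify a gain of that kind, which is not automatic because the discrepancy is concentrated at $|\zeta_r|\asymp 1$.
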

\begin{proof}
From formulas of  $\widehat V^{(3)}(\zeta_r)$ in \eqref{equ:jump zetar} and $\widehat V_N^{(r)}(\zeta_r)$ in \eqref{equ:jumpVN}, it can be calculated that
    \begin{equation}
        \widehat V^{(3)}(\zeta_r)\widehat V_N^{(r)}(\zeta_r)^{-1}= \begin{cases}
             \begin{pmatrix} 1 & D_{\textup{\uppercase\expandafter{\romannumeral1}}}^{2}r_a^*e^{-2itg_\textup{\uppercase\expandafter{\romannumeral1}}}-e^{2\hat\theta}\\ 0 & 1\end{pmatrix}, & \zeta_r\in\widehat\Gamma^{(r)}_1,  \\
            \begin{pmatrix} 1 & 0\\ e^{2\hat\theta}-D_{\textup{\uppercase\expandafter{\romannumeral1}}}^{-2}r_ae^{2itg_\textup{\uppercase\expandafter{\romannumeral1}}} & 1\end{pmatrix}, &\zeta_r\in\widehat\Gamma^{(r)*}_1,  \\
          \begin{pmatrix}
               1&D_\textup{\uppercase\expandafter{\romannumeral1}}^2r_r^*e^{-2itg_\textup{\uppercase\expandafter{\romannumeral1}}}\\-D_\textup{\uppercase\expandafter{\romannumeral1}}^{-2}r_re^{2itg_\textup{\uppercase\expandafter{\romannumeral1}}}& 1-r_rr^*_r
            \end{pmatrix},&\zeta_r\in(-\varrho,\zeta_r(\eta_l)),\\

            \begin{pmatrix}1 & D_{\textup{\uppercase\expandafter{\romannumeral1}}}^2r^*e^{-2itg_\textup{\uppercase\expandafter{\romannumeral1}}}\\- D_{\textup{\uppercase\expandafter{\romannumeral1}}}^{-2}re^{2itg_\textup{\uppercase\expandafter{\romannumeral1}}} & 1-rr^*\end{pmatrix}\begin{pmatrix} 1 & -e^{2\hat\theta}\\ 0 & 1\end{pmatrix} \begin{pmatrix} 1 & 0\\ e^{2\hat\theta} & 1\end{pmatrix} ,&\zeta_r\in(\zeta_r(\eta_l),0),\\
            \begin{pmatrix}
1&D_{\textup{\uppercase\expandafter{\romannumeral1}},+}D_{\textup{\uppercase\expandafter{\romannumeral1}},-}^{-1}e^{-2itg_{\textup{\uppercase\expandafter{\romannumeral1}},+}}\\0&1
            \end{pmatrix},&\zeta_r\in(0,\zeta_r(c_r)),\\
          I,  & \text{elsewhere}.
            \end{cases}
    \end{equation}

For $\zeta_r\in\widehat\Gamma^{(r)}_1$, we have that
\begin{align*}
|D_{\textup{\uppercase\expandafter{\romannumeral1}}}^{2}r_a^*e^{-2itg_\textup{\uppercase\expandafter{\romannumeral1}}}-e^{2\hat\theta}|\leqslant\left|\left(D_{\textup{\uppercase\expandafter{\romannumeral1}}}^{2}r_a^*-1\right)e^{2\hat\theta}\right|+\left|D_{\textup{\uppercase\expandafter{\romannumeral1}}}^{2}r_a^*e^{2\hat\theta}\left(e^{2(S(k)-s)\zeta_r^{1/2}}-1\right)\right|.
\end{align*}
On one hand, from \eqref{asy1}, we have that
\begin{align*}
\left|\left(D_{\textup{\uppercase\expandafter{\romannumeral1}}}^{2}r_a^*-1\right)e^{2\hat\theta}\right|\lesssim|k-c_l|^{1/2}e^{-t^{1/3}|k-c_l|^{1/2}},
\end{align*}
uniformly for all $\zeta_r\in\widehat\Gamma^{(r)}_1$. Thus we can infer that for all $\xi\in\mathcal{T}_{\textup{\uppercase\expandafter{\romannumeral1}}}$,
\begin{equation}\label{equ:asy for1}
    \begin{aligned}
\left\|\left(D_{\textup{\uppercase\expandafter{\romannumeral1}}}^{2}r_a^*-1\right)e^{2\hat\theta}\right\|_{L^1\left(\widehat\Gamma^{(r)}_1\right)}&\lesssim\int_0^{\varrho}u^{1/2}e^{-t^{1/3}u^{1/2}}\dif u=\mathcal{O}(t^{-1}),\\
\left\|\left(D_{\textup{\uppercase\expandafter{\romannumeral1}}}^{2}r_a^*-1\right)e^{2\hat\theta}\right\|^2_{L^2\left(\widehat\Gamma^{(r)}_1\right)}&\lesssim\int_0^{\varrho}\left(u^{1/2}e^{-t^{1/3}u^{1/2}}\right)^2\dif u=\mathcal{O}(t^{-4/3}),\\
\left\|\left(D_{\textup{\uppercase\expandafter{\romannumeral1}}}^{2}r_a^*-1\right)e^{2\hat\theta}\right\|_{L^\infty\left(\widehat\Gamma^{(r)}_1\right)}&\lesssim\sup_{0\leqslant u\leqslant\varrho} u^{1/2}e^{-t^{1/3}u^{1/2}}=\mathcal{O}(t^{-1/3}).
\end{aligned}
\end{equation}
On the other hand, from the analyticity of $S(k)$ near $c_l$, we have
\begin{equation*}
    \left|e^{2(S(k)-s)\zeta_r^{1/2}}-1 \right|\lesssim|k-c_l|\zeta_r^{3/2},
\end{equation*}
which yields
\begin{equation}\label{equ:asy for2}
\begin{aligned}
     \left\|D_{\textup{\uppercase\expandafter{\romannumeral1}}}^{2}r_a^*e^{2\hat\theta}\left(e^{2(S(k)-s)\zeta_r^{1/2}}-1\right) \right\|_{L^\infty\left(\widehat\Gamma^{(r)}_1\right)}&=\mathcal{O}(t^{-2/3}),\\
      \left\|D_{\textup{\uppercase\expandafter{\romannumeral1}}}^{2}r_a^*e^{2\hat\theta}\left(e^{2(S(k)-s)\zeta_r^{1/2}}-1\right) \right\|_{L^1\left(\widehat\Gamma^{(r)}_1\right)}&=\mathcal{O}(t^{-4/3}),\\
       \left\|D_{\textup{\uppercase\expandafter{\romannumeral1}}}^{2}r_a^*e^{2\hat\theta}\left(e^{2(S(k)-s)\zeta_r^{1/2}}-1\right) \right\|^2_{L^2\left(\widehat\Gamma^{(r)}_1\right)}&=\mathcal{O}(t^{-2}).\\
\end{aligned}
\end{equation}
Recalling the asymptotics of $\widehat S^{(r)}$ from \eqref{asy S}, we have
\begin{align*}
    \widehat N^{(r)}(\zeta_r)=\mathcal{O}(\zeta_r^{-1/4}),\;\widehat N^{(r)}(\zeta_r)^{-1}=\mathcal{O}(\zeta_r^{-1/4}),\quad \zeta_r\to 0.
\end{align*}
Combining the results in \eqref{equ:asy for1} and \eqref{equ:asy for2} yields,  for $p=1,2,\infty$,
\begin{equation*}
    \begin{aligned}
       \left \|\widehat N^{(r)}_-(\zeta_r)\widehat V^{(3)}(\zeta_r)\widehat V_N^{(r)}(\zeta_r)^{-1} \widehat N^{(r)}_-(\zeta_r)^{-1}-I\right\|_{L^p\left(\widehat\Gamma^{(r)}_1\right)}&\lesssim\left\||\zeta_r|^{-1/2}\right\|_{L^p\left(\widehat\Gamma^{(r)}_1\right)}\left\|\widehat V^{(3)}(\zeta_r)\widehat V_N^{(r)}(\zeta_r)^{-1}- I\right\|_{L^p\left(\widehat\Gamma^{(r)}_1\right)}\\&\lesssim t^{-\epsilon/2}\left\|\widehat V^{(3)}(\zeta_r)\widehat V_N^{(r)}(\zeta_r)^{-1}- I\right\|_{L^p\left(\widehat\Gamma^{(r)}_1\right)}.
    \end{aligned}
\end{equation*}
A similar analysis applies to the estimates for $\zeta_r\in\widehat\Gamma^{(r)*}_1$.

For $\zeta_r\in(-\infty,\zeta_r(\eta_l))$, it can be obtained from item (d) of Proposition \ref{analytic extension of r RI} that
\begin{equation*}
    \begin{aligned}
       \left \|\widehat N^{(r)}_-(\zeta_r)\widehat V^{(3)}(\zeta_r)\widehat V_N^{(r)}(\zeta_r)^{-1}\widehat N^{(r)}_-(\zeta_r)^{-1}-I\right\|_{L^\infty\left(-\infty,\zeta_r(\eta_l)\right)}=\mathcal{O}(t^{-1-\epsilon/2}).
    \end{aligned}
\end{equation*}

For $\zeta_r\in(\zeta_r(\eta_l),0)$, the jump matrix $\widehat V^{(3)}(\zeta_r)\widehat V_N^{(r)}(\zeta_r)^{-1}$ can be decomposed as
\begin{equation*}
    \begin{aligned}
        \widehat V^{(3)}(\zeta_r)\widehat V_N^{(r)}(\zeta_r)^{-1}&= \begin{pmatrix} 1 & 0\\ -D_{\textup{\uppercase\expandafter{\romannumeral1}}}^{-2}re^{2itg_\textup{\uppercase\expandafter{\romannumeral1}}} & 1\end{pmatrix} \begin{pmatrix} 1 & D_{\textup{\uppercase\expandafter{\romannumeral1}}}^{2}r^*e^{-2itg_\textup{\uppercase\expandafter{\romannumeral1}}}\\ 0 & 1\end{pmatrix}\begin{pmatrix} 1 & -e^{2\hat\theta}\\ 0 & 1\end{pmatrix} \begin{pmatrix} 1 & 0\\ e^{2\hat\theta} & 1\end{pmatrix}\\
        &=\begin{pmatrix} 1 & 0\\ -D_{\textup{\uppercase\expandafter{\romannumeral1}}}^{-2}re^{2itg_\textup{\uppercase\expandafter{\romannumeral1}}} & 1\end{pmatrix} \begin{pmatrix} 1 & D_{\textup{\uppercase\expandafter{\romannumeral1}}}^{2}r^*e^{-2itg_\textup{\uppercase\expandafter{\romannumeral1}}}-e^{2\hat\theta}\\ 0 & 1\end{pmatrix} \begin{pmatrix} 1 & 0\\ e^{2\hat\theta} & 1\end{pmatrix}\\
        &=\begin{pmatrix} 1 & 0\\ e^{2\hat\theta}-D_{\textup{\uppercase\expandafter{\romannumeral1}}}^{-2}re^{2itg_\textup{\uppercase\expandafter{\romannumeral1}}} & 1\end{pmatrix}+\left(D_{\textup{\uppercase\expandafter{\romannumeral1}}}^{2}r^*e^{-2itg_\textup{\uppercase\expandafter{\romannumeral1}}}-e^{2\hat\theta}\right)\begin{pmatrix}e^{2\hat\theta}&1\\-D_{\textup{\uppercase\expandafter{\romannumeral1}}}^{-2}re^{2itg_\textup{\uppercase\expandafter{\romannumeral1}}+2\hat\theta}&-D_{\textup{\uppercase\expandafter{\romannumeral1}}}^{-2}re^{2itg_\textup{\uppercase\expandafter{\romannumeral1}}}
        \end{pmatrix}.
    \end{aligned}
\end{equation*}
Since $|D_{\textup{\uppercase\expandafter{\romannumeral1}}}^{-2}r-1|=\mathcal{O}(|k-c_l|^{1/2})=\mathcal{O}(t^{-1/3+\epsilon/2})$ and $|e^{2itg_\textup{\uppercase\expandafter{\romannumeral1}}}|=|e^{2\hat\theta}|=1$ for $\zeta_r\in(\zeta_r(\eta_l),0)$, a direct calculation yields
\begin{equation*}
    \begin{aligned}
       \left \|\widehat N^{(r)}_-(\zeta_r)\widehat V^{(3)}(\zeta_r)\widehat V_N^{(r)}(\zeta_r)^{-1}\widehat N^{(r)}_-(\zeta_r)^{-1}-I\right\|_{L^\infty\left((\zeta_r(\eta_l),0)\right)}=\mathcal{O}(t^{-1/3-\epsilon/2}).
    \end{aligned}
\end{equation*}

For $\zeta_r\in(0,\zeta_r(c_r))$, we have
\begin{equation*}    |D_{\textup{\uppercase\expandafter{\romannumeral1}},+}D_{\textup{\uppercase\expandafter{\romannumeral1}},-}^{-1}e^{-2itg_{\textup{\uppercase\expandafter{\romannumeral1}},+}}|\lesssim|k-c_l|^{1/2}e^{-t^{1/3}|k-c_l|^{1/2}},
\end{equation*}
and, by a calculation similar to that in \eqref{equ:asy for1}, it follows that
\begin{equation*}
    \begin{aligned}
       \left \|\widehat N^{(r)}_-(\zeta_r)\widehat V^{(3)}(\zeta_r)\widehat V_N^{(r)}(\zeta_r)^{-1}\widehat N^{(r)}_-(\zeta_r)^{-1}-I\right\|_{L^\infty\left((0,\zeta_r(c_r))\right)}=\mathcal{O}(t^{-1/3-\epsilon/2}).
    \end{aligned}
\end{equation*}
Therefore, it can be inferred from the small RH problem arguments that the solution $E^{(r)}(\zeta_r)$ exists uniquely and satisfies the estimate  \eqref{esti for E}.
\end{proof}

Next, we construct the local model $\widetilde M^{(r)}$ near $c_l$  with the help of RH problem $\widehat N^{(r)}$. Define
\begin{equation}\label{def:widetildeMr}
   \widetilde M^{(r)}(k) = \widetilde M^{(r)}(k;x,t):=P^{(r)}(k)\widehat N^{(r)}(\zeta_r(k)),\quad k\in D_{\varrho}(c_l),
\end{equation}
where
\begin{equation*}
   P^{(r)}(k)= P^{(r)}(k;x,t):=M^{(\infty)}(k)Q^{(r)}(\zeta_r(k))^{-1}\frac{1}{\sqrt{2}}(I-i\sigma_1)\zeta_r^{\frac{\sigma_3}{4}}(k).
\end{equation*}
Here, $M^{(\infty)}(k)$, $Q^{(r)}(\zeta_r(k))$ and $\zeta_r$ are defined in \eqref{equ:sol of Minfty}, \eqref{def:Qr} and \eqref{vira cha 1} respectively. Indeed, taking transformations \eqref{trans N to S} and \eqref{equ:hat S trans} into \eqref{def:widetildeMr} implies that
\begin{equation*}
    \widetilde M^{(r)}(k)=P^{(r)}(k)\begin{pmatrix}
		1 & 0\\
		i a(s) & 1
		\end{pmatrix}M^{P_{34}}(\zeta_r(k); 0, 0, s)e^{\hat\theta(\zeta_r(k))\sigma_3}Q^{(r)}(\zeta_r(k))G^{(r)}(\zeta_r(k))^{-1}.
\end{equation*}
We claim that   $P^{(r)}(k)$ is analytic in $D_{\varrho}(c_l)$. Indeed, for $k\in(c_l,c_l+\varrho)$, the jump of $\zeta_r^{1/4}$ implies that
\begin{equation*}
    \begin{aligned}
         P^{(r)}_+(k)&=M^{(\infty)}_+(k)Q^{(r)}(\zeta_r(k))_-^{-1}\frac{1}{\sqrt{2}}(I-i\sigma_1)\left(\zeta_r(k)\right)_-^{\frac{\sigma_3}{4}}\\
         &=M^{(\infty)}_-(k)\sigma_3\frac{1}{\sqrt{2}}(I-i\sigma_1)i^{-\sigma_3}\left(\zeta_r(k)\right)_+^{\frac{\sigma_3}{4}}\\
         &=M^{(\infty)}_-(k)Q^{(r)}(\zeta_r(k))_+^{-1}\frac{1}{\sqrt{2}}(I-i\sigma_1)\left(\zeta_r(k)\right)_+^{\frac{\sigma_3}{4}}\\
         &= P^{(r)}_-(k).
    \end{aligned}
\end{equation*}
For $k\in(c_l-\varrho,c_l)$, we have
\begin{equation*}
    \begin{aligned}
         P^{(r)}_+(k)&=M^{(\infty)}_+(k)Q^{(r)}(\zeta_r(k))_+^{-1}\frac{1}{\sqrt{2}}(I-i\sigma_1)\left(\zeta_r(k)\right)^{\frac{\sigma_3}{4}}\\
         &=M^{(\infty)}_-(k)\begin{pmatrix}
             0&1\\-1&0
         \end{pmatrix}\sigma_1\frac{1}{\sqrt{2}}(I-i\sigma_1)\left(\zeta_r(k)\right)^{\frac{\sigma_3}{4}}\\
         &=M^{(\infty)}_-(k)Q^{(r)}(\zeta_r(k))_-^{-1}\frac{1}{\sqrt{2}}(I-i\sigma_1)\left(\zeta_r(k)\right)^{\frac{\sigma_3}{4}}\\
         &= P^{(r)}_-(k).
    \end{aligned}
\end{equation*}
Moreover, as $k\to c_l$, $P^{(r)}(k)$ has the following asymptotics:
\begin{equation}\label{asy for Pr}
    P^{(r)}(k)=  P^{(r)}(c_l) +\mathcal{O}(|k-c_l|),
\end{equation}
where
\begin{equation}\label{prcl}
    P^{(r)}(c_l)=  \begin{pmatrix}
      \frac{1-i}{2}(2c_l)^{\frac{1}{4}}\left(\frac{3}{2}\right)^{\frac{1}{6}}t^{\frac{1}{6}}\left|g_{\textup{\uppercase\expandafter{\romannumeral1}}}^{(2)}(c_l)\right|^{\frac{1}{6}}&\frac{1-i}{2}(2c_l)^{-\frac{1}{4}}\left(\frac{3}{2}\right)^{-\frac{1}{6}}t^{-\frac{1}{6}}\left|g_{\textup{\uppercase\expandafter{\romannumeral1}}}^{(2)}(c_l)\right|^{-\frac{1}{6}}\\\frac{1+i}{2}(2c_l)^{\frac{1}{4}}\left(\frac{3}{2}\right)^{\frac{1}{6}}t^{\frac{1}{6}}\left|g_{\textup{\uppercase\expandafter{\romannumeral1}}}^{(2)}(c_l)\right|^{\frac{1}{6}}&\frac{-1-i}{2}(2c_l)^{-\frac{1}{4}}\left(\frac{3}{2}\right)^{-\frac{1}{6}}t^{-\frac{1}{6}}\left|g_{\textup{\uppercase\expandafter{\romannumeral1}}}^{(2)}(c_l)\right|^{-\frac{1}{6}}
    \end{pmatrix}.
\end{equation}

We can construct the local RH problem for $\widetilde M^{(l)}(k)$ near $-c_l$ by the symmetry $\widetilde M^{(l)}(k)=\sigma_1\widetilde M^{(r)}(-k)\sigma_1$, i.e.,
\begin{equation*}
     \widetilde M^{(l)}(k)=P^{(l)}(k)\sigma_1\begin{pmatrix}
		1 & 0\\
		i a(s) & 1
		\end{pmatrix}M^{P_{34}}(\zeta_l(k); 0, 0, s)e^{\hat\theta(\zeta_l(k))\sigma_3}\sigma_1Q^{(l)}(\zeta_l(k))G^{(l)}(\zeta_l(k))^{-1},
\end{equation*}
where
\begin{align*}
    &P^{(l)}(k)=\sigma_1P^{(r)}(-k)\sigma_1,\quad k \in D_{\varrho}(-c_l),\\
    &Q^{(l)}(k)=\sigma_1Q^{(r)}(-k)\sigma_1,\quad k \in D_{\varrho}(-c_r)\setminus\mathbb{R},\\
    &G^{(l)}(k)=\sigma_1G^{(r)}(-k)\sigma_1,\quad k\in\mathbb{C}.
\end{align*}
It can be deduced from the symmetry of $P^{(l)}(k)$ and $P^{(r)}(k)$ that the former one is also analytic in $D_{\varrho}(-c_l)$ with the following asymptotics as $k\to -c_l$:
\begin{equation}\label{asymptotic for P^l II}
     P^{(l)}(k)= \sigma_1P^{(r)}(c_l)\sigma_1+\mathcal{O}(|k+c_r|),\quad k\to -c_l,
\end{equation}
where $P^{(r)}(c_l)$ is  given in \eqref{prcl}.
Now we introduce the following lemma, which describes the relation between the local parametrices $\widetilde M^{(j)}$, $j\in\{r,l\}$ and the global parametrix $M^{(\infty)}$, and the approximation of the jump matrices.
\begin{lemma}\label{Lemma v3-vr-II}
    For $\xi\in\mathcal{T}_\textup{\uppercase\expandafter{\romannumeral1}}$ and $t>0$, the function $\widetilde M^{(j)}$ has the following properties for $j\in\{l,r\}$:
    \begin{itemize}
        \item [\rm (a)] As $t\to+\infty$,
        \begin{equation*}
            \|\widetilde M^{(j)}(k)M^{(\infty)}(k)^{-1}-I\|_{L^\infty(\partial D_\varrho(\pm c_l))}=\mathcal{O}(t^{1/3-\epsilon}).
        \end{equation*}
        Here, the notation ``$\pm$'' stands for ``$+$'' when $j=r$ and ``$-$'' when $j=l$.
 \item [\rm (b)] Across $\Gamma^{(j)}$, the jump matrix $\widetilde V^{(j)}$ of $\widetilde M^{(j)}(k)$ satisfies
 \begin{align*}
    & \|V^{(3)}-\widetilde V^{(j)}\|_{L^1(\Gamma^{(j)})}=\mathcal{O}(t^{-1}),\\
    & \|V^{(3)}-\widetilde V^{(j)}\|_{L^2(\Gamma^{(j)})}=\mathcal{O}(t^{-\frac{2}{3}}),\\
     & \|V^{(3)}-\widetilde V^{(j)}\|_{L^\infty(\Gamma^{(j)})}=\mathcal{O}(t^{-\frac{1}{3}}).
 \end{align*}
    \end{itemize}
\end{lemma}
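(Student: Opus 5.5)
We treat $j=r$ only; the case $j=l$ then follows from the symmetries $\widetilde M^{(l)}(k)=\sigma_1\widetilde M^{(r)}(-k)\sigma_1$ and $M^{(\infty)}(k)=\sigma_1M^{(\infty)}(-k)\sigma_1$ (the latter a property of $\Delta_l$). We also use that $\widetilde V^{(r)}$ coincides with the jump $\widehat V_N^{(r)}$ pulled back by $\zeta_r$.

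For part (a) we write, on $\partial D_\varrho(c_l)$,
\begin{equation*}
\widetilde M^{(r)}M^{(\infty)}(k)^{-1}=P^{(r)}(k)\widehat S^{(r)}(\zeta_r)G^{(r)}(\zeta_r)^{-1}M^{(\infty)}(k)^{-1}.
\end{equation*}
Away from the two lens rays the factor $G^{(r)}$ is the identity. Where it is not, it differs from $I$ by $e^{2\hat\theta}$, and on $\partial D_\varrho$ this is exponentially small because $|\zeta_r|\gtrsim t^{2/3}\varrho\to\infty$. Next we insert the expansion \eqref{asy S} of $\widehat S^{(r)}$ at large $\zeta_r$. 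The leading factor $\zeta_r^{-\sigma_3/4}\frac{1}{\sqrt2}\begin{pmatrix}1&i\\ i&1\end{pmatrix}Q^{(r)}$ is exactly cancelled by the factor $Q^{(r)-1}\frac{1}{\sqrt2}(I-i\sigma_1)\zeta_r^{\sigma_3/4}$ built into $P^{(r)}$, since $\frac{1}{2}(I-i\sigma_1)(I+i\sigma_1)=I$. What remains is
\begin{equation*}
\widetilde M^{(r)}M^{(\infty)-1}-I=M^{(\infty)}Q^{(r)-1}\tfrac{1}{\sqrt2}(I-i\sigma_1)\,\zeta_r^{\sigma_3/4}\,\mathcal{O}(\zeta_r^{-1})\,\zeta_r^{-\sigma_3/4}\,(\cdots)M^{(\infty)-1}.
\end{equation*}
Conjugation by $\zeta_r^{\sigma_3/4}$ can enlarge the off-diagonal entries by at most $|\zeta_r|^{1/2}$, so this error is $\mathcal{O}(|\zeta_r|^{-1/2})$. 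On $\partial D_\varrho$ we have $|\zeta_r|\asymp t^{2/3}\varrho$ with $\varrho\sim t^{-2/3+\epsilon}$, hence $|\zeta_r|\sim t^{\epsilon}$. The quantities $M^{(\infty)}$ and $M^{(\infty)-1}$ are $\mathcal{O}(\varrho^{-1/4})$ there, which contributes a further factor $\varrho^{-1/2}\sim t^{1/3-\epsilon/2}$. Combining these gives $t^{-\epsilon/2}\cdot t^{1/3-\epsilon/2}=t^{1/3-\epsilon}$.

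The main obstacle is the bookkeeping of powers in part (a). The $\mathcal{O}(\zeta_r^{-1})$ term of $M^{P_{34}}$ must be used in its precise matrix structure, taken from Appendix \ref{p34}, to see which entries are amplified by $\zeta_r^{\pm1/2}$. We also need $M^{(\infty)}$ near $c_l$, namely $\Delta_l=\mathcal{O}(|k-c_l|^{-1/4})$. If the naive count exceeds the claimed rate, we would exploit that $P^{(r)}$ is analytic and bounded by \eqref{asy for Pr}, so that only the product $P^{(r)}\zeta_r^{-\sigma_3/4}$ appears and the singular factors cancel.

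For part (b), $V^{(3)}-\widetilde V^{(r)}$ is supported on $\Gamma^{(r)}$, and in the $\zeta_r$ variable it equals $\widehat V^{(3)}-\widehat V_N^{(r)}$. The estimates carried out in the proof of Proposition \ref{pro:estimate E}, before conjugation by $\widehat N^{(r)}_-$, already bound this difference piece by piece. On the lens rays we apply \eqref{equ:asy for1} and \eqref{equ:asy for2}. On $(\zeta_r(\eta_l),0)$ we use $|D_{\textup{I}}^{-2}r-1|\lesssim|k-c_l|^{1/2}$ together with the fact that this interval has length $\mathcal{O}(t^{-2/3})$ in $k$. On $(0,\zeta_r(c_r))$ we use the bound $|k-c_l|^{1/2}e^{-t^{1/3}|k-c_l|^{1/2}}$. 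On $(-\varrho,\zeta_r(\eta_l))$ we use item (d) of Proposition \ref{analytic extension of r RI}. Integrating each bound in $u=|k-c_l|$ then gives the $L^1$, $L^2$ and $L^\infty$ rates $t^{-1}$, $t^{-2/3}$ and $t^{-1/3}$.
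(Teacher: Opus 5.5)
Your part (a) follows the paper's argument: you insert the large-$\zeta_r$ expansion of $\widehat S^{(r)}$, cancel its leading factor against the one built into $P^{(r)}$, and count powers. Your count $\varrho^{-1/2}|\zeta_r|^{-1/2}\asymp t^{-1/3}\varrho^{-1}=t^{1/3-\epsilon}$ is equivalent to the paper's use of the $t^{\pm1/6}$ columns of $P^{(r)}(c_l)$, which amplify $(M_1^{P_{34}})_{12}/\zeta_r$ by $t^{1/3}$. Your observation that $G^{(r)}$ is exponentially close to $I$ on $\partial D_\varrho(c_l)$ supplies a step the paper omits. That part is fine.

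Part (b) also follows the paper, and it fails at one piece you take over from the proof of Proposition \ref{pro:estimate E}: the interval $(0,\zeta_r(c_r))$, i.e.\ $k\in(c_l-\varrho,c_l)$.

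\begin{itemize}
\item The model is built from $M^{P_{34}}(\zeta;0,0,s)$, whose jump on $\Sigma_1$ is $I$. So on this interval $\widetilde V^{(r)}=\begin{pmatrix}0&-1\\1&0\end{pmatrix}$, and $V^{(3)}-\widetilde V^{(r)}$ has the single nonzero entry $f(k)=D_{\mathrm{I},+}D_{\mathrm{I},-}^{-1}e^{-2itg_{\mathrm{I},+}(k)}$.
\item Since $g_{\mathrm{I}}(c_l)=0$, we have $e^{-2itg_{\mathrm{I},+}(k)}\to1$ as $k\to c_l^-$. In the local variable this factor equals $e^{-2\hat\theta(\zeta_r)}e^{-2(S(k)-s)\zeta_r^{1/2}}$ with $\zeta_r>0$.
\item By item (e) of Proposition \ref{prop: D function RI}, both boundary values $D_{\mathrm{I},\pm}$ tend to $e^{\frac i2\arg b_{l,0}}$. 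Hence $f(k)\to1$.
\end{itemize}

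The bound $|k-c_l|^{1/2}e^{-t^{1/3}|k-c_l|^{1/2}}$ you invoke vanishes at $c_l$, so it cannot hold. In fact $|f|\ge\tfrac12$ on an interval of length $\asymp t^{-2/3}$ ending at $c_l$. This gives $\|V^{(3)}-\widetilde V^{(r)}\|_{L^\infty}\gtrsim1$, an $L^2$ norm $\asymp t^{-1/3}$, and an $L^1$ norm $\asymp t^{-2/3}$. So (b) is not true for $\widetilde M^{(r)}$ as constructed, and Proposition \ref{pro:est VE-I} and the small-norm step inherit the problem.

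The missing ingredient is a model that reproduces this entry. Take $\omega=1$ in RH problem \ref{RHPp34}. Then the jump of $\widehat S^{(r)}$ on the positive $\zeta_r$-axis becomes $\sigma_3\begin{pmatrix}1&e^{-2\hat\theta}\\0&1\end{pmatrix}\sigma_1=\begin{pmatrix}e^{-2\hat\theta}&1\\-1&0\end{pmatrix}$. The difference becomes
\[
\bigl(D_{\mathrm{I},+}D_{\mathrm{I},-}^{-1}e^{-2(S(k)-s)\zeta_r^{1/2}}-1\bigr)e^{-2\hat\theta}=\mathcal O\bigl((|k-c_l|^{1/2}+|k-c_l|\,|\zeta_r|^{1/2})e^{-c\zeta_r^{3/2}}\bigr).
\]
The substitution $u=c_l-k$, $\zeta_r\asymp t^{2/3}u$ then gives exactly $t^{-1}$, $t^{-2/3}$, $t^{-1/3}$.

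Your other pieces (the lens rays, $(c_l,\eta_l)$, and $(\eta_l,c_l+\varrho)$ via item (d)) are fine as you describe them.
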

\begin{proof}
\hfill
\begin{itemize}
    \item [(a)] We take $j=r$ for example. It follows directly from the asymptotic behavior of $P^{(r)}(k)$ in \eqref{asy for Pr} and the large-$\zeta_r$ expansion of $\widehat S^{(r)}(\zeta_r)$ in \eqref{asy S} that
    \begin{align*}
        \widetilde M^{(r)}(k)M^{(\infty)}(k)^{-1}-I&=P^{(r)}(k)\widehat N^{(r)}(\zeta_r(k))M^{(\infty)}(k)^{-1}-I\\
        &=P^{(r)}(k)\left(I+\frac{M_1^{P_{34}}(s)}{\zeta_r}+\mathcal{O}(\zeta_r^{-2}) \right) P^{(r)}(k)^{-1}-I\\
        &=P^{(r)}(k)\left(\frac{M_1^{P_{34}}(s)}{\zeta_r}+\mathcal{O}(\zeta_r^{-2}) \right) P^{(r)}(k)^{-1}=\mathcal{O}(t^{1/3-\epsilon}).
    \end{align*}
     \item [(b)] Using the definitions of $\widetilde M^{(r)}(k)$ in \eqref{def:widetildeMr} and the transformation $\widehat M^{(r)}(\zeta_r):=M^{(r)}(k(\zeta_r))$, we find that $V^{(3)}(k)-\widetilde V^{(r)}(k)$ on $\Gamma^{(r)}$ coincides with $\widehat V^{(3)}(\zeta_r)-\widehat V_N^{(r)}(\zeta_r)$ on $\widehat\Gamma^{(r)}$. Consequently, item (b) follows from the analysis in Proposition \ref{pro:estimate E}.

\end{itemize}
\end{proof}

\subsection{Small norm RH problem for $M^{(err)}$}\label{subsec:small norm RH RII}
Define
\begin{align}\label{def:Merr TI}
    M^{(err)}(k)=M^{(err)}(k;x,t):=
            \begin{cases}
             M^{(3)}(k)\left(M^{(\infty)}\left(k\right)\right)^{-1}, & k\in \mathbb{C}\setminus\left(D_{\varrho}\left(c_l\right)\cup D_{\varrho}\left(-c_l\right)\right),\\
             M^{(3)}(k)\left(\widetilde M^{(r)}\left(k\right)\right)^{-1}, & k\in D_{\varrho}\left(c_l\right),\\
             M^{(3)}(k)\left(\widetilde M^{(l)}\left(k\right)\right)^{-1}, & k\in D_{\varrho}\left(-c_l\right).
            \end{cases}
    \end{align}
 It is readily seen that $M^{(err)}(k)$ satisfies the following RH problem.
    \begin{RHP}\label{RHP: Merr TI}
    \hfill
    \begin{itemize}
        \item $M^{(err)}(k)$ is holomorphic for $k\in\mathbb{C}\setminus\Gamma^{(err)}$, where
        \begin{equation*}
        \Gamma^{(err)}:=\partial D_{\varrho}\left(\eta\right)\cup\partial D_{\varrho}\left(-\eta\right)\cup \Gamma^{(3)};
        \end{equation*}
        see Figure \textup{\ref{fig:jump contour Merr RII}} for an illustration.
        \item $M^{(err)}(k)$ has continuous boundary values $M^{(err)}_\pm(k)$ on $k\in \Gamma^{(err)}$ with the jump condition
        \begin{equation*}
            M^{(err)}_{+}(k)=M^{(err)}_{-}(k)V^{(err)}(k),
        \end{equation*}
        where
         \begin{align}\label{equ:jump Verr RII}
           V^{(err)}(k)=
                \begin{cases}
                 M^{(\infty)}_-(k)V^{(3)}(k){M^{(\infty)}_+(k)}^{-1}, & k\in\Gamma^{(3)}\backslash \left(\overline{D_\varrho(c_l)}\cup \overline{D_\varrho(-c_l)}\right),\\
                 \widetilde M^{(r)}(k){M^{(\infty)}(k)}^{-1}, & k\in\partial D_\varrho(c_l),\\
               \widetilde  M^{(l)}(k){M^{(\infty)}(k)}^{-1}, & k\in\partial D_\varrho(-c_l),\\
                 \widetilde M^{(r)}_-(k)V^{(3)}(k){\widetilde M^{(r)}_+(k)}^{-1}, & k\in\Gamma^{(3)}\cap D_\varrho(c_l),\\
                \widetilde M^{(l)}_-(k)V^{(3)}(k){\widetilde M^{(l)}_+(k)}^{-1}, & k\in\Gamma^{(3)}\cap D_\varrho(-c_l).
                \end{cases}
        \end{align}
        \item As $k\rightarrow\infty$ in $k\in\mathbb{C}\setminus\Gamma^{(err)}$, we have $M^{(err)}(k)=I+\mathcal{O}(k^{-1})$.
        \item As $k\rightarrow\pm c_l$, we have $M^{(err)}(k)=\mathcal{O}(1)$.
    \end{itemize}
    \end{RHP}

\begin{figure}[H]
\begin{center}
    \tikzset{every picture/.style={line width=0.75pt}}
    \begin{tikzpicture}[x=0.75pt,y=0.75pt,yscale=-1.15,xscale=1.15]
%Shape: Circle (left) -- 放大后半径35
    \draw[color={rgb, 255:red, 0; green, 0; blue, 0}, draw opacity=1,
          decoration={markings, mark=at position 0.25 with {\arrow{latex}}},
          postaction={decorate}]
        (219,146) .. controls (219,126.68) and (234.68,111) .. (254,111)
                  .. controls (273.32,111) and (289,126.68) .. (289,146)
                  .. controls (289,165.32) and (273.32,181) .. (254,181)
                  .. controls (234.68,181) and (219,165.32) .. (219,146) -- cycle;

    %Shape: Circle (right) -- 放大后半径35
    \draw[color={rgb, 255:red, 0; green, 0; blue, 0}, draw opacity=1,
          decoration={markings, mark=at position 0.25 with {\arrow{latex}}},
          postaction={decorate}]
        (381,146) .. controls (381,126.68) and (396.68,111) .. (416,111)
                  .. controls (435.32,111) and (451,126.68) .. (451,146)
                  .. controls (451,165.32) and (435.32,181) .. (416,181)
                  .. controls (396.68,181) and (381,165.32) .. (381,146) -- cycle;
    %Straight Lines
    \draw[decoration={markings, mark=at position 0.5 with {\arrow{latex}}},
          postaction={decorate}]
        (175,92) -- (225.1,126.2);

    \draw[decoration={markings, mark=at position 0.5 with {\arrow{latex}}},
          postaction={decorate}]
        (175,197) -- (224.6,165.0);

    %Straight Lines
    \draw[decoration={markings, mark=at position 0.5 with {\arrow{latex}}},
          postaction={decorate}]
        (444.1,125.3) -- (492,90);

    \draw[decoration={markings, mark=at position 0.5 with {\arrow{latex}}},
          postaction={decorate}]
        (445.4,164.9) -- (492,195);

    %Straight Lines -- 末端箭头
    \draw[-latex] (451,146) -- (573,146);
\draw (381,146)--(451,146) ;
    %Straight Lines -- 中点箭头（两圆之间）
    \draw[decoration={markings, mark=at position 0.55 with {\arrow{latex}}},
          postaction={decorate}]
        (289,146) -- (381,146);

    %Straight Lines -- 无箭头
    \draw (93,146) -- (219,146);
  \draw  (219,146)-- (289,146);
    % 短线段（圆边界到圆心）
    \draw (225.1,126.2) -- (254,146);
    \draw (224.6,165.0) -- (254,146);
    \draw (444.1,125.3) -- (416,146);
    \draw (445.4,164.9) -- (416,146);

    % Text Node
    \draw (408,148.4) node [anchor=north west][inner sep=0.75pt][font=\tiny] {\footnotesize$c_l$};
    \draw (429,148.4) node [anchor=north west][inner sep=0.75pt][font=\tiny] {\footnotesize$\eta_l$};
    \draw (245,147.4) node [anchor=north west][inner sep=0.75pt][font=\tiny] {\footnotesize$-c_l$};
    \draw (220,147.4) node [anchor=north west][inner sep=0.75pt][font=\tiny] {\footnotesize$-\eta_l$};
    \draw (579,137.4) node [anchor=north west][inner sep=0.75pt] {$\re k$};
    \draw (300,147.4) node [anchor=north west][inner sep=0.75pt][font=\tiny] {\footnotesize$-c_{r}$};
    \draw (349,148.4) node [anchor=north west][inner sep=0.75pt][font=\tiny] {\footnotesize$c_{r}$};

    % 圆心标记
    \fill (254,146)  circle (1.2pt);
    \fill (416,146)  circle (1.2pt);
    \fill (354,146.4)  circle (1.2pt);
    \fill (308,146.4)  circle (1.2pt);
    \fill (433,146.4)  circle (1.2pt);
    \fill (236,146.4)  circle (1.2pt);

    \end{tikzpicture}
\caption{The jump contours $\Gamma^{(err)}$ of RH problem for $M^{(err)}$ for $\xi\in\mathcal{T}_\textup{\uppercase\expandafter{\romannumeral1}}$.}
\label{fig:jump contour Merr RII}
\end{center}
\end{figure}

Using items (c)-(d) in Proposition \ref{analytic extension of r RI}, together with Proposition \ref{pro:estimate E} and Lemma \ref{Lemma v3-vr-II}, a straightforward calculation yields the following proposition.
\begin{proposition}\label{pro:est VE-I}
    For $\xi\in\mathcal{T}_\textup{\uppercase\expandafter{\romannumeral1}}$, the function $V^{(err)}(\cdot)-I:\Gamma^{(err)}\to\mathbb{C}^{2\times2}$ lies in $L^p\left(\Gamma^{(err)}\right)$ for $p=1,2,\infty$, and uniformly for $\xi\in\mathcal{T}_\textup{\uppercase\expandafter{\romannumeral1}}$, it holds that
    \begin{align}
    &\Vert V^{(err)}(k)-I \Vert_{L^p\left(\Gamma^{(err)}\setminus\left(\overline{D_\varrho(c_l)}\cup \overline{D_\varrho(-c_l)}\cup\mathbb{R}\right)\right)}=
    \mathcal{O}(e^{-ct}), \label{Ve-I-1}\\
    &\Vert V^{(err)}(k)-I \Vert_{L^p\left(\partial D_\varrho(c_l)\cup \partial D_\varrho(-c_l)\right)}=\mathcal{O}(t^{\gamma_p}),\label{Ve-I-2}\\
    &\Vert V^{(err)}(k)-I \Vert_{L^p\left(\mathbb{R}\setminus\left(\overline{D_\varrho(c_l)}\cup \overline{D_\varrho(-c_l)}\right)\right)}=\mathcal{O}(t^{-1}),\label{Ve-I-3}\\
    &\Vert V^{(err)}(k)-I \Vert_{L^p\left(\Gamma^{(err)}\cap \left(D_\varrho(c_l)\cup D_\varrho(-c_l)\right)\right)}=\mathcal{O}(t^{\omega_p}).\label{Ve-I-4}
   % \begin{cases}
     %    \mathcal{O}(e^{-ct}), & k\in\Gamma^{(err)}\setminus\left(\overline{D_\varrho(c_l)}\cup \overline{D_\varrho(-c_l)}\cup\mathbb{R}\right),\\
    %     \mathcal{O}(t^{\gamma_p}), &k\in\partial D_\varrho(c_l)\cup \partial D_\varrho(-c_l),\\
    %     \mathcal{O}(t^{-1}), &k\in\mathbb{R}\setminus\left(\overline{D_\varrho(c_l)}\cup \overline{D_\varrho(-c_l)}\right),\\
   %      \mathcal{O}(t^{\omega_p}), &k\in \Gamma^{(err)}\cap \left(D_\varrho(c_l)\cup D_\varrho(-c_l)\right),
   %     \end{cases}
\end{align}
where $c>0$,
\begin{align*}
    \gamma_1=-\frac{1}{3},\;\gamma_2=-\frac{\epsilon}{2},\gamma_\infty=\frac{1}{3}-\epsilon,\quad
\omega_1=-1,\;\omega_2=-\frac{2}{3},\omega_\infty=-\frac{1}{3}.
\end{align*}
In particular, uniformly for $\xi\in\mathcal{T}_\textup{\uppercase\expandafter{\romannumeral1}}$, it holds that
\begin{equation}\label{equ:unifrom V-I}
    \Vert V^{(err)}(k)-I \Vert_{L^p\left(\Gamma^{(err)}\right)}=\mathcal{O}(t^{\gamma_p}), \quad p=1,2,\infty.
\end{equation}
\end{proposition}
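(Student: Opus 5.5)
The plan is to treat the four parts of $\Gamma^{(err)}$ in the order \eqref{Ve-I-1}--\eqref{Ve-I-4}, using the form of $V^{(err)}$ in \eqref{equ:jump Verr RII} on each part. Throughout, $M^{(\infty)}=\Delta_l$ and its inverse are uniformly bounded on $\Gamma^{(err)}$ away from $D_\varrho(\pm c_l)$, since $\Delta_l$ is singular only at $\pm c_l$.

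\textbf{The lens contours $\Gamma^{(3)}_j\cup\Gamma^{(3)*}_j$ outside the disks.} On these contours
\[
V^{(err)}-I=M^{(\infty)}\bigl(V^{(3)}-I\bigr)(M^{(\infty)})^{-1},
\]
and the only nonzero entry of $V^{(3)}-I$ is $D_{\textup{I}}^{\mp2}r_a e^{\pm2itg_{\textup{I}}}$ (or its conjugate). Item (c) of Proposition \ref{analytic extension of r RI} gives $|r_a|\lesssim e^{\frac t4|\im g_{\textup{I}}|}/(1+|k|^2)$, and Proposition \ref{prop: D function RI} shows $D_{\textup{I}}^{\pm1}$ is bounded there. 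The entry is therefore bounded by $e^{-\frac32 t|\im g_{\textup{I}}|}/(1+|k|^2)$. Outside $D_\varrho(\pm c_l)$ the rays leave $\pm\eta_l$ at a fixed angle, and the signature table (Figure \ref{fig:signs img RI}) gives $|\im g_{\textup{I}}(k)|\gtrsim 1$ for $|k\mp\eta_l|\gtrsim\varrho$. This yields \eqref{Ve-I-1} in every $L^p$. This is the step I expect to be the main obstacle. The disk radius $\varrho$ is comparable to $|\eta_l-c_l|t^{\epsilon}\sim t^{\epsilon-2/3}$, which shrinks, so near the disk boundary $\im g_{\textup{I}}$ is only of size $\varrho^{3/2}$. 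The honest bound there is $e^{-ct\varrho^{3/2}}=e^{-ct^{3\epsilon/2}}$, and since $\epsilon>1/2$ this is still $\mathcal{O}(e^{-ct^{3/4}})$. I would either reinterpret $e^{-ct}$ in this weakened sense or shrink the portion of the lenses where $|k\mp c_l|$ is small; either suffices for the final error.

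\textbf{The real line outside the disks.} On $\mathbb{R}\setminus(\overline{D_\varrho(c_l)}\cup\overline{D_\varrho(-c_l)})$ there are three pieces:
\begin{itemize}
\item On $(-\infty,-\eta_l)\cup(\eta_l,\infty)$, $V^{(3)}-I$ is built from $r_r$, so item (d) of Proposition \ref{analytic extension of r RI} gives $\mathcal{O}(t^{-1})$ in $L^p$.
\item On $(-c_r,c_r)$, $V^{(3)}=V^{(\infty)}$, so $V^{(err)}=I$ because $M^{(\infty)}$ has exactly this jump.
\item On $(c_r,c_l-\varrho)$ (and its mirror), $V^{(3)}(V^{(\infty)})^{-1}-I$ has the single entry $D_+D_-^{-1}e^{-2itg_{\textup{I},+}}$. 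On this interval $ig_{\textup{I},+}$ has a positive real part bounded below by $\gtrsim\varrho^{1/2}\cdot t^{-2/3}$ plus a cubic term, so it decays like $e^{-ct\varrho^{3/2}}$, which is again superpolynomially small.
\end{itemize}
The interval $(c_l,\eta_l)$ lies inside the disk, because $\varrho>|\eta_l-c_l|$ by construction. Together these give \eqref{Ve-I-3}.

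\textbf{The disk boundaries.} On $\partial D_\varrho(\pm c_l)$, \eqref{Ve-I-2} with $p=\infty$ is Lemma \ref{Lemma v3-vr-II}(a), that is, $\mathcal{O}(t^{1/3-\epsilon})$. The $L^1$ and $L^2$ bounds follow by multiplying by the arc length $|\partial D_\varrho|\sim\varrho$ or $\varrho^{1/2}$, which gives exponents no worse than the stated $\gamma_1,\gamma_2$. If needed, one refines using $\zeta_r\sim t^{2/3}\varrho$ on the boundary, so that $P^{(r)}M_1^{P_{34}}\zeta_r^{-1}(P^{(r)})^{-1}=\mathcal{O}(t^{1/3}/(t^{2/3}\varrho))$.

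\textbf{The contours inside the disks.} On $\Gamma^{(3)}\cap D_\varrho(\pm c_l)$,
\[
V^{(err)}-I=\widetilde M^{(j)}_-\bigl(V^{(3)}-\widetilde V^{(j)}\bigr)(\widetilde M^{(j)}_+)^{-1}.
\]
I would use Lemma \ref{Lemma v3-vr-II}(b) together with the control of $\widehat N^{(r)}$, $P^{(r)}$ and $E^{(r)}$ from Proposition \ref{pro:estimate E}, absorbing the $|\zeta_r|^{-1/4}$ growth of $\widehat N^{(r)}$ exactly as in that proof. This gives \eqref{Ve-I-4} with the exponents $\omega_p$.

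\textbf{Conclusion.} Taking the worst exponent over the four pieces for each $p$ gives \eqref{equ:unifrom V-I}, since $\gamma_p\ge\omega_p$ and $\gamma_p\ge-1$. All constants depend only on the bounds in Propositions \ref{prop:a,b,r}, \ref{prop: D function RI} and \ref{analytic extension of r RI}, which are uniform for $\xi\in\mathcal{T}_{\textup{I}}$, so the estimates are uniform in $\xi$.
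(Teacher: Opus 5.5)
\textbf{Route.} Your decomposition of $\Gamma^{(err)}$ and the input you use on each piece are the same as in the paper's proof:
\begin{itemize}
\item Proposition~\ref{analytic extension of r RI}(c) with the signature of $g_{\mathrm I}$ on the lenses;
\item Proposition~\ref{analytic extension of r RI}(d) on the real line;
\item Lemma~\ref{Lemma v3-vr-II}(a) on $\partial D_\varrho(\pm c_l)$ and Lemma~\ref{Lemma v3-vr-II}(b) inside the disks.
\end{itemize}
Where you go beyond the paper, you are right. Since $\varrho\asymp t^{\epsilon-2/3}$ and $|g_{\mathrm I}|\lesssim\varrho^{3/2}$ on $\partial D_\varrho(\pm c_l)$, the lens pieces are only $\mathcal{O}(e^{-ct^{3\epsilon/2}})$. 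So \eqref{Ve-I-1} has to be read in that weakened sense. Your other option, redrawing the lenses, cannot help, because every lens must cross $\partial D_\varrho(\pm c_l)$. Your treatment of $(c_r,c_l-\varrho)$ fills in a piece the paper passes over. Your arc-length count on $\partial D_\varrho$ reproduces $\gamma_1,\gamma_2$ exactly.

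\textbf{Main gap: the step inside the disks.} The paper's proof makes the same leap here.
\begin{itemize}
\item Inside $D_\varrho(c_l)$ one has $V^{(err)}-I=P^{(r)}\,Y\,(P^{(r)})^{-1}$ with $Y=\widehat N^{(r)}_-\bigl(\widehat V^{(3)}(\widehat V^{(r)}_N)^{-1}-I\bigr)(\widehat N^{(r)}_-)^{-1}$.
\item By \eqref{prcl}, $P^{(r)}\approx A\,t^{\sigma_3/6}$ with $A$ and $A^{-1}$ bounded, so $Y_{12}$ is multiplied by $t^{1/3}$.
\item Lemma~\ref{Lemma v3-vr-II}(b) bounds $V^{(3)}-\widetilde V^{(r)}$ before any conjugation. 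The proof of Proposition~\ref{pro:estimate E} bounds only $Y$. Neither accounts for $P^{(r)}$, so absorbing the growth ``exactly as in that proof'' does not give $\omega_p$.
\end{itemize}
In fact it cannot give $\omega_p$. On $\widehat\Gamma^{(r)}_1$ one has $Y_{12}=\bigl((\widehat N^{(r)}_-)_{11}\bigr)^2\delta$. Where $|\zeta_r|\asymp 1$, $\delta=(D_{\mathrm I}^2r_a^*-1)e^{2\hat\theta}+\mathcal{O}(t^{-2/3})$, and $D_{\mathrm I}^2r_a^*-1$ is generically of size $|k-c_l|^{1/2}\asymp t^{-1/3}$. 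Hence $t^{1/3}Y_{12}$ is of order one on a $k$-set of length $\asymp t^{-2/3}$. With $\widetilde M^{(r)}=P^{(r)}\widehat N^{(r)}$, the bounds \eqref{Ve-I-4} therefore fail for every $p$. The bound \eqref{equ:unifrom V-I} fails for $p=\infty$, which is exactly the smallness the small-norm argument needs.

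A way out is to use the exact local solution $P^{(r)}E^{(r)}\widehat N^{(r)}$, and its mirror at $-c_l$, as the parametrix. Then $V^{(err)}=I$ inside the disks, and the whole mismatch sits on $\partial D_\varrho(\pm c_l)$. There $|\zeta_r|\asymp t^{\epsilon}$, and the extra factor $t^{1/3}$ is affordable.

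\textbf{Smaller slip of the same kind.} $M^{(\infty)}=\Delta_l$ is not uniformly bounded off the disks, because the disks shrink and $|\Delta_l(k)|\asymp|k\mp c_l|^{-1/4}$. Conjugation can therefore cost a factor $\varrho^{-1/2}=t^{1/3-\epsilon/2}$ near $\partial D_\varrho(\pm c_l)$.
\begin{itemize}
\item This is harmless wherever you have $e^{-ct^{3\epsilon/2}}$ decay.
\item On $(c_l+\varrho,\infty)$ it is also harmless for $p=1,2$: use H\"older with $\|r_r\|_{L^{p'}}=\mathcal{O}(t^{-1})$ for large $p'$.
\item For $p=\infty$ it only gives $\mathcal{O}(t^{-1}\varrho^{-1/2})=\mathcal{O}(t^{-2/3-\epsilon/2})$ in \eqref{Ve-I-3}. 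To recover $\mathcal{O}(t^{-1})$ you need a pointwise bound on $r_r$ near $c_l$, which Proposition~\ref{analytic extension of r RI}(d) does not provide.
\end{itemize}
This slip does not affect \eqref{equ:unifrom V-I}.
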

\begin{proof}
    We have
    \begin{equation*}
          V^{(err)}-I =\begin{cases}
                 M^{(\infty)}\left(V^{(3)}-I\right){M^{(\infty)}}^{-1}, & k\in\Gamma^{(3)}\backslash \left(\overline{D_\varrho(c_l)}\cup \overline{D_\varrho(-c_l)}\cup[-c_l,c_l]\right),\\
                 M^{(\infty)}_-\left(V^{(3)}-V^{(\infty)}\right){M^{(\infty)}_+}^{-1}, & k\in[-c_l,c_l]\backslash \left(\overline{D_\varrho(c_l)}\cup \overline{D_\varrho(-c_l)}\right),\\
                 \widetilde M^{(r)}{M^{(\infty)}}^{-1}-I, & k\in\partial D_\varrho(c_l),\\
               \widetilde  M^{(l)}{M^{(\infty)}}^{-1}-I, & k\in\partial D_\varrho(-c_l),\\
                 \widetilde M^{(r)}_-\left(V^{(3)}-\widetilde V^{(r)}\right){\left(\widetilde M^{(r)}_{+}\right)}^{-1}, & k\in\Gamma^{(3)}\cap D_\varrho(c_l),\\
                \widetilde M^{(l)}_-\left(V^{(3)}-\widetilde V^{(l)}\right)\left({\widetilde M^{(l)}_{+}}\right)^{-1}, & k\in\Gamma^{(3)}\cap D_\varrho(-c_l).
                \end{cases}
    \end{equation*}
    Due to the signature of $g_\textup{\uppercase\expandafter{\romannumeral1}}$ outside $\overline{D_\varrho(c_l)}\cup \overline{D_\varrho(-c_l)}\cup\mathbb{R}$, we infer from item (c) of Proposition \ref{analytic extension of r RI} that $\Vert V^{(err)}(k)-I \Vert_{L^p\left(\Gamma^{(err)}\setminus\left(\overline{D_\varrho(c_l)}\cup \overline{D_\varrho(-c_l)}\cup\mathbb{R}\right)\right)}$ has exponential decay as $t\to+\infty$. Moreover, item (d) of Proposition \ref{analytic extension of r RI} implies that $\Vert V^{(err)}(k)-I \Vert_{L^p\left(\mathbb{R}\setminus\left(\overline{D_\varrho(c_l)}\cup \overline{D_\varrho(-c_l)}\right)\right)}=\mathcal{O}(t^{-1})$ uniformly for $\xi\in\mathcal{T}_\textup{\uppercase\expandafter{\romannumeral1}},\;p=1,2,\infty.$ It can be inferred from item (a) and item (b) of Lemma \ref{Lemma v3-vr-II} that \eqref{Ve-I-2} and \eqref{Ve-I-4} hold respectively.
\end{proof}
It then follows from the small norm RH problem theory \cite{DZ02} that there exists
a unique solution to RH problem \ref{RHP: Merr TI} for large positive $t$. Furthermore,
according to Beals-Coifman theory \cite{BCdecom}, the solution to $M^{(err)}$ can be given by
\begin{equation}\label{equ:BCsolforError}
M^{(err)}(k) = I + \frac{1}{2\pi i} \int_{\Gamma^{(err)}} \frac{(I + \varpi(z))(V^{(err)}(z) - I)}{z - k} \dif z,
\end{equation}
where $\varpi \in L^2(\Gamma^{(err)})$ is the unique solution of $(1 - C_{V^{(err)}})\varpi = C_{V^{(err)}} I$. And $C_{V^{(err)}}: L^2(\Gamma^{(err)}) \to L^2(\Gamma^{(err)})$ is the Cauchy operator on $\Gamma^{(err)}$, which is defined as
$C_{V^{(err)}}(f)(k) = C_{-}f(V^{(err)} - I)$ with $C_{-}$ being the Cauchy projection
operator on $\Gamma^{(err)}$.
%= \lim_{z \to k^-, k \in \Sigma^{(e)}} \int_{\Sigma^{(e)}} \frac{f(z)(V^{(e)}(z) - I)}{z - k} \,\mathrm{d}z.
The existence and uniqueness of $\varpi$ come from the boundedness of the Cauchy operator $C_{-}$, which admits
\begin{align}\label{CverrL2}
\|C_{V^{(err)}}\|_{L^2(\Gamma^{(err)})} \leqslant \|C_{-}\|_{L^2(\Gamma^{(err)}) \to L^2(\Gamma^{(err)})} \|V^{(err)} - I\|_{L^\infty(\Gamma^{(err)})} = \mathcal{O}(t^{1/3-\epsilon}).
\end{align}
In addition,
\begin{equation*}
    \|\varpi\|_{L^2(\Gamma^{(err)})} \leqslant \frac{\|C_{V^{(err)}}\|_{L^2(\Gamma^{(err)})}}{1 - \|C_{V^{(err)}}\|_{L^2(\Gamma^{(err)})}} \lesssim t^{1/3-\epsilon}.
\end{equation*}
On the other hand, $\varpi$ can be written as
$$
 \varpi= C_{V^{(err)}} I+\left(1-C_{V^{(err)}}\right)^{-1}\left(C_{V^{(err)}}^2 I\right),
$$
which implies that
\begin{equation}
\|C_{V^{(err)}}I\|_{L^2(\Gamma^{(err)})}\lesssim t^{1/3-3\epsilon/2},\quad  \|\varpi-C_{V^{(err)}}I\|_{L^2(\Gamma^{(err)})}\lesssim t^{2/3-5\epsilon/2}.\label{estiCVE}
\end{equation}

Moreover, by \eqref{equ:BCsolforError}, it follows that
%as $k\to\infty$,
%For later use, we conclude this section with behavior of $M^{(err)}(k)$ at $k=\infty$.
%By \eqref{equ:BCsolforError}, it follows that
\begin{equation*}
    M^{(err)}(k)=I+\frac{M^{(err)}_1}{k}+\mathcal{O}(k^{-2}), \quad k\rightarrow\infty,
\end{equation*}
where
\begin{equation}\label{equ:Merr_1 TI}
    M^{(err)}_1=-\frac{1}{2\pi i}\int_{\Gamma^{(err)}}\left(I + \varpi(z)\right)(V^{(err)}(z)-I)\dif z.
\end{equation}
Then we obtain the following proposition.
\begin{proposition}\label{prop: result of Merr_1 TI}
     For $\xi\in\mathcal{T}_\textup{\uppercase\expandafter{\romannumeral1}}$, as $t\rightarrow+\infty$, we have
    \begin{equation*}\label{equ:Merr_1 RII}
    \begin{aligned}
         M^{(err)}_1=&\frac{1}{4}\left(\frac{3}{2}\right)^{-\frac{2}{3}}\left|g_{\textup{\uppercase\expandafter{\romannumeral1}}}^{(2)}(c_l)\right|^{-\frac{2}{3}}\begin{pmatrix}-it^{-\frac{1}{3}}(2c_l)^{\frac{1}{2}}\left(\frac{3}{2}\right)^{\frac{1}{3}}\left|g_{\textup{\uppercase\expandafter{\romannumeral1}}}^{(2)}(c_l)\right|^{\frac{1}{3}}s^2&it^{-\frac{2}{3}}\left[s^2\frac{\textup{Ai} '(s)}{\textup{Ai} (s)}-2s\right]\\-it^{-\frac{2}{3}}\left[s^2\frac{\textup{Ai} '(s)}{\textup{Ai} (s)}-2s\right]&it^{-\frac{1}{3}}(2c_l)^{\frac{1}{2}}\left(\frac{3}{2}\right)^{\frac{1}{3}}\left|g_{\textup{\uppercase\expandafter{\romannumeral1}}}^{(2)}(c_l)\right|^{\frac{1}{3}} s^2
  \end{pmatrix}\\
  &+\mathcal{O}(t^{\frac{1}{3}-2\epsilon}),
    \end{aligned}
    \end{equation*}
    where $g_{\textup{\uppercase\expandafter{\romannumeral1}}}^{(2)}(c_l)$ and $s$ are given by \eqref{equ:coefficients of gI} and \eqref{defS} respectively.
\end{proposition}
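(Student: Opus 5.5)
We start from the Beals--Coifman representation \eqref{equ:Merr_1 TI} and split $M^{(err)}_1$ according to the pieces of $\Gamma^{(err)}$:
\begin{equation*}
M^{(err)}_1=-\frac{1}{2\pi i}\oint_{\partial D_\varrho(c_l)\cup\partial D_\varrho(-c_l)}\left(V^{(err)}(z)-I\right)\dif z+R_1+R_2.
\end{equation*}
Here $R_1$ collects the contributions of $\Gamma^{(err)}$ away from the circles and of $\Gamma^{(3)}\cap(D_\varrho(c_l)\cup D_\varrho(-c_l))$. The term $R_2=-\frac{1}{2\pi i}\int_{\Gamma^{(err)}}\varpi(V^{(err)}-I)\dif z$ is the nonlinear part.

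\emph{Error terms.} By Proposition \ref{pro:est VE-I}, the parts off the circles are bounded by $\mathcal{O}(e^{-ct})$, by $\mathcal{O}(t^{-1})$ on the real line, and by $\omega_1=-1$ inside the disks, so $R_1=\mathcal{O}(t^{-1})$. For $R_2$, Cauchy--Schwarz gives $|R_2|\le\|\varpi\|_{L^2}\|V^{(err)}-I\|_{L^2}$. Using \eqref{estiCVE} together with $\gamma_2=-\epsilon/2$ on the circles, this is $\lesssim t^{1/3-\epsilon}\cdot t^{-\epsilon/2}$. That bound is not quite enough, so we will sharpen it. We keep the $L^\infty$ bound $t^{1/3-\epsilon}$ on the circles, note that the circle length is $\varrho\sim t^{-2/3+\epsilon}$, and use $\|\varpi\|_{L^2(\partial D)}\lesssim \|C_{V^{(err)}}I\|\lesssim t^{1/3-3\epsilon/2}$. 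This gives a product of order $t^{1/3-3\epsilon/2}\cdot t^{1/3-\epsilon}\cdot t^{-1/3+\epsilon/2}=t^{1/3-2\epsilon}$. That is exactly the claimed error, and the estimate explains the exponent $1/3-2\epsilon$.

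\emph{Main term via residues.} On $\partial D_\varrho(c_l)$, Lemma \ref{Lemma v3-vr-II}(a) and its proof give
\begin{equation*}
V^{(err)}-I=P^{(r)}(k)\frac{M^{P_{34}}_1(s)}{\zeta_r(k)}P^{(r)}(k)^{-1}+\mathcal{O}(P\zeta_r^{-2}P^{-1}).
\end{equation*}
Here $M^{P_{34}}_1$ is the $\zeta^{-1}$ coefficient from \eqref{eq:Psi-infinity}, conjugated by the triangular factor with $a(s)$, $Q^{(r)}$ and $(I-i\sigma_1)/\sqrt2$. Since $P^{(r)}$ is analytic in $D_\varrho(c_l)$ and $\zeta_r$ has a simple zero at $c_l$ with $\zeta_r'(c_l)=-(\tfrac32)^{2/3}t^{2/3}|g^{(2)}_{\rm I}(c_l)|^{2/3}$, the contour integral equals a residue:
\begin{equation*}
-\frac{1}{2\pi i}\oint=\frac{1}{\zeta_r'(c_l)}P^{(r)}(c_l)\widetilde M_1(s)P^{(r)}(c_l)^{-1},
\end{equation*}
with the sign fixed by the clockwise orientation in Figure \ref{fig:jump contour Merr RII}. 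The $\zeta_r^{-2}$ term is handled the same way and gives $\mathcal{O}(t^{-4/3}\cdot t^{1/3})$, which is absorbed. We then insert the explicit $P^{(r)}(c_l)$ from \eqref{prcl}; its columns scale like $t^{\pm1/6}$. Hence the diagonal entries receive $t^{-2/3}\cdot t^{1/3}=t^{-1/3}$ contributions and the off-diagonal entries receive $t^{-2/3}$ contributions.

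\emph{Evaluating the Painlev\'e data, the main obstacle.} The last step is to express the entries of $M^{P_{34}}_1(s;0,0,s)$, the model with parameter $\nu=0$ (i.e. $\arg r$ trivial at $c_l$, since $D_{\rm I}^{-2}r\to1$), through Airy functions. For this parameter the P34 model reduces to the Airy-type solution, with $u(s)=-\frac{d^2}{ds^2}\log\textup{Ai}(s)$-type relations and the Hamiltonian equal to $\textup{Ai}'/\textup{Ai}$ combined with $s^2$ terms. We will identify the $(1,2)$ entry combination with $\frac{1}{4}[s^2\textup{Ai}'(s)/\textup{Ai}(s)-2s]$ and the diagonal entries with $\mp\frac{i}{4}s^2$, following Appendix \ref{p34}. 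Finally we add the contribution of $\partial D_\varrho(-c_l)$, which follows from the symmetry $\widetilde M^{(l)}(k)=\sigma_1\widetilde M^{(r)}(-k)\sigma_1$. This symmetry produces the $\sigma_1$-conjugate with $k\mapsto -k$, which flips signs so that the diagonal entries come out opposite and the off-diagonal entries antisymmetric, matching the stated matrix. Tracking the constants and the sign of the half-power branches through $Q^{(r)}$ and $(I-i\sigma_1)$ is the delicate part; we will verify it against $q=2i(M_1)_{12}$ reproducing $f_{\rm I}$.
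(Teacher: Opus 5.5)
Your route is the paper's: split $M^{(err)}_1$ into the circle integrals and the remainders, bound the $\varpi$-term by Cauchy--Schwarz, evaluate the disk at $c_l$ by a residue using \eqref{prcl}, obtain the disk at $-c_l$ from $\widetilde M^{(l)}(k)=\sigma_1\widetilde M^{(r)}(-k)\sigma_1$, and insert \eqref{speentry-1}--\eqref{speentry-3}. As you describe it, however, the residue step would give the wrong $(1,2)$ entry. The matrix between $P^{(r)}$ and $P^{(r)}(k)^{-1}$ is $M^{P_{34}}_1(s)$ itself, not a conjugate $\widetilde M_1(s)$. In \eqref{equ:hat S trans} the factor $\begin{pmatrix}1&0\\ ia(s)&1\end{pmatrix}$ exactly cancels the prefactor $\begin{pmatrix}1&0\\ -ia(s)&1\end{pmatrix}$ of \eqref{eq:Psi-infinity}. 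The factors $Q^{(r)}$, $\frac{1}{\sqrt2}(I\mp i\sigma_1)$ and $\zeta_r^{\pm\sigma_3/4}$ are absorbed into $P^{(r)}$ and its inverse. Conjugating by the triangular factor would shift $(M^{P_{34}}_1)_{11}-(M^{P_{34}}_1)_{22}$ by $\pm 2a(s)^2$, because $(M^{P_{34}}_1)_{12}=ia(s)$. That difference, $\frac14\bigl[s^2\frac{\textup{Ai}'(s)}{\textup{Ai}(s)}-2s\bigr]$, is exactly what produces the $(1,2)$ entry of $M^{(err)}_1$, so you would introduce a spurious $s^4$ term.

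Your order count is also wrong for a single disk. Set $\alpha^2=(2c_l)^{1/2}(\tfrac32)^{1/3}|g^{(2)}_{\rm I}(c_l)|^{1/3}t^{1/3}$. Then \eqref{prcl} gives $P^{(r)}(c_l)\begin{pmatrix}0&1\\0&0\end{pmatrix}P^{(r)}(c_l)^{-1}=\frac{\alpha^2}{2}\begin{pmatrix}1&i\\ i&-1\end{pmatrix}$, so $(M^{P_{34}}_1)_{12}$ feeds $t^{-1/3}$ terms into all four entries of $I_3^{(r)}$. These off-diagonal terms drop out only after combining the two disks. The substitution $z\mapsto -z$ gives $I_3^{(l)}=-\sigma_1 I_3^{(r)}\sigma_1$, so $I_3$ is a multiple of $X-\sigma_1X\sigma_1$ with $X=P^{(r)}(c_l)M^{P_{34}}_1P^{(r)}(c_l)^{-1}$, and this kills symmetric off-diagonal parts. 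That cancellation is what makes $q-c_l=\mathcal{O}(t^{-2/3})$, and you must show it explicitly; column scalings alone do not give it.

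Smaller points:
\begin{itemize}
\item The model has $b=0$, i.e.\ $\nu=\tfrac12$, not $\nu=0$. This is the case where the Airy solution \eqref{equ:spe p2} exists.
\item Checking signs against $f_{\rm I}$ is circular, since $f_{\rm I}$ is read off from this proposition. The signs come directly from multiplying out with \eqref{prcl}.
\item Your ``sharpened'' bound on $R_2$ is just the paper's. $L^\infty$ times $\varrho^{1/2}$ on the circles reproduces $\gamma_2=-\epsilon/2$, and the decisive input is the first bound of \eqref{estiCVE}, which you import rather than derive. That bound is stronger than the $\mathcal{O}(t^{-\epsilon/2})$ obtained from $\|C_-\|\,\|V^{(err)}-I\|_{L^2}$ alone, which would give only $|R_2|\lesssim t^{-\epsilon}$.
\end{itemize}
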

\begin{proof}
    We first divide $M^{(err)}_1$ into four parts by
    \begin{align*}
        &I_1:=-\frac{1}{2\pi i}\int_{\Gamma^{(err)}}\varpi(z)\left(V^{(err)}(z)-I\right)\dif z, \\
        &I_2:=-\frac{1}{2\pi i}\int_{\Gamma^{(err)}\setminus\left(D_\varrho(c_l)\cup D_\varrho(-c_l)\right)}\left(V^{(err)}(z)-I\right)\dif z, \\
        &I_3:=-\frac{1}{2\pi i}\oint_{\partial D_\varrho(c_l)\cup \partial D_\varrho(-c_l)}\left(V^{(err)}(z)-I\right)\dif z,\\
        &I_4:=-\frac{1}{2\pi i}\int_{\Gamma^{(err)}\cap\left(D_\varrho(c_l)\cup D_\varrho(-c_l)\right)}\left(V^{(err)}(z)-I\right)\dif z.
       \end{align*}
The main contribution to
$M^{(err)}_1$ stems from $I_3$. For the other three parts, we have the following estimates:
$I_1=\mathcal{O}(t^{1/3-2\epsilon})$ by \eqref{estiCVE} and \eqref{equ:unifrom V-I}, $I_2=\mathcal{O}(e^{-ct})$ by \eqref{Ve-I-1}, and $I_4=\mathcal{O}(t^{-1})$ by \eqref{Ve-I-4}.

Now we turn to estimate $I_3$.
We split $I_3$ into two parts:
\begin{align*}
    I_3:=I_3^{(r)}+I_3^{(l)},
\end{align*}
where
$I_3^{(r)}$ and $I_3^{(l)}$ represent the contour integrals along $\partial D_\varrho(c_l)$ and $\partial D_\varrho(-c_l)$,
respectively.
A detailed analysis of $I_3^{(r)}$ will be provided below, and a similar approach can be applied to $I_3^{(l)}$.
From Lemma \ref{Lemma v3-vr-II} and \eqref{eq:Psi-infinity}, we have
\begin{equation}\label{equ:I_3R TI}
    \begin{aligned}
    I_3^{(r)}=&-\frac{1}{2\pi i}\oint_{\partial D_\varrho(c_l)}\left(V^{(err)}(z)-I\right)\dif z\\
    =&-\frac{1}{2\pi i}\oint_{\partial D_\varrho(c_l)}P^{(r)}(z)\left(\frac{M_1^{P_{34}}(s)}{\zeta_r(z)}+\mathcal{O}(\zeta_r^{-2}) \right) P^{(r)}(z)^{-1}\dif z\\
    =&-\frac{1}{2\pi i}\oint_{\partial D_\varrho(c_l)}P^{(r)}(z)t^{\frac{\hat\sigma_3}{6}}\left(\frac{M_1^{P_{34}}(s)}{\zeta_r(z)} \right) P^{(r)}(z)^{-1}\dif z+\mathcal{O}(t^{-1})\\
    =&-\frac{1}{2\pi i}\oint_{\partial D_\varrho(c_l)}\frac{t^{1/3}}{\zeta_r(z)}P^{(r)}(z)\begin{pmatrix}
        0&\left(M_1^{P_{34}}(s)\right)_{12}\\0&0
    \end{pmatrix} P^{(r)}(z)^{-1}\dif z\\
    &-\frac{1}{2\pi i}\oint_{\partial D_\varrho(c_l)}\frac{1}{\zeta_r(z)}P^{(r)}(z)\begin{pmatrix}
        \left(M_1^{P_{34}}(s)\right)_{11}&0\\0&\left(M_1^{P_{34}}(s)\right)_{22}
    \end{pmatrix} P^{(r)}(z)^{-1}\dif z+\mathcal{O}(t^{-1})\\
    =&-\frac{(\frac{3}{2})^{-2/3}\left|g_{\textup{\uppercase\expandafter{\romannumeral1}}}^{(2)}(c_l)\right|^{-2/3}}{t^{2/3}}P^{(r)}(c_l)\begin{pmatrix}
        \left(M_1^{P_{34}}(s)\right)_{11}&t^{1/3}\left(M_1^{P_{34}}(s)\right)_{12}\\0&\left(M_1^{P_{34}}(s)\right)_{22}
    \end{pmatrix} P^{(r)}(c_l)^{-1}+\mathcal{O}(t^{-1}),
\end{aligned}
\end{equation}
where we have used the residue theorem in the last equality. By the symmetry, we can calculate $I_3^{(l)}$ in a similar way
\begin{equation}\label{equ:I_3L TI}
     I_3^{(l)}=\frac{(\frac{3}{2})^{-2/3}\left|g_{\textup{\uppercase\expandafter{\romannumeral1}}}^{(2)}(c_l)\right|^{-2/3}}{t^{2/3}}\sigma_1P^{(r)}(c_l)\begin{pmatrix}
        \left(M_1^{P_{34}}(s)\right)_{11}&t^{1/3}\left(M_1^{P_{34}}(s)\right)_{12}\\0&\left(M_1^{P_{34}}(s)\right)_{22}
    \end{pmatrix} P^{(r)}(c_l)^{-1}\sigma_1+\mathcal{O}(t^{-1}).
\end{equation}
Combining the results from \eqref{equ:I_3R TI} and \eqref{equ:I_3L TI}, we obtain
\begin{align*}
  I_3=\frac{1}{4}\left(\frac{3}{2}\right)^{-\frac{2}{3}}\left|g_{\textup{\uppercase\expandafter{\romannumeral1}}}^{(2)}(c_l)\right|^{-\frac{2}{3}}\begin{pmatrix}-it^{-\frac{1}{3}}(2c_l)^{\frac{1}{2}}\left(\frac{3}{2}\right)^{\frac{1}{3}}\left|g_{\textup{\uppercase\expandafter{\romannumeral1}}}^{(2)}(c_l)\right|^{\frac{1}{3}}s^2&it^{-\frac{2}{3}}\left[s^2\frac{\textup{Ai} '(s)}{\textup{Ai} (s)}-2s\right]\\-it^{-\frac{2}{3}}\left[s^2\frac{\textup{Ai} '(s)}{\textup{Ai} (s)}-2s\right]&it^{-\frac{1}{3}}(2c_l)^{\frac{1}{2}}\left(\frac{3}{2}\right)^{\frac{1}{3}}\left|g_{\textup{\uppercase\expandafter{\romannumeral1}}}^{(2)}(c_l)\right|^{\frac{1}{3}} s^2
  \end{pmatrix}+\mathcal{O}(t^{-1}).
\end{align*}
Based on the analysis above, we obtain the desired result as given in \eqref{equ:Merr_1 RII}.
\end{proof}
\subsection{Proof of the part (\textup{\uppercase\expandafter{\romannumeral1}}) of Theorem \ref{thm:mainthm}}
By tracing back the transformations \eqref{def:M1 RI}, \eqref{def:M2 RI}, \eqref{def:M3 RI} and \eqref{def:Merr TI},
we conclude that for $k\in\mathbb{C}\setminus\Gamma^{(3)}$,
\begin{equation*}  M(k)=M^{(err)}(k)M^{(\infty)}(k)D_{\textup{\uppercase\expandafter{\romannumeral1}}}^{\sigma_3}(k)e^{-it(g_{\textup{\uppercase\expandafter{\romannumeral1}}}(k)-\theta(k))\sigma_3},
\end{equation*}
where $M^{(err)}$ and $M^{(\infty)}$ are defined in
\eqref{def:Merr TI} and \eqref{equ:sol of Minfty} respectively.
From the reconstruction formula stated in \eqref{equ:recovering formula}, we obtain
\begin{equation*}
q(x,t)=2i\left[(M^{(err)}_1)_{12}+\lim_{k\rightarrow\infty}k\left(\Delta_{l}(k)\right)_{12}\right].
\end{equation*}
It then follows from \eqref{equ:Delta_j} and Proposition \eqref{prop: result of Merr_1 TI} that part (\textup{\uppercase\expandafter{\romannumeral1}}) of Theorem \ref{thm:mainthm} holds.

\section{Asymptotic analysis of the RH problem for $M$ in $\mathcal{T}_{\textup{\uppercase\expandafter{\romannumeral2}}}$}\label{sec:asymptotic analysis in RII}
This section is devoted to the long-time asymptotic analysis
of the RH problem \ref{RHP:basic RHP} in the region $\mathcal{T}_{\textup{\uppercase\expandafter{\romannumeral2}}}$. It is assumed that $0<t^{2/3}\left(\xi+\frac{c_r^2}{2}\right)<C$ with $C>0$ throughout this section since the analysis on the other half region of $\mathcal{T}_{\textup{\uppercase\expandafter{\romannumeral2}}}$ is similar. As the analysis of this region is similar with that in Section \ref{sec:asymptotic analysis in RI}, we only sketch the main procedures.

\subsection{First transformation: $M\rightarrow M^{(1)}$}

We begin with the introduction of the $g$-function \cite{gfunction, G2001,GT2002},  to control the exponentially growing off-diagonal factors in the jump matrix \eqref{equ:jump V(k)}.
For $\xi\in\mathcal{T}_{\textup{\uppercase\expandafter{\romannumeral2}}}$, we introduce
\begin{equation}\label{equ:g function TII}
g_{\textup{\uppercase\expandafter{\romannumeral2}}}(k)=g_{\textup{\uppercase\expandafter{\romannumeral2}}}(k;\xi):=(4k^2+12\xi+2c_{r}^2)X_{r}(k), \quad X_{r}(k)=\sqrt{k^2-c_{r}^2},
\end{equation}
where the branch of the square root being chosen such that $X_r(k)=k+\mathcal{O}(k^{-1})$ as $k\rightarrow\infty$.
It is readily verified that $g_{\textup{\uppercase\expandafter{\romannumeral2}}}$ has the following  properties.
%properties for $g_{\textup{\uppercase\expandafter{\romannumeral2}}}$ defined in \eqref{equ:g function TII} hold true.
\begin{proposition}The function $g_{\textup{\uppercase\expandafter{\romannumeral2}}}$  defined in \eqref{equ:g function TII} satisfies the following properties:
    \begin{itemize}
        \item $g_{\textup{\uppercase\expandafter{\romannumeral2}}}(k)$ is holomorphic for $k\in\mathbb{C}\setminus[-c_{r},c_{r}]$.
        \item As $k\rightarrow\infty$ in $\mathbb{C}\setminus[-c_{r},c_{r}]$,
        we have $g_{\textup{\uppercase\expandafter{\romannumeral2}}}(k)=\theta(k)+\mathcal{O}(k^{-1})$.
        \item For $k\in(-c_{r},c_{r})$, $g_{\textup{\uppercase\expandafter{\romannumeral2}},+}(k)+g_{\textup{\uppercase\expandafter{\romannumeral2}},-}(k)=0$.
        \item As $k\to\pm c_r$, we have
        \begin{equation}\label{g_II asy cr}
             g_{\textup{\uppercase\expandafter{\romannumeral2}}}(k)=6(\pm2c_r)^{\frac{1}{2}}(c_r^2+2\xi)(k\mp c_r)^{\frac{1}{2}}+\left[3(\pm2c_r)^{-\frac{1}{2}}(c_r^2+2\xi)+4(\pm2c_r)^{\frac{3}{2}}\right](k\mp c_r)^{\frac{3}{2}}+\mathcal{O}((k\mp c_r)^{\frac{5}{2}}).
        \end{equation}
    \end{itemize}
\end{proposition}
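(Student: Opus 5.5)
The four properties of $g_{\textup{\uppercase\expandafter{\romannumeral2}}}$ are elementary consequences of the explicit formula \eqref{equ:g function TII}. The proof would mirror the corresponding proposition for $g_{\textup{\uppercase\expandafter{\romannumeral1}}}$, with $c_l$ replaced by $c_r$.

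\emph{Holomorphy.} First I would note that $4k^2+12\xi+2c_r^2$ is a polynomial. Also $X_r(k)=\sqrt{k^2-c_r^2}$, normalized by $X_r(k)=k+\mathcal{O}(k^{-1})$, can be written as $(k-c_r)^{1/2}(k+c_r)^{1/2}$ with principal branches. This product is single-valued and holomorphic off $[-c_r,c_r]$, because on $(-\infty,-c_r)$ both factors change sign across the axis and the two sign changes cancel. Hence $g_{\textup{\uppercase\expandafter{\romannumeral2}}}$ is holomorphic on $\mathbb{C}\setminus[-c_r,c_r]$.

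\emph{Behaviour at infinity.} Next I would expand $X_r(k)=k-\frac{c_r^2}{2k}+\mathcal{O}(k^{-3})$. Multiplying by $4k^2+12\xi+2c_r^2$ gives
\[
4k^3+12\xi k+2c_r^2k-2c_r^2k+\mathcal{O}(k^{-1})=\theta(k)+\mathcal{O}(k^{-1}).
\]
The choice of the constant $2c_r^2$ is exactly what cancels the linear term.

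\emph{Jump on the cut.} On $(-c_r,c_r)$ the boundary values satisfy $X_{r,+}=-X_{r,-}=\pm i\sqrt{c_r^2-k^2}$, while the polynomial factor is continuous there. Therefore $g_{\textup{\uppercase\expandafter{\romannumeral2}},+}+g_{\textup{\uppercase\expandafter{\romannumeral2}},-}=0$.

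\emph{Local expansion at $\pm c_r$.} This is the only step requiring real computation, and it is where I expect the only genuine care to be needed: keeping the branches consistent in the $-c_r$ case. Put $u=k\mp c_r$. Then
\[
X_r=u^{1/2}(u\pm 2c_r)^{1/2}=(\pm2c_r)^{1/2}u^{1/2}\Big(1+\tfrac{u}{\pm4c_r}+\mathcal{O}(u^2)\Big),
\]
where $(\pm2c_r)^{1/2}$ is the branch compatible with the product representation used in the holomorphy step. Similarly,
\[
4k^2+12\xi+2c_r^2=6(c_r^2+2\xi)\pm8c_r u+\mathcal{O}(u^2).
\]
Multiplying the two expansions, the coefficient of $u^{1/2}$ is $6(\pm2c_r)^{1/2}(c_r^2+2\xi)$. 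The coefficient of $u^{3/2}$ is
\[
(\pm2c_r)^{1/2}\Big[\tfrac{6(c_r^2+2\xi)}{\pm4c_r}\pm8c_r\Big]=3(\pm2c_r)^{-1/2}(c_r^2+2\xi)+4(\pm2c_r)^{3/2},
\]
using $(\pm2c_r)^{1/2}/(\pm4c_r)=\tfrac12(\pm2c_r)^{-1/2}$ and $(\pm2c_r)^{1/2}(\pm8c_r)=4(\pm2c_r)^{3/2}$. The remainder is $\mathcal{O}(u^{5/2})$. This yields \eqref{g_II asy cr}.
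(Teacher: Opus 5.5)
Your proposal is correct and follows the paper's approach: the paper gives no argument beyond saying these properties are ``readily verified'' from the explicit formula $(4k^2+12\xi+2c_r^2)X_r(k)$, and your expansions carry out exactly that verification. The one point worth stating explicitly is that at $-c_r$ the constant $(-2c_r)^{1/2}$ means the limit of $(k-c_r)^{1/2}$ from the relevant half-plane, i.e.\ $\pm i\sqrt{2c_r}$ for $k\in\mathbb{C}^{\pm}$. This is what your ``compatible branch'' remark amounts to, and it is equivalent to obtaining the expansion at $-c_r$ from the one at $c_r$ via the oddness $g_{\mathrm{II}}(-k)=-g_{\mathrm{II}}(k)$.
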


A straightforward calculation shows that $\pm \eta_{r}(\xi)=\pm \sqrt{-3\xi-c_r^2/2}$ are the two real zeros
for $\xi\in\mathcal{T}_{\textup{\uppercase\expandafter{\romannumeral2}}}$.
Noticing that $\im g_{\textup{\uppercase\expandafter{\romannumeral2}},+}>0$ for $k\in(-c_r,-\eta_r)\cup(\eta_r, c_r)$,
then the signature table for $\im g_{\textup{\uppercase\expandafter{\romannumeral2}}}$ is illustrated in Figure \ref{fig:signs img RIII}.

\begin{figure}[htbp]
\begin{center}
    \tikzset{every picture/.style={line width=0.75pt}} %set default line width to 0.75pt
    \begin{tikzpicture}[x=0.75pt,y=0.75pt,yscale=-1,xscale=1]
    %uncomment if require: \path (0,300); %set diagram left start at 0, and has height of 300
    %Curve Lines [id:da23990346379567207]
    \draw    (372,52) .. controls (346,52) and (345,233) .. (378,233) ;
    %Straight Lines [id:da27525634770111207]
    \draw    (161,139) -- (477,139) ;
    %Curve Lines [id:da4970333595425316]
    \draw    (248,50) .. controls (273,51) and (273,231) .. (245,232) ;
    % Text Node
    \draw (245,143.4) node [anchor=north west][inner sep=0.75pt]  [font=\tiny]  {\footnotesize$-\eta_r$};
    \fill (266,139) circle (1.2pt);
    % Text Node
    \draw (356,144.4) node [anchor=north west][inner sep=0.75pt]  [font=\tiny]  {\footnotesize$\eta_r$};
    \fill (353,139) circle (1.2pt);
    % Text Node
    \draw (171,143.4) node [anchor=north west][inner sep=0.75pt]  [font=\tiny]  {\footnotesize$-c_{l}$};
    % Text Node
    \draw (430,144.4) node [anchor=north west][inner sep=0.75pt]  [font=\tiny]  {\footnotesize$c_{l}$};
    % Text Node
    \draw (202.18,143.4) node [anchor=north west][inner sep=0.75pt]  [font=\tiny,rotate=-1.56]  {\footnotesize$-c_{r}$};
    % Text Node
    \draw (400,144.4) node [anchor=north west][inner sep=0.75pt]  [font=\tiny]  {\footnotesize$c_{r}$};
    % Text Node
    \draw (443,89.4) node [anchor=north west][inner sep=0.75pt]    {$+$};
    % Text Node
    \draw (448,170.4) node [anchor=north west][inner sep=0.75pt]    {$-$};
    % Text Node
    \draw (296,77.4) node [anchor=north west][inner sep=0.75pt]    {$-$};
    % Text Node
    \draw (301,189.4) node [anchor=north west][inner sep=0.75pt]    {$+$};
    % Text Node
    \draw (172,172.4) node [anchor=north west][inner sep=0.75pt]    {$-$};
    % Text Node
    \draw (175,90.4) node [anchor=north west][inner sep=0.75pt]    {$+$};
    % Text Node

     \fill (177,139) circle (1.2pt);
    % Text Node
\fill (210,139) circle (1.2pt);
    % Text Node
\fill (435,139) circle (1.2pt);
    % Text Node
  \fill (405,139) circle (1.2pt);
    \end{tikzpicture}
\caption{ The signature table of $\im g_{\textup{\uppercase\expandafter{\romannumeral2}}}$ for $\xi\in\mathcal{T}_{\textup{\uppercase\expandafter{\romannumeral2}}}$,
where ``$+$'' and ``$-$'' denote $\im g_{\textup{\uppercase\expandafter{\romannumeral2}}} > 0$ and $\im g_{\textup{\uppercase\expandafter{\romannumeral2}}} < 0$ in the corresponding regions, respectively.}\label{fig:signs img RIII}
\end{center}
\end{figure}

\subsubsection{RH problem for $M^{(1)}$}
With the aid of  function $g_{\textup{\uppercase\expandafter{\romannumeral2}}}$ defined in \eqref{equ:g function TII},
we define $M^{(1)}$ by
\begin{equation}\label{def:M1 RIII}
 M^{(1)}(k)=   M^{(1)}(k;x,t):=M(k)e^{it\left(g_{\textup{\uppercase\expandafter{\romannumeral2}}}(k)-\theta(k)\right)\sigma_3}.
\end{equation}
Then RH problem for $M^{(1)}$ reads as follows:
\begin{RHP}\label{RHP:M1 RIII}
\hfill
\begin{itemize}
    \item $M^{(1)}(k)$ is holomorphic for $k\in\mathbb{C}\setminus\mathbb{R}$.
    \item $M^{(1)}(k)$ has continuous boundary values $M_{\pm}^{(1)}(k)$ on $\mathbb{R}$ with the jump condition
    \begin{equation*}
        M^{(1)}_{+}(k)=M^{(1)}_{-}(k)V^{(1)}(k), \quad k\in\mathbb{R},
    \end{equation*}
    where
    \begin{equation*}
        V^{(1)}(k)=
            \begin{cases}
            \begin{pmatrix}1-r(k)r^*(k) & -r^*(k)e^{-2itg_{\textup{\uppercase\expandafter{\romannumeral2}}}(k)}\\ r(k)e^{2itg_{\textup{\uppercase\expandafter{\romannumeral2}}}(k)} & 1\end{pmatrix}, &k\in(-\infty,-c_l)\cup(c_l,+\infty),  \\
            \begin{pmatrix}0 & -r_{-}^*(k)e^{-2itg_{\textup{\uppercase\expandafter{\romannumeral2}}}(k)}\\ r_{+}(k)e^{2itg_{\textup{\uppercase\expandafter{\romannumeral2}}}(k)} & 1 \end{pmatrix}, &k\in (-c_l,-c_{r})\cup(c_{r},c_l),\\
            \begin{pmatrix}0 & -1 \\ 1 & 0\end{pmatrix},  &k\in (-c_{r}, c_{r}).
            \end{cases}
    \end{equation*}
    \item As $k\rightarrow\infty$ in $\mathbb{C}\setminus\mathbb{R}$, we have $M^{(1)}(k)=I+\mathcal{O}(k^{-1})$.
    \item $M^{(1)}(k)$ admits the same singular behavior as $M(k)$ at branch points $\pm c_{r}$.
\end{itemize}
\end{RHP}

\subsection{Second transformation: $M^{(1)}\to M^{(2)}$}
In contrast to the previous sections, the signature table of $\im g_{\textup{\uppercase\expandafter{\romannumeral2}}}$ in  Figure \ref{fig:signs img RIII} indicates that it is not necessary to  introduce an auxiliary $D$ function
to open lenses. Nevertheless, we still need to perform an analytic approximation for the reflection coefficient $r(k)$.
\begin{proposition}[Analytic approximation of $r$]\label{analytic extension of r RIII}

There exist continuous functions
\begin{equation*}
    r_a: \left(\overline{U_1^{(2)}}\cup\overline{U_2^{(2)}}\right) \times \mathcal{T}_\textup{\uppercase\expandafter{\romannumeral2}} \rightarrow \mathbb{C} \; \text { and } \; r_r: \left((-\infty,-\eta_r)\cup(\eta_r,+\infty)\right) \times \mathcal{T}_\textup{\uppercase\expandafter{\romannumeral2}}\rightarrow \mathbb{C},
\end{equation*}
which satisfy the following properties:
\begin{itemize}
    \item [\rm (a)] $r(k)=r_a(k)+r_r(k)$, where $r_a(k)=r_a(k;\xi)$ and $r_r(k)=r_r(k;\xi)$ for all $(k;\xi) \in \{(-\infty,-\eta_r)\cup(\eta_r,+\infty)\} \times \mathcal{T}_\textup{\uppercase\expandafter{\romannumeral2}}$.
    \item [\rm (b)] $r_a(-k)=r_a^*(k)$ for $k\in\overline{U_1^{(2)}}\cup\overline{U_2^{(2)}}.$
    \item [\rm (c)]  For $\xi \in \mathcal{T}_{\textup{\uppercase\expandafter{\romannumeral2}}}$,  $r_a(k)$ is holomorphic in $U_1^{(2)}\cup U_2^{(2)}$. Moreover, for $(k;\xi)\in\left(\overline{U_1^{(2)}}\cup\overline{U_2^{(2)}}\right)\times \mathcal{T}_{\textup{\uppercase\expandafter{\romannumeral2}}}$, we have
\begin{equation*}
    \left|r_a(k)-r\left(\pm c_r\right)\right| \lesssim \left|k\mp c_r\right|^{1/2} e^{\frac{t}{4}|\operatorname{Im} g_{\textup{\uppercase\expandafter{\romannumeral2}}}(k)|},
\end{equation*}
and
\begin{equation*}
    \left|r_a(k)\right| \lesssim \frac{e^{t|\operatorname{Im} g_{\textup{\uppercase\expandafter{\romannumeral2}}}(k)|}}{1+|k|^2}.
\end{equation*}
\item [\rm (d)] For $\xi \in \mathcal{T}_{\textup{\uppercase\expandafter{\romannumeral2}}}$, the function $r_r\in L^p\left((-\infty,-c_r)\cup(c_r,+\infty)\right),\; p\in[1,+\infty]$, and as $t \rightarrow +\infty$,
\begin{equation*}
    \left\|r_r(k)\right\|_{L^p\left((-\infty,-c_r)\cup(c_r,+\infty)\right)}=  \mathcal{O}\left(t^{-1}\right).
\end{equation*}
\end{itemize}
\end{proposition}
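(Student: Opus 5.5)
The construction will follow the standard Lenells-type decomposition of $r$ on each half-line away from the branch points, done separately on the pieces $(c_r,+\infty)$ and $(-\infty,-c_r)$; the symmetry in (b) is then imposed at the end. On $(c_r,+\infty)$ I would write $r(k)=\sum_{m}\beta_m (k-c_r)^{m/2}$ locally and use the expansion in item (e) of Proposition \ref{prop:a,b,r}. Near $c_r$ the data are only smooth in the variable $(k-c_r)^{1/2}$, which is why the estimate in (c) involves $|k-c_r|^{1/2}$ rather than $|k-c_r|$. The same expansion holds near $c_l$, and $r$ is $\mathcal{C}^8$ elsewhere. I first choose a rational function, or a finite combination of $(k-c_r)^{m/2}$ times $(k+i)^{-N}$, that matches $r$ to sufficiently high order at $k=c_r$, at $k=\pm c_l$ (from both sides), and at infinity. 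It should also be holomorphic in the lens regions $U_1^{(2)},U_2^{(2)}$, with the cut of $(k-c_r)^{1/2}$ placed outside these regions. Call this approximant $r_0$. The remainder $h=r-r_0$ then vanishes to high order at these points.

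Next I would pass to the variable $\phi=g_{\textup{\uppercase\expandafter{\romannumeral2}}}(k)$. On $(c_r,\infty)$ this map is monotone real except on the tiny interval between $c_r$ and $\eta_r$, whose length is $\mathcal{O}(t^{-2/3})$ and which can be absorbed into $r_r$. I write $h/(k+i)^{2}$ as a function of $\phi$. Because $h$ vanishes to high order at the endpoints, this function lies in a weighted $H^s$ space, using the $\mathcal{C}^8$ regularity and the $\mathcal{O}(k^{-5})$ decay of derivatives. I then take its Fourier transform $\hat h(s)$ and split $\int_{\mathbb R}$ into $\int_{s>-t/4}$ and $\int_{s<-t/4}$. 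The first part defines the holomorphic piece $h_a$, which extends into the region where $\operatorname{Im} g_{\textup{\uppercase\expandafter{\romannumeral2}}}>0$ with the bound $|h_a|\lesssim e^{\frac t4|\operatorname{Im}g|}/|k+i|^2$. The second part defines $h_r$, and Plancherel together with the Sobolev weight gives $\|h_r\|_{L^p}\lesssim t^{-1}$, provided enough vanishing order and smoothness is available; eight derivatives suffice for this. I then set $r_a=r_0+h_a$ and $r_r=h_r$ (plus the small-interval piece), which gives (a) and (d). For the first estimate in (c), I combine $|r_0(k)-r(c_r)|\lesssim|k-c_r|^{1/2}$ with the vanishing of $h_a$ at $c_r$ up to the exponential factor. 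The second estimate in (c) comes from the decay of $r_0$ at infinity together with the bound on $h_a$.

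For the negative half-line I would not repeat the construction. Instead I define $r_a(k):=\overline{r_a(-\bar k)}$, and likewise for $r_r$. This is consistent with $r(k)=\overline{r(-\bar k)}$ from \eqref{symmetry a,b,r}, and it yields (b) directly, since the lens regions are symmetric under $k\mapsto-\bar k$.

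The main obstacle is the uniformity in $\xi\in\mathcal{T}_{\textup{\uppercase\expandafter{\romannumeral2}}}$ near $c_r$. There the phase $g$ degenerates: $g^{(1)}(c_r)=\mathcal{O}(t^{-2/3})$, so $\phi\sim (k-c_r)^{3/2}$ rather than $(k-c_r)^{1/2}$. The change of variables $k\mapsto\phi$ therefore has a Jacobian that degenerates at $c_r$, and the Sobolev norm of $h$ as a function of $\phi$ must be checked to stay bounded independently of $\xi$. I would handle this by imposing a higher vanishing order on $h$ at $c_r$, which is possible with the seven-term expansion in item (e) of Proposition \ref{prop:a,b,r}. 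With that, the factors $(k-c_r)^{-1/2}$ coming from $d\phi/dk$ are cancelled, and the bounds become uniform.
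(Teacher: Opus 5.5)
Your route is the one the paper intends. The paper gives no proof of this proposition; for the $\mathcal{T}_{\mathrm{I}}$ analogue it only refers to Lemma 5.2 of Lenells' DNLS paper. That lemma is the Deift--Zhou scheme: an explicit approximant, followed by a Fourier splitting of the remainder in the phase variable $\phi=g_{\mathrm{II}}(k)$ at frequency $-t/4$. Your reflection $r_a(k):=\overline{r_a(-\bar k)}$ for (b) is also fine, since $g_{\mathrm{II}}(-\bar k)=-\overline{g_{\mathrm{II}}(k)}$ preserves $\operatorname{Im} g_{\mathrm{II}}$.

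\textbf{Gap in (c).} The first estimate in (c) does not follow from your construction. You Fourier-split $h$ times a power of $(k+i)$ only, say $F=h\,(k+i)^2$ as a function of $\phi$, extended by zero to $\phi<0$. (Your ``$h/(k+i)^2$'' is inconsistent with your bound $|h_a|\lesssim e^{\frac t4|\operatorname{Im} g_{\mathrm{II}}|}/|k+i|^2$.) Since $F(0)=0$, you get $F_a(0)=-F_r(0)=-\frac{1}{2\pi}\int_{-\infty}^{-t/4}\hat F(s)\,\mathrm{d}s$. This is small but generically nonzero. So $r_a(c_r)-r(c_r)=h_a(c_r)\neq 0$, and $|r_a(k)-r(c_r)|\lesssim|k-c_r|^{1/2}e^{\frac t4|\operatorname{Im} g_{\mathrm{II}}|}$ fails as $k\to c_r$.

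The missing ingredient is the Deift--Zhou weight. Split $h/\alpha$ instead, where $\alpha$ is analytic in $U_1^{(2)}$, vanishes like $(k-c_r)^{1/2}$ at $c_r$, and decays like $k^{-2}$. For example, take $\alpha(k)=(k-c_r)^{1/2}(k+i)^{-5/2}$ with principal branches. Then set $h_a=\alpha\,(h/\alpha)_a$ and $h_r=\alpha\,(h/\alpha)_r$, and both inequalities in (c) follow at once.

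\textbf{Approximant at $c_l$.} Your approximant class cannot do what you ask of it at $c_l$. Because $b_{l,1}\neq 0$, $r$ has a genuine square-root branch point there. Its one-sided expansions are $\sum b_{l,m}(k-c_l)^{m/2}$ and $\sum i^m b_{l,m}(c_l-k)^{m/2}$, and no rational function or combination of $(k-c_r)^{m/2}(k+i)^{-N}$ matches both. You must include terms $(k-c_l)^{m/2}$ with the principal branch. These are analytic in $U_1^{(2)}$, and their boundary values from $\mathbb{C}^+$ produce exactly the factors $i^m$. Without them, $h\sim(k-c_l)^{1/2}$, so $h/\alpha\notin H^1(\mathrm{d}\phi)$ and the Fourier tail is only $\mathcal{O}(t^{-1/2})$, not $\mathcal{O}(t^{-1})$.

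\textbf{Uniformity count.} The weight costs half an order of vanishing, so the count must be redone. Matching through order $(k-c_r)^{7/2}$ gives $h/\alpha=o((k-c_r)^3)$. For $\xi>-c_r^2/2$ one has $g_{\mathrm{II}}'(k)=12k\,(k^2-c_r^2+\xi+\tfrac{c_r^2}{2})/X_r(k)\geq 12k\sqrt{k^2-c_r^2}$ on $(c_r,\infty)$. Hence each $\phi$-derivative costs $|k-c_r|^{-3/2}$, while $\mathrm{d}\phi\lesssim(|k-c_r|^{1/2}+(\xi+\tfrac{c_r^2}{2})|k-c_r|^{-1/2})\,\mathrm{d}k$. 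A bound in $H^2(\mathrm{d}\phi)$ uniform in $\xi$ therefore needs vanishing order greater than $11/4$. You have this, but only barely. It gives a tail $\mathcal{O}(t^{-3/2})$ and hence (d). It also uses differentiated versions of the expansions in item (e), which you should state as an assumption.

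\textbf{Exceptional interval.} The same inequality shows $\phi$ is strictly monotone on all of $(c_r,\infty)$. In this half-region $\eta_r<c_r$, so there is no exceptional interval. That is fortunate: absorbing such an interval into $r_r$ would break (d), since $|r|=1$ just to the right of $c_r$.
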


Now we are ready to open lenses. Introduce a transformation
\begin{equation}\label{def:M2 RIII}
    M^{(2)}(k)= M^{(2)}(k;x,t):=M^{(1)}(k)G(k),
\end{equation}
where
\begin{equation*}
    G(k):=
        \begin{cases}
        \begin{pmatrix}1 & 0 \\ -r_a(k)e^{2itg_{\textup{\uppercase\expandafter{\romannumeral2}}}(k)} & 1\end{pmatrix}, & k\in U^{(2)}_1\cup U^{(2)}_2, \\
        \begin{pmatrix}1 & -r_a^{*}(k)e^{-2itg_{\textup{\uppercase\expandafter{\romannumeral2}}}(k)} \\ 0 & 1\end{pmatrix}, & k\in U_1^{(2)*}\cup U_2^{(2)*}, \\
        I, &\textnormal{elsewhere}.
        \end{cases}
\end{equation*}
Then the RH problem for $M^{(2)}$ is listed below.
\begin{RHP}
    \hfill
\begin{itemize}
    \item $M^{(2)}(k)$ is holomorphic for $k\in\mathbb{C}\setminus\Gamma^{(2)}$, where $\Gamma^{(2)}:=\cup_{j=1}^{2}(\Gamma^{(2)}_j\cup\Gamma_j^{(2)*})\cup\mathbb{R}$;
    see Figure \textup{\ref{fig:U_j domain RIII}} for an illustration.
    \item $M^{(2)}(k)$ has continuous boundary values $M_{\pm}^{(2)}(k)$ on $\Gamma^{(2)}$ with the jump condition
    \begin{equation*}
        M^{(2)}_{+}(k)=M^{(2)}_{-}(k)V^{(2)}(k),
    \end{equation*}
    where
    \begin{equation*}
        V^{(2)}(k)=
            \begin{cases}
            \begin{pmatrix} 1 & 0\\ r_a(k)e^{2itg_{\textup{\uppercase\expandafter{\romannumeral2}}}(k)} & 1\end{pmatrix}, &k\in\Gamma^{(2)}_1\cup\Gamma^{(2)}_2,  \\
            \begin{pmatrix} 1 & -r_a^*(k)e^{-2itg_{\textup{\uppercase\expandafter{\romannumeral2}}}(k)}\\ 0 & 1\end{pmatrix}, & k\in\Gamma^{(2)*}_1\cup\Gamma^{(2)*}_2,  \\
            \begin{pmatrix}
                1-r_r(k)r^*_r(k)&-r_r^*(k)e^{-2itg_{\textup{\uppercase\expandafter{\romannumeral2}}}(k)}\\r_r(k)e^{2itg_{\textup{\uppercase\expandafter{\romannumeral2}}}(k)}&1
            \end{pmatrix},&k\in(-\infty,-c_r)\cup(c_r,+\infty),\\
            \begin{pmatrix}0 & -1 \\ 1 & 0\end{pmatrix},  & k\in (-c_{r}, c_{r}).
            \end{cases}
    \end{equation*}
    \item As $k\rightarrow\infty$ in $\mathbb{C}\backslash\Gamma^{(2)}$, we have $M^{(2)}(k)=I+\mathcal{O}(k^{-1})$.
    \item As $k\rightarrow\pm c_{r}$, $M^{(2)}(k)=\mathcal{O}((k\mp c_{r})^{-1/4})$.
\end{itemize}
\end{RHP}

\begin{figure}[H]
\centering
\tikzset{every picture/.style={line width=0.75pt}}
\begin{tikzpicture}[x=0.75pt,y=0.75pt,yscale=-1,xscale=1]

% 水平主线（箭头向左，反向）
\draw[postaction={decorate}, decoration={markings, mark=at position 0.5 with {\arrow{latex reversed}}}]
      (481,152)--(171,150);

% 右上斜线（箭头反向）
\draw[postaction={decorate}, decoration={markings, mark=at position 0.5 with {\arrow{latex reversed}}}]
     (588,222)--(481,152) ;

% 右下斜线（箭头同向）
\draw[postaction={decorate}, decoration={markings, mark=at position 0.5 with {\arrow{latex}}}]
     (481,152)--(582,86) ;

% 左上斜线（箭头同向）
\draw[postaction={decorate}, decoration={markings, mark=at position 0.5 with {\arrow{latex}}}]
     (64.26,79.61)--(171,150) ;

% 左下斜线（箭头反向）
\draw[postaction={decorate}, decoration={markings, mark=at position 0.5 with {\arrow{latex reversed}}}]
     (171,150)--(69.76,215.63) ;

% 右端水平线（箭头反向）
\draw[postaction={decorate}, decoration={markings, mark=at position 0.5 with {\arrow{latex reversed}}}]
     (604,151)--(481,152) ;

% 左端水平线（箭头反向）
\draw[postaction={decorate}, decoration={markings, mark=at position 0.5 with {\arrow{latex reversed}}}]
    (171,150)--(48,151) ;

% 其余图形保持不变
\draw[dash pattern={on 0.84pt off 2.51pt}] (431,63) .. controls (405,63) and (404,244) .. (437,244);
\draw[dash pattern={on 0.84pt off 2.51pt}] (225,65) .. controls (250,66) and (250,246) .. (222,247);

% 文本与标记点
\node[font=\tiny, anchor=north west, inner sep=0.75pt] at (397,156.4) {\footnotesize$\eta_{r}$};
\fill (412,152) circle (1.2pt);
\node[font=\tiny, anchor=north west, inner sep=0.75pt] at (221,154.4) {\footnotesize$-\eta_{r}$};
\fill (244,150.5) circle (1.2pt);
\node[font=\tiny, anchor=north west, inner sep=0.75pt] at (173,154.4) {\footnotesize$-c_{r}$};
\node[font=\tiny, anchor=north west, inner sep=0.75pt] at (476,156.4) {\footnotesize$c_{r}$};
\node[font=\tiny, anchor=north west, inner sep=0.75pt] at (541,73.4) {\footnotesize$\Gamma_{1}^{(2)}$};
\node[font=\tiny, anchor=north west, inner sep=0.75pt] at (542,209.4) {\footnotesize$\Gamma_{1}^{(2)*}$};
\node[font=\tiny, anchor=north west, inner sep=0.75pt] at (106,78.4) {\footnotesize$\Gamma_{2}^{(2)}$};
\node[font=\tiny, anchor=north west, inner sep=0.75pt] at (95,204.4) {\footnotesize$\Gamma_{2}^{(2)*}$};
\node[font=\tiny, anchor=north west, inner sep=0.75pt] at (543,114.4) {\footnotesize$U_{1}^{(2)}$};
\node[font=\tiny, anchor=north west, inner sep=0.75pt] at (546,162.4) {\footnotesize$U_{1}^{(2)*}$};
\node[font=\tiny, anchor=north west, inner sep=0.75pt] at (85,112.4) {\footnotesize$U_{2}^{(2)}$};
\node[font=\tiny, anchor=north west, inner sep=0.75pt] at (85,163.4) {\footnotesize$U_{2}^{(2)*}$};

\fill (170,150) circle (1.2pt);
\fill (481,152) circle (1.2pt);

\end{tikzpicture}
\caption{The jump contours of RH problem for $M^{(2)}$ for $\xi\in\mathcal{T}_{\textup{\uppercase\expandafter{\romannumeral2}}}$.}
\label{fig:U_j domain RIII}
\end{figure}

\subsection{Analysis of RH problem for $M^{(2)}$}
Notice that $V^{(2)}(k)\rightarrow I$ as $t\rightarrow+\infty$ on the contours
$\Gamma^{(2)}_j\cup\Gamma_j^{(2)*}$ for $j=1,2$. Moreover, by item (d) in Proposition \ref{analytic extension of r RIII},
we have that, as $t\rightarrow +\infty$, $V^{(2)}(k)\to I$ for $k\in(-\infty,-c_r)\cup (c_r, +\infty)$.
Therefore, it follows that $M^{(2)}$ is approximated, to the leading order, by the global parametrix $M^{(\infty)}$ given below.

% Note that $V^{(2)}\rightarrow I$ as $t\rightarrow+\infty$ on the contours
% $\Gamma^{(2)}_j\cup\Gamma_j^{(2)*}$ for $j=1,2$, it follows that $M^{(2)}$ is approximated,
% to the leading order, by the global parametrix $M^{(\infty)}$ given below. The sub-leading
% contribution stems from the local behavior near the branch points $\pm c_{r}$, which is well
% approximated by the Bessel parametrix.

\subsubsection{Global parametrix}
The global parametrix satisfies the following RH problem.
%As $t$ large enough, the jump matrix $V^{(2)}$ approaches
%\begin{equation}\label{equ:jump Vinfty RIII}
 %   V^{(\infty)}=\begin{pmatrix} 0 & -1 \\ 1 & 0 \end{pmatrix}, \quad k\in(-c_{r},c_{r}).
%\end{equation}
%For $k\in\mathbb{C}\backslash[-c_{r},c_{r}]$, $V^{(2)}\rightarrow I$ as $t\rightarrow \infty$.
%Then it is naturally established the following parametrix for $M^{(\infty)}$.
\begin{RHP}\label{RHP:Minfty RIII}
\hfill
\begin{itemize}
    \item $M^{(\infty)}(k)$ is holomorphic for $k\in\mathbb{C}\setminus[-c_{r},c_{r}]$.
    \item $M^{(\infty)}(k)$ has continuous boundary values on $(-c_{r},c_{r})$ satisfying the following jump condition
    \begin{align*}
        M_{+}^{(\infty)}(k)=M_{-}^{(\infty)}(k)
        \begin{pmatrix}
            0 & -1\\
            1 & 0
        \end{pmatrix},\quad k\in (-c_r,c_r).
    \end{align*}
    \item As $k\rightarrow\infty$ in $\mathbb{C}\setminus[-c_{r}, c_{r}]$, we have $M^{(\infty)}(k)=I+\mathcal{O}(k^{-1})$.
    \item As $k\rightarrow \pm c_{r}$, $M^{(\infty)}(k)=\mathcal{O}((k\mp c_{r})^{-1/4})$.
\end{itemize}
\end{RHP}
Then the unique solution of $M^{(\infty)}(k)$ is given by
\begin{align}\label{equ:sol of Minfty RIII}
    M^{(\infty)}(k)=\Delta_r(k)
\end{align}
with $\Delta_{r}(k)$ defined by \eqref{equ:Delta_j} for the subscript $j$ being chosen as $r$.
For later use, the higher order asymptotic behavior of $M^{(\infty)}(k)$ as
$k\rightarrow c_r$ is
\begin{equation*}
    \begin{aligned}
    M^{(\infty)}(k)=&\frac{(2c_r)^{\frac{1}{4}}}{2(k-c_r)^{1/4}}\left[
        \begin{pmatrix}
            1 & -i\\
            i & 1
        \end{pmatrix}+\frac{(k-c_r)^{\frac{1}{2}}}{(2c_r)^{\frac{1}{2}}}
        \begin{pmatrix}
            1 & i\\
            -i & 1
        \end{pmatrix}+
        \frac{k-c_r}{8c_r}
        \begin{pmatrix}
            1 & -i\\
            i & 1
        \end{pmatrix}\right.\\
        &+\left.\frac{(k-c_r)^{\frac{3}{2}}}{4(2c_r)^{3/2}}
        \begin{pmatrix}
            -1 & -i\\
            i & -1
        \end{pmatrix}
        +\mathcal{O}\left(\left(k-c_r\right)^2\right)
    \right].
\end{aligned}
\end{equation*}
\subsubsection{Local parametrices near $\pm c_{r}$}
Let
\begin{align*}
    D_\varrho(c_r):=\left\{k: |k-c_r|<\varrho \right\}, \quad D_\varrho(-c_r):=\left\{k: |k+c_r|<\varrho \right\}
\end{align*}
be two small disks around $\pm c_r$ respectively, where
\begin{equation*}
\varrho<\frac{1}{3}{\rm min}\left\{ |c_r-\eta_r|, |c_l-c_r| ,t^{-\frac{2}{3}+\epsilon} \right\},\quad \frac{1}{2}<\epsilon<\frac{2}{3}.
\end{equation*}
For $j\in\{r,l\}$, we intend to solve the following local RH problem for $M^{(j)}$.
\begin{RHP}
\hfill
\begin{itemize}
    \item $M^{(j)}(k)$ is holomorphic for $k\in\overline{D_{\varrho}(\pm c_r)}\setminus\Gamma^{(j)}$ \textup{(}``$+$'' for $j=r$, and ``$-$'' for $j=l$\textup{)},
    where
    \begin{align*}
        \Gamma^{(r)}:=D_\varrho(c_r)\cap \Gamma^{(2)},\quad \Gamma^{(l)}:=D_\varrho(-c_r)\cap \Gamma^{(2)}.
    \end{align*}
    \item $M^{(j)}(k)$ has continuous boundary values $M_{\pm}^{(j)}(k)$ on $k\in \Gamma^{(j)}$ with the jump condition
    \begin{align*}
        M^{(j)}_{+}(k)=M^{(j)}_{-}(k)V^{(2)}(k)\big|_{\Gamma^{(j)}}.
    \end{align*}
    \item As $k\rightarrow\infty$ in $\mathbb{C}\setminus\Gamma^{(j)}$, we have $M^{(j)}(k)=I+\mathcal{O}(k^{-1})$.
\end{itemize}
\end{RHP}

In the remainder of this subsection, we focus on the construction of $M^{(r)}$ near $k=c_r$, and
the construction of $M^{(l)}$ near $-c_r$ follows by symmetry. Recall the asymptotics of $g_{\textup{\uppercase\expandafter{\romannumeral2}}}$ from \eqref{g_II asy cr}
\begin{equation*}
     g_{\textup{\uppercase\expandafter{\romannumeral2}}}(k)=g_{\textup{\uppercase\expandafter{\romannumeral2}}}^{(1)}(c_r)(k- c_r)^{\frac{1}{2}}+g_{\textup{\uppercase\expandafter{\romannumeral2}}}^{(2)}(c_r)(k- c_r)^{\frac{3}{2}}+\mathcal{O}((k- c_r)^{\frac{5}{2}}),\quad k\to c_r,
\end{equation*}
where
\begin{equation}\label{equ:coefficients of gII}
    g_{\textup{\uppercase\expandafter{\romannumeral2}}}^{(1)}(c_r)=6(2c_r)^{\frac{1}{2}}(c_r^2+2\xi),\quad g_{\textup{\uppercase\expandafter{\romannumeral2}}}^{(2)}(c_r)=3(2c_r)^{-\frac{1}{2}}(c_r^2+2\xi)+4(2c_r)^{\frac{3}{2}}.
\end{equation}

For $k \in D_{\varrho}(c_r)\setminus(-\infty,c_r)$, we set
\begin{equation}\label{vira cha 1 gII}
    \zeta_r(k)=-\left(\frac{3}{2}\right)^{\frac{2}{3}}t^{\frac{2}{3}}\left[g_{\textup{\uppercase\expandafter{\romannumeral2}}}-g_{\textup{\uppercase\expandafter{\romannumeral2}}}^{(1)}(c_r)(k-c_r)^{\frac{1}{2}}\right]^{\frac{2}{3}},
\end{equation}
which is a one-to-one conformal mapping and $\zeta_r^\prime(c_r)=-\left(\frac{3}{2}\right)^{\frac{2}{3}}t^{\frac{2}{3}}\left|g_{\textup{\uppercase\expandafter{\romannumeral2}}}^{(2)}(c_r)\right|^{\frac{2}{3}}<0$.
Then we define
\begin{equation*}
   S(k)= S(k;\xi):=\begin{cases}
        it\frac{g_{\textup{\uppercase\expandafter{\romannumeral2}}}^{(1)}(c_r)(k-c_r)^{\frac{1}{2}}}{\zeta_r(k)^{\frac{1}{2}}}, &k\in D_{\varrho}(c_r)\cap\mathbb{C}^+,\\
       - it\frac{g_{\textup{\uppercase\expandafter{\romannumeral2}}}^{(1)}(c_r)(k-c_r)^{\frac{1}{2}}}{\zeta_r(k)^{\frac{1}{2}}}, &k\in D_{\varrho}(c_r)\cap\mathbb{C}^-,
    \end{cases}
\end{equation*}
which implies that $S(k)$ is analytic in $D_{\varrho}(c_r)$ and
\begin{equation}\label{defSgII}
    s:=S(c_r)=-t^{\frac{2}{3}}\frac{g_{\textup{\uppercase\expandafter{\romannumeral2}}}^{(1)}(c_r)}{\left(\frac{3}{2}\right)^{\frac{1}{3}}\left|g_{\textup{\uppercase\expandafter{\romannumeral2}}}^{(2)}(c_r)\right|^{\frac{1}{3}}}.
\end{equation}
It can be concluded that
\begin{equation}
    \frac{4}{3}\zeta_r(k)^{\frac{3}{2}}+2S(k)\zeta_r(k)^{\frac{1}{2}}=\begin{cases}
        2itg_{\textup{\uppercase\expandafter{\romannumeral2}}}(k), &k\in D_{\varrho}(c_r)\cap\mathbb{C}^+,\\
        -2itg_{\textup{\uppercase\expandafter{\romannumeral2}}}(k), &k\in D_{\varrho}(c_r)\cap\mathbb{C}^-.
    \end{cases}
\end{equation}
Under the change of variable \eqref{vira cha 1 gII}, the construction of the local parametrix proceeds in a manner analogous to that described in Section \ref{subsubsec: local para pm cl}.
For brevity, we outline only the essential steps below.
Define the local parametrix $\widetilde{M}^{(r)}$ by
\begin{equation}\label{R3Localcr}
    \widetilde M^{(r)}(k)= \widetilde M^{(r)}(k;x,t):=P^{(r)}(k)\begin{pmatrix}
		1 & 0\\
		i a(s) & 1
		\end{pmatrix}M^{P_{34}}(\zeta_r; -\frac{\arg r(c_r)}{2\pi}, 0, s)e^{\hat\theta(\zeta_r)\sigma_3}Q^{(r)}(k),
\end{equation}
where $\hat\theta(\zeta_r)$ is defined as in \eqref{hattheta} and $M^{P_{34}}$ is given in Appendix \ref{p34}.
Here $P^{(r)}$ is the matching factor given by
\begin{equation}\label{equ:Pr RIII}
    P^{(r)}(k):=M^{(\infty)}(k){Q^{(r)}}(k)^{-1} \frac{1}{\sqrt{2}}(I-i\sigma_1)\zeta_{r}(k)^\frac{\sigma_3}{4},
\end{equation}
with $M^{(\infty)}$ defined in \eqref{equ:sol of Minfty RIII}, and
\begin{equation}\label{def:Q^{(r)} III}
    Q^{(r)}(k):=
        \begin{cases}
        \sigma_3,   &k\in\mathbb{C}^{+}\cap \overline{D_{\varrho}\left(c_r\right)}, \\
       \sigma_1,   &k\in\mathbb{C}^{-}\cap \overline{D_{\varrho}\left(c_r\right)}.\\
        \end{cases}
\end{equation}
The function $\widetilde{M}^{(r)}$ serves as an approximation to $M^{(r)}$ for sufficiently large $t$.
From the definition \eqref{R3Localcr}, its jump matrix $\widetilde V^{(r)}(k)$ is given by
\begin{equation*}
        \widetilde V^{(r)}(k)=
            \begin{cases}
            \begin{pmatrix} 1 & 0\\ e^{\arg r(c_r) i}e^{2\hat\theta(\zeta_r)} & 1\end{pmatrix}, &k\in\Gamma^{(2)}_1\cap D_\varrho(c_r),  \\
            \begin{pmatrix} 1 & -e^{-\arg r(c_r)i}e^{-2\hat\theta(\zeta_r)}\\ 0 & 1\end{pmatrix}, & k\in\Gamma^{(2)*}_1\cap D_\varrho(c_r),  \\
            \begin{pmatrix}0 & -1 \\ 1 & 0\end{pmatrix},  & k\in (c_{r}-\varrho, c_{r}),\\
             I,&k\in(c_r,c_r+\varrho).
            \end{cases}
\end{equation*}
It can be verified that $P^{(r)}(k)$ defined in \eqref{equ:Pr RIII} is analytic in $D_{\varrho}\left(c_r\right)$.
Moreover, following the steps to obtain \eqref{asy for Pr}, we can also derive that as $k\to c_r$,
\begin{equation}\label{asymptotic for P^R III}
    P^{(r)}(k)=\begin{pmatrix}
        \frac{1-i}{2}(2c_r)^{\frac{1}{4}}\left(\frac{3}{2}\right)^{\frac{1}{6}}t^{\frac{1}{6}}\left|g_{\textup{\uppercase\expandafter{\romannumeral2}}}^{(2)}(c_r)\right|^{\frac{1}{6}}&\frac{1-i}{2}(2c_r)^{-\frac{1}{4}}\left(\frac{3}{2}\right)^{-\frac{1}{6}}t^{-\frac{1}{6}}\left|g_{\textup{\uppercase\expandafter{\romannumeral2}}}^{(2)}(c_r)\right|^{-\frac{1}{6}}\\\frac{1+i}{2}(2c_r)^{\frac{1}{4}}\left(\frac{3}{2}\right)^{\frac{1}{6}}t^{\frac{1}{6}}\left|g_{\textup{\uppercase\expandafter{\romannumeral2}}}^{(2)}(c_r)\right|^{\frac{1}{6}}&\frac{-1-i}{2}(2c_r)^{-\frac{1}{4}}\left(\frac{3}{2}\right)^{-\frac{1}{6}}t^{-\frac{1}{6}}\left|g_{\textup{\uppercase\expandafter{\romannumeral2}}}^{(2)}(c_r)\right|^{-\frac{1}{6}}
    \end{pmatrix}+\mathcal{O}(|k-c_r|).
\end{equation}

For $k \in D_{\varrho}(-c_r)\setminus(-c_r,+\infty)$, we set
\begin{equation}\label{vira cha 2 gII}
    \zeta_l(k)=-\left(\frac{3}{2}\right)^{\frac{2}{3}}t^{\frac{2}{3}}\left[g_{\textup{\uppercase\expandafter{\romannumeral2}}}(k)-g_{\textup{\uppercase\expandafter{\romannumeral2}}}^{(1)}(c_r)(-k-c_r)^{\frac{1}{2}}\right]^{\frac{2}{3}},
\end{equation}
which is also a one-to-one conformal mapping and $\zeta_l^\prime(-c_r)=\left(\frac{3}{2}\right)^{\frac{2}{3}}t^{\frac{2}{3}}\left|g_{\textup{\uppercase\expandafter{\romannumeral2}}}^{(2)}(c_r)\right|^{\frac{2}{3}}>0$.

Similarly, the RH problem for $\widetilde M^{(l)}$ (an approximation of $M^{(l)}$) can be solved by the symmetry ($\widetilde M^{(l)}(k)=\sigma_1\widetilde M^{(r)}(-k)\sigma_1$).
Then, we obtain the local parametrix
\begin{equation}\label{R3Localcl}
    \widetilde M^{(l)}(k)=P^{(l)}(k)\sigma_1\begin{pmatrix}
		1 & 0\\
		i a(s) & 1
		\end{pmatrix}M^{P_{34}}(\zeta_l; -\frac{\arg r(c_r)}{2\pi}, 0, s)e^{\hat\theta(\zeta_r)\sigma_3}\sigma_1Q^{(l)}(k),
\end{equation}
where
\begin{align}
    &P^{(l)}(k)=\sigma_1P^{(r)}(-k)\sigma_1,&& k \in D_{\varrho}(-c_r),\label{def:Pl TII}\\
    &Q^{(l)}(k)=\sigma_1Q^{(r)}(-k)\sigma_1, && k \in D_{\varrho}(-c_r)\setminus\mathbb{R}.\nonumber
\end{align}
From \eqref{def:Pl TII}, $P^{(l)}(k)$ is also analytic in $D_{\varrho}(-c_r)$ with the following asymptotics as $k\to -c_r$,
\begin{equation}\label{asymptotic for P^l III}
     P^{(l)}(k)= \sigma_1P^{(r)}(c_r)\sigma_1+\mathcal{O}(|k+c_r|),\quad k\to -c_r.
\end{equation}
Now, we introduce the following lemma, which is similar to Lemma \ref{Lemma v3-vr-II}, and we omit its proof.
\begin{lemma}\label{Lemma v3-vr-III}
    For $\xi\in\mathcal{T}_{\textup{\uppercase\expandafter{\romannumeral2}}}$ and $t>0$, the function $\widetilde M^{(j)}$ has the following properties:  For $j\in\{l,r\}$,
    \begin{itemize}
        \item [\rm (a)] As $t\to\infty$,
        \begin{equation*}
            \|\widetilde M^{(j)}(k)M^\infty(k)^{-1}-I\|_{L^\infty(\partial D_\varrho(\pm c_r))}=\mathcal{O}(t^{1/3-\epsilon}).
        \end{equation*}
        Here, the script ``$\pm$'' stands for ``$+$'' when $j=r$, and ``$-$'' when $j=l$.
 \item [\rm (b)] Across $\Gamma^{(j)}$, the jump matrix $\widetilde V^{(j)}$ of $\widetilde M^{(j)}(k)$ satisfies
 \begin{align*}
    & \|V^{(2)}-\widetilde V^{(j)}\|_{L^1(\Gamma^{(j)})}=\mathcal{O}(t^{-1}),\\
    & \|V^{(2)}-\widetilde V^{(j)}\|_{L^2(\Gamma^{(j)})}=\mathcal{O}(t^{-2/3}),\\
     & \|V^{(2)}-\widetilde V^{(j)}\|_{L^\infty(\Gamma^{(j)})}=\mathcal{O}(t^{-1/3}).
    % &\|V^{(3)}-\tilde V^{(j)}\|_{(L^1\cap L^2\cap L^\infty)(\mathbb{R}\cap D_\varrho(\pm \eta_l))}=\mathcal{O}(t^{-1}).
 \end{align*}
\end{itemize}
\end{lemma}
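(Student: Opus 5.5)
The plan follows the proof of Lemma \ref{Lemma v3-vr-II}, working with $j=r$. The case $j=l$ then follows from the symmetries $\widetilde M^{(l)}(k)=\sigma_1\widetilde M^{(r)}(-k)\sigma_1$, $M^{(\infty)}(k)=\sigma_1M^{(\infty)}(-k)\sigma_1$, together with $r_a(-k)=r_a^*(k)$ from Proposition \ref{analytic extension of r RIII}(b).

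\emph{Part (a).} On $\partial D_\varrho(c_r)$ the definitions \eqref{R3Localcr} and \eqref{equ:Pr RIII} give
\[
\widetilde M^{(r)}M^{(\infty)\,-1}=P^{(r)}(k)\begin{pmatrix}1&0\\ ia(s)&1\end{pmatrix}M^{P_{34}}(\zeta_r;-\tfrac{\arg r(c_r)}{2\pi},0,s)\,e^{\hat\theta\sigma_3}\,\tfrac{1}{\sqrt2}(I+i\sigma_1)\,\zeta_r^{-\sigma_3/4}P^{(r)}(k)^{-1}.
\]
Inserting the large-$\zeta$ expansion \eqref{eq:Psi-infinity} of $M^{P_{34}}$ (the same form as \eqref{asy S}), the leading factors cancel and we are left with
\[
\widetilde M^{(r)}M^{(\infty)\,-1}-I=P^{(r)}(k)\Bigl(\tfrac{M_1}{\zeta_r}+\mathcal O(\zeta_r^{-2})\Bigr)P^{(r)}(k)^{-1}.
\]
On the circle $|\zeta_r|\asymp t^{2/3}\varrho\asymp t^{\epsilon}$. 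By \eqref{asymptotic for P^R III}, $P^{(r)}=\mathcal O(t^{1/6})$ and $P^{(r)\,-1}=\mathcal O(t^{1/6})$, so the conjugation costs at most $t^{1/3}$. This gives the bound $\mathcal O(t^{1/3-\epsilon})$. The $s$-dependent coefficients stay bounded because $s$ is bounded in $\mathcal T_{\rm II}$. This is where the nonzero parameter $-\arg r(c_r)/(2\pi)$ of the P34 model enters: the solvability and boundedness of $M^{P_{34}}$ in $s$ are taken from Appendix \ref{p34}.

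\emph{Part (b).} The difference $V^{(2)}-\widetilde V^{(r)}$ vanishes on $(c_r-\varrho,c_r)$. It remains to treat three pieces.

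On $(c_r,c_r+\varrho)$ the difference involves only $r_r$, and Proposition \ref{analytic extension of r RIII}(d) gives $\mathcal O(t^{-1})$ in every $L^p$.

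On $\Gamma_1^{(2)}\cap D_\varrho(c_r)$ we first use the identity $2itg_{\rm II}=\tfrac43\zeta_r^{3/2}+2S(k)\zeta_r^{1/2}$. We then split
\[
r_ae^{2itg}-e^{i\arg r(c_r)}e^{2\hat\theta}=(r_a-r(c_r))e^{2itg}+r(c_r)e^{2\hat\theta}\bigl(e^{2(S(k)-s)\zeta_r^{1/2}}-1\bigr),
\]
using $|r(c_r)|=1$. For the first term, Proposition \ref{analytic extension of r RIII}(c) gives $|r_a-r(c_r)|\lesssim|k-c_r|^{1/2}e^{t|\operatorname{Im}g|/4}$. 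Since $\operatorname{Im}g_{\rm II}\gtrsim t^{-2/3}\cdot$(small)$+|k-c_r|^{3/2}$ on the ray, we get the decay $e^{-ct|k-c_r|^{3/2}}$. The integrals of $u^{1/2}e^{-ctu^{3/2}}$ then give $\mathcal O(t^{-1/3})$ in $L^\infty$, $\mathcal O(t^{-1})$ in $L^1$, and $\mathcal O(t^{-2/3})$ in $L^2$, as in \eqref{equ:asy for1}. For the second term, analyticity of $S$ gives $|S(k)-s|\lesssim|k-c_r|$. Hence $\bigl|e^{2(S-s)\zeta_r^{1/2}}-1\bigr|\lesssim|k-c_r|\,|\zeta_r|^{1/2}$, which is absorbed by $e^{2\hat\theta}$ as in \eqref{equ:asy for2} and is of smaller order. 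The conjugate contour is handled the same way.

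\emph{Main obstacle.} I expect the main difficulty to be confirming that the exponent of the local model matches exactly. This requires that the constant phase $e^{\pm i\arg r(c_r)}$ is correctly produced by the parameter $-\arg r(c_r)/(2\pi)$ of $M^{P_{34}}$ via $Q^{(r)}$, so that no residual $\mathcal O(1)$ jump remains. Once that is verified, the estimates are routine.
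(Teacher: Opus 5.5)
Your proposal is correct and is essentially the paper's argument. The paper omits the proof as a repetition of that of Lemma \ref{Lemma v3-vr-II}: part (a) conjugates the large-$\zeta$ expansion \eqref{eq:Psi-infinity} by $P^{(r)}=\mathcal{O}(t^{1/6})$ on a circle where $|\zeta_r|\asymp t^{\epsilon}$, and part (b) estimates the jump differences as in Proposition \ref{pro:estimate E}, with the same splitting of $r_ae^{2itg_{\textup{II}}}-e^{i\arg r(c_r)}e^{2\hat\theta}$ that you use.

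Two small repairs are needed.
\begin{itemize}
\item The factor coming from $M^{(\infty)}(k)^{-1}$ is $Q^{(r)}(k)^{-1}\frac{1}{\sqrt2}(I-i\sigma_1)\zeta_r^{\sigma_3/4}P^{(r)}(k)^{-1}$, not $\frac{1}{\sqrt2}(I+i\sigma_1)\zeta_r^{-\sigma_3/4}P^{(r)}(k)^{-1}$. With your factor the leading terms leave a residual $i\sigma_1$ instead of cancelling.
\item The quantity $|(S(k)-s)\zeta_r^{1/2}|$ can be as large as $t^{-2/3+3\epsilon/2}$ on $D_\varrho(c_r)$, which is unbounded since $\epsilon>\frac12$. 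So the bound $|e^{2(S-s)\zeta_r^{1/2}}-1|\lesssim|k-c_r||\zeta_r|^{1/2}$ can fail. Use instead $|e^{w}-1|\le|w|\max(1,e^{\operatorname{Re}w})$, which gives $\bigl|e^{2\hat\theta}\bigl(e^{2(S-s)\zeta_r^{1/2}}-1\bigr)\bigr|\lesssim|k-c_r||\zeta_r|^{1/2}\max\bigl(|e^{2\hat\theta}|,|e^{2itg_{\textup{II}}}|\bigr)$. Your stated orders then follow.
\end{itemize}
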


\subsection{Small norm RH problem for $M^{(err)}$}\label{subsec:small norm RH RIII}
Define
\begin{align}\label{def:Merr RIII}
    M^{(err)}(k) =M^{(err)}(k;x,t):=
            \begin{cases}
             M^{(2)}(k)\left(M^{(\infty)}\left(k\right)\right)^{-1}, & k\in \mathbb{C}\setminus\left(D_{\varrho}\left(c_r\right)\cup D_{\varrho}\left(-c_r\right)\right),\\
             M^{(2)}(k)\left(\widetilde M^{(r)}\left(k\right)\right)^{-1}, & k\in D_{\varrho}\left(c_r\right),\\
             M^{(2)}(k)\left(\widetilde M^{(l)}\left(k\right)\right)^{-1}, & k\in D_{\varrho}\left(-c_r\right).
            \end{cases}
    \end{align}
 It is readily seen that $M^{(err)}$ satisfies the following RH problem.
    \begin{RHP}\label{RHP: Merr RIII}
    \hfill
    \begin{itemize}
        \item $M^{(err)}(k)$ is holomorphic for $k\in\mathbb{C}\setminus\Gamma^{(err)}$, where
        \begin{align*}
        \Gamma^{(err)}:=\partial D_{\varrho}\left(c_r\right)\cup\partial D_{\varrho}\left(-c_r\right)\cup \Gamma^{(3)}.
        \end{align*}
        \item $M^{(err)}(k)$ has continuous boundary values $M^{(err)}_\pm(k)$ on $k\in \Gamma^{(err)}$ with the jump condition
        \begin{equation*}
            M^{(err)}_{+}(k)=M^{(err)}_{-}(k)V^{(err)}(k),
        \end{equation*}
        where
         \begin{align}\label{equ:jump Verr RIII}
           V^{(err)}(k)=
                \begin{cases}
                 M^{(\infty)}_-(k)V^{(2)}(k){M^{(\infty)}_+(k)}^{-1}, & k\in\Gamma^{(3)}\backslash \left(\overline{D_\varrho(c_r)}\cup \overline{D_\varrho(-c_r)}\right),\\
                 \widetilde M^{(r)}(k){M^{(\infty)}(k)}^{-1}, & k\in\partial D_\varrho(c_r),\\
               \widetilde  M^{(l)}(k){M^{(\infty)}(k)}^{-1}, & k\in\partial D_\varrho(-c_r),\\
                 \widetilde M^{(r)}_-(k)V^{(2)}(k){\widetilde M^{(r)}_+(k)}^{-1}, & k\in\Gamma^{(3)}\cap D_\varrho(c_r),\\
                \widetilde M^{(l)}_-(k)V^{(2)}(k){\widetilde M^{(l)}_+(k)}^{-1}, & k\in\Gamma^{(3)}\cap D_\varrho(-c_r).
                \end{cases}
        \end{align}
        \item As $k\rightarrow\infty$ in $k\in\mathbb{C}\setminus\Gamma^{(err)}$, we have $M^{(err)}(k)=I+\mathcal{O}(k^{-1})$.
        \item As $k\rightarrow\pm c_r$, we have $M^{(err)}(k)=\mathcal{O}(1)$.
    \end{itemize}
    \end{RHP}

Using items (c)--(d) in Proposition \ref{analytic extension of r RIII} and Lemma \ref{Lemma v3-vr-III},
a straightforward calculation yields the following proposition.
\begin{proposition}\label{pro:est VE-II}
    For $\xi\in\mathcal{T}_\textup{\uppercase\expandafter{\romannumeral2}}$, the function $V^{(err)}(\cdot)-I:\Gamma^{(err)}\to\mathbb{C}^{2\times2}$ lies in $L^p\left(\Gamma^{(err)}\right)$ for $p=1,2,\infty$, and uniformly for $\xi\in\mathcal{T}_\textup{\uppercase\expandafter{\romannumeral2}}$, it holds that
    \begin{align*}
    &\Vert V^{(err)}(k)-I \Vert_{L^p\left(\Gamma^{(err)}\setminus\left(\overline{D_\varrho(c_r)}\cup \overline{D_\varrho(-c_r)}\cup\mathbb{R}\right)\right)}=
    \mathcal{O}(e^{-ct}), \\
    &\Vert V^{(err)}(k)-I \Vert_{L^p\left(\partial D_\varrho(c_r)\cup \partial D_\varrho(-c_r)\right)}=\mathcal{O}(t^{\gamma_p}),\\
    &\Vert V^{(err)}(k)-I \Vert_{L^p\left(\mathbb{R}\setminus\left(\overline{D_\varrho(c_r)}\cup \overline{D_\varrho(-c_r)}\right)\right)}=\mathcal{O}(t^{-1}),\\
    &\Vert V^{(err)}(k)-I \Vert_{L^p\left(\Gamma^{(err)}\cap \left(D_\varrho(c_r)\cup D_\varrho(-c_r)\right)\right)}=\mathcal{O}(t^{\omega_p}).
\end{align*}
where $c>0$,
\begin{align*}
    \gamma_1=-1/3,\;\gamma_2=-\frac{\epsilon}{2},\gamma_\infty=\frac{1}{3}-\epsilon,\quad
\omega_1=-1,\;\omega_2=-\frac{2}{3},\omega_\infty=-\frac{1}{3}.
\end{align*}
In particular, uniformly for $\xi\in\mathcal{T}_\textup{\uppercase\expandafter{\romannumeral2}}$, it holds that
\begin{equation}
    \Vert V^{(err)}(k)-I \Vert_{L^p\left(\Gamma^{(err)}\right)}=\mathcal{O}(t^{\gamma_p}), \quad p=1,2,\infty.
\end{equation}
\end{proposition}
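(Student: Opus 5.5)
The plan is to follow the proof of Proposition \ref{pro:est VE-I} piece by piece, with the $\mathcal{T}_{\textup{\uppercase\expandafter{\romannumeral1}}}$ ingredients swapped for their $\mathcal{T}_{\textup{\uppercase\expandafter{\romannumeral2}}}$ analogues. First I write $V^{(err)}-I$ case by case on the contour. Outside the two disks, on the real line where $M^{(\infty)}$ has no jump, it equals $M^{(\infty)}(V^{(2)}-I)(M^{(\infty)})^{-1}$. On the part of $(-c_r,c_r)$ outside the disks it is $M^{(\infty)}_-(V^{(2)}-V^{(\infty)})(M^{(\infty)}_+)^{-1}$, which vanishes identically. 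On $\partial D_\varrho(\pm c_r)$ it is $\widetilde M^{(j)}(M^{(\infty)})^{-1}-I$. Inside the disks it is $\widetilde M^{(j)}_-(V^{(2)}-\widetilde V^{(j)})(\widetilde M^{(j)}_+)^{-1}$. Throughout, $M^{(\infty)}=\Delta_r$ and its inverse are bounded away from $\pm c_r$.

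On the lens rays $\Gamma^{(2)}_j\cup\Gamma^{(2)*}_j$ outside the disks, I use the signature table in Figure \ref{fig:signs img RIII}. There $\mp\operatorname{Im}g_{\textup{\uppercase\expandafter{\romannumeral2}}}\ge c>0$ at distance $\varrho$ from $\pm c_r$, and it grows like $|k|^3$ at infinity. Combining this with item (c) of Proposition \ref{analytic extension of r RIII}, the factor $r_a e^{2itg_{\textup{\uppercase\expandafter{\romannumeral2}}}}$ is bounded by $e^{-\frac{3t}{4}|\operatorname{Im}g_{\textup{\uppercase\expandafter{\romannumeral2}}}|}/(1+|k|^2)$. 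This gives the $\mathcal{O}(e^{-ct})$ bound in every $L^p$. On $\mathbb{R}\setminus(\overline{D_\varrho(c_r)}\cup\overline{D_\varrho(-c_r)})$ only $r_r$ enters, since $|e^{\pm 2itg_{\textup{\uppercase\expandafter{\romannumeral2}}}}|=1$ on the real line. Item (d) of Proposition \ref{analytic extension of r RIII} then gives $\mathcal{O}(t^{-1})$ in $L^1\cap L^2\cap L^\infty$.

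On the circles, Lemma \ref{Lemma v3-vr-III}(a) gives the $L^\infty$ bound $t^{1/3-\epsilon}$. For the $L^1$ and $L^2$ bounds I will not just multiply by the circle length. Instead I expand $P^{(r)}(M_1^{P_{34}}/\zeta_r)(P^{(r)})^{-1}$ using \eqref{asymptotic for P^R III}. The conjugation by $t^{\pm 1/6}$ inflates the $(1,2)$ entry by $t^{1/3}$, while $|\zeta_r|\asymp t^{2/3}\varrho$ on $\partial D_\varrho(c_r)$. Integrating over a circle of length $\sim\varrho$ then produces $\gamma_1=-1/3$ and $\gamma_2=-\epsilon/2$, where $\varrho$ sits at the scale $t^{-2/3+\epsilon}$. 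Inside the disks, $\widetilde M^{(j)}_\pm$ is bounded by $P^{(r)}$ times the bounded Painlev\'e XXXIV solution. Near $\zeta_r=0$ the weights are $\mathcal{O}(|\zeta_r|^{-1/4})$, and these are absorbed exactly as in the proof of Proposition \ref{pro:estimate E}. Lemma \ref{Lemma v3-vr-III}(b) then yields the exponents $\omega_p$.

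Finally, the uniform bound is the maximum over the pieces. Since $\frac12<\epsilon<\frac23$, every exponent is dominated by $\gamma_p$. The step I expect to be the main obstacle is the disk and circle estimate, where the Painlev\'e XXXIV parameter $-\arg r(c_r)/(2\pi)$ is nonzero. There I must check that the difference $r_a(k)-r(c_r)=\mathcal{O}(|k-c_r|^{1/2})$, compared against the model jump carrying $e^{\pm i\arg r(c_r)}$, still fits the $L^p$ scaling computed in \eqref{equ:asy for1}--\eqref{equ:asy for2}. This matters because $|r(c_r)|=1$ at that point and no $D$-function has been used to normalise it.
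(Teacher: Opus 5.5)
You follow the same route as the paper, whose proof just repeats that of Proposition~\ref{pro:est VE-I}: decompose $V^{(err)}-I$ piece by piece, then use the signature table with items (c) and (d) of Proposition~\ref{analytic extension of r RIII}, and Lemma~\ref{Lemma v3-vr-III}(a) and (b). Your circle estimate is fine; it is the $L^\infty$ bound times $(\text{length}\asymp\varrho)^{1/p}$ with $\varrho\asymp t^{-2/3+\epsilon}$.

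\textbf{The gap.} The step that fails is ``Lemma~\ref{Lemma v3-vr-III}(b) then yields the exponents $\omega_p$'', and your own circle computation shows why.

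\emph{The $t^{1/3}$ inflation.} Write $\widetilde M^{(r)}=P^{(r)}N$ with $N=\begin{pmatrix}1&0\\ ia(s)&1\end{pmatrix}M^{P_{34}}e^{\hat\theta\sigma_3}Q^{(r)}$. Since $\zeta_r$ carries the factor $t^{2/3}$, you have $P^{(r)}(k)=\widetilde P(k)\,t^{\sigma_3/6}$ with $\widetilde P$ bounded and boundedly invertible on the disk. Hence inside $D_\varrho(c_r)$ you get $V^{(err)}-I=\widetilde P\,t^{\sigma_3/6}Xt^{-\sigma_3/6}\,\widetilde P^{-1}$, where $X:=N_-(V^{(2)}-\widetilde V^{(r)})N_+^{-1}$. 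So $X_{12}$ is inflated by $t^{1/3}$, exactly as $(M_1^{P_{34}})_{12}$ is on the circle.

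\emph{Why this gives $\mathcal{O}(1)$.} On $\Gamma^{(2)}_1$, $V^{(2)}-\widetilde V^{(r)}$ is lower triangular with entry $\delta=r_ae^{2itg_{\mathrm{II}}}-r(c_r)e^{2\hat\theta}$. Because $b_{r,1}\neq0$, $r_a-r(c_r)\approx ib_{r,1}(k-c_r)^{1/2}$, so $|\delta|\asymp t^{-1/3}$ where $|\zeta_r|\asymp1$. Now $X_{12}$ is a multiple of $\delta N_{12}^2$, and $N_{12}\not\equiv0$ there. Therefore $|V^{(err)}-I|\asymp1$ on a $k$-set of length $\asymp t^{-2/3}$. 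Equivalently, $|[\widetilde M^{(r)}]_2|^2\asymp|k-c_r|^{-1/2}$ exactly cancels the gain $|k-c_r|^{1/2}$; the scaling is critical.

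\emph{Consequences.} Inside the disks you only get $\mathcal{O}(t^{-2/3})$, $\mathcal{O}(t^{-1/3})$, $\mathcal{O}(1)$ for $p=1,2,\infty$. The uniform bound $\|V^{(err)}-I\|_{L^\infty}=\mathcal{O}(t^{1/3-\epsilon})$, which the small-norm argument needs, is lost. Appealing to Proposition~\ref{pro:estimate E} does not repair this, because that estimate lives in the $\zeta_r$-plane, before conjugation by $P^{(r)}$.

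\emph{Relation to the paper and a possible repair.} The paper reads $\omega_p$ off Lemma~(b) in the same way, so this is not an idea it supplies that you missed. To prove the statement you would first have to remove the leading nilpotent piece $t^{1/3}X_{12}\,\widetilde P(c_r)E_{12}\widetilde P(c_r)^{-1}$, with $E_{12}=\begin{pmatrix}0&1\\0&0\end{pmatrix}$, by an explicit transformation. Alternatively, you could build the $(k-c_r)^{1/2}$ term of $r$ into the local model. Either way you must then track its contribution to $M^{(err)}_1$, which is a priori of order $t^{-2/3}$, the same order as the correction in Theorem~\ref{thm:mainthm}.

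\textbf{Smaller points.} With $\varrho\asymp t^{-2/3+\epsilon}$, your claim $|\operatorname{Im}g_{\mathrm{II}}|\ge c>0$ at distance $\varrho$ from $\pm c_r$ is false. There $\operatorname{Im}g_{\mathrm{II}}\asymp\varrho^{3/2}$, giving $\mathcal{O}(e^{-ct^{3\epsilon/2}})$ rather than $\mathcal{O}(e^{-ct})$. Likewise $M^{(\infty)}$ is only $\mathcal{O}(\varrho^{-1/4})$ near the disks, so the real-line $L^\infty$ bound is $t^{-1}\varrho^{-1/2}$ unless $r_r$ vanishes at $\pm c_r$. Both are harmless for the final estimate.

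The obstacle you flag is not one. Since $|b_{r,0}|=1$, $r(c_r)=e^{i\arg r(c_r)}$ exactly, and this phase is matched by the choice $b=-\arg r(c_r)/(2\pi)$ in $M^{P_{34}}$. The real difficulty is the $b_{r,1}$ term combined with the conjugation above.
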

\begin{proof}
    The proof is similar to that of Proposition \ref{pro:est VE-I}.
\end{proof}

The solution for $M^{(err)}$ can be given by
\begin{equation}\label{BCsolforErrorR3}
   M^{(err)}(k) = I + \frac{1}{2\pi i} \int_{\Gamma^{(err)}} \frac{(I + \varpi(z))(V^{(err)}(z) - I)}{z - k} \dif z,
\end{equation}
where $\varpi\in L^{2}(\Gamma^{(err)})$ is the unique solution of $(1 - C_{V^{(err)}})\varpi = C_{V^{(err)}} I$.
Analogous to the estimate \eqref{CverrL2}, we have
$\|C_{V^{(err)}}\|_{L^2(\Gamma^{(err)})} = \mathcal{O}(t^{1/3-\epsilon})$,
which implies that $1 - C_{V^{(err)}}$ is invertible for large positive $t$ in this case.

Moreover, by \eqref{BCsolforErrorR3}, it follows that
%as $k\to\infty$,
%For later use, we conclude this section with behavior of $M^{(err)}(k)$ at $k=\infty$.
%By \eqref{equ:BCsolforError}, it follows that
\begin{equation*}
    M^{(err)}(k)=I+\frac{M^{(err)}_1}{k}+\mathcal{O}(k^{-2}), \quad k\rightarrow\infty,
\end{equation*}
where
\begin{equation*}
    M^{(err)}_1=-\frac{1}{2\pi i}\int_{\Gamma^{(err)}}\left(I + \varpi(z)\right)(V^{(err)}(z)-I)\dif z.
\end{equation*}
Then we obtain the following proposition.

\begin{proposition}\label{prop: result of Merr_1 TII}
   For $\xi\in\mathcal{T}_\textup{\uppercase\expandafter{\romannumeral2}}$, as $t\rightarrow+\infty$,
    \begin{equation*}\label{equ:Merr_1 TII}
    \begin{aligned}
         M^{(err)}_1=&\left(\frac{3}{2}\right)^{-\frac{2}{3}}\left|g_{\textup{\uppercase\expandafter{\romannumeral2}}}^{(2)}(c_r)\right|^{-\frac{2}{3}}\begin{pmatrix}-it^{-\frac{1}{3}}(2c_r)^{\frac{1}{2}}\left(\frac{3}{2}\right)^{\frac{1}{3}}\left|g_{\textup{\uppercase\expandafter{\romannumeral2}}}^{(2)}(c_r)\right|^{\frac{1}{3}}a(s)&-it^{-\frac{2}{3}}\left[2^{\frac{1}{3}}a(s)q(-2^{\frac{1}{3}}s)+u(s)+\frac{s}{2}\right]\\it^{-\frac{2}{3}}\left[2^{\frac{1}{3}}a(s)q(-2^{\frac{1}{3}}s)+u(s)+\frac{s}{2}\right]&it^{-\frac{1}{3}}(2c_l)^{\frac{1}{2}}\left(\frac{3}{2}\right)^{\frac{1}{3}}\left|g_{\textup{\uppercase\expandafter{\romannumeral2}}}^{(2)}(c_r)\right|^{\frac{1}{3}} a(s)
  \end{pmatrix}\\&+\mathcal{O}(t^{\frac{1}{3}-2\epsilon}),
    \end{aligned}
    \end{equation*}
    where $g_{\textup{\uppercase\expandafter{\romannumeral2}}}^{(2)}(c_r)$ and $s$ are given by \eqref{equ:coefficients of gII} and \eqref{defSgII} respectively. Here, $a(s)$ is defined as in \eqref{p34entry-1}, $q(s)$ and $u(s)$ satisfy Painlev\'e \textup{\uppercase\expandafter{\romannumeral2}} equation in \eqref{painleve2} and Painlev\'e \textup{\uppercase\expandafter{\romannumeral34}} equation in \eqref{Bp34} respectively.
\end{proposition}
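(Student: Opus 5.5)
I will mirror the proof of Proposition \ref{prop: result of Merr_1 TI}. First I split $M^{(err)}_1$ into the same four pieces. $I_1$ is the integral of $\varpi(V^{(err)}-I)$ over $\Gamma^{(err)}$. $I_2$ is the contribution of $\Gamma^{(err)}$ away from the disks $D_\varrho(\pm c_r)$. $I_3$ is the contribution of the circles $\partial D_\varrho(\pm c_r)$. $I_4$ is the contribution of $\Gamma^{(err)}\cap(D_\varrho(c_r)\cup D_\varrho(-c_r))$.

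The three error pieces come straight from Proposition \ref{pro:est VE-II}. Since $\|C_{V^{(err)}}\|_{L^2}=\mathcal{O}(t^{1/3-\epsilon})$, the Neumann-series bound analogous to \eqref{estiCVE} and Cauchy--Schwarz give $I_1=\mathcal{O}(t^{1/3-2\epsilon})$. The same proposition gives $I_2=\mathcal{O}(e^{-ct})+\mathcal{O}(t^{-1})$; the real-line part is controlled by item (d) of Proposition \ref{analytic extension of r RIII}. It also gives $I_4=\mathcal{O}(t^{-1})$. All of these are absorbed into $\mathcal{O}(t^{1/3-2\epsilon})$.

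For $I_3^{(r)}$ I write $V^{(err)}-I=\widetilde M^{(r)}(M^{(\infty)})^{-1}-I$ on $\partial D_\varrho(c_r)$. Using \eqref{R3Localcr}, \eqref{equ:Pr RIII} and the large-$\zeta$ expansion \eqref{eq:Psi-infinity} of $M^{P_{34}}(\zeta;-\frac{\arg r(c_r)}{2\pi},0,s)$, this equals
\begin{equation*}
P^{(r)}(k)\begin{pmatrix}1&0\\ia(s)&1\end{pmatrix}\left(\frac{M^{P_{34}}_1(s)}{\zeta_r}+\mathcal{O}(\zeta_r^{-2})\right)\begin{pmatrix}1&0\\-ia(s)&1\end{pmatrix}P^{(r)}(k)^{-1}.
\end{equation*}
Here the conjugation by the lower-triangular factor must be kept, since it is not absorbed into the normalization. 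Because $\zeta_r$ is conformal with $\zeta_r'(c_r)=-(\frac32)^{2/3}t^{2/3}|g^{(2)}_{\rm II}(c_r)|^{2/3}$ and $P^{(r)}$ is analytic in the disk, the residue theorem reduces the integral to $P^{(r)}(c_r)$ from \eqref{asymptotic for P^R III} times the $\zeta^{-1}$ coefficient, divided by $\zeta_r'(c_r)$. Since $P^{(r)}(c_r)$ has columns of size $t^{1/6}$ and $t^{-1/6}$, the entries of the conjugated coefficient get weighted by $t^{0}$, $t^{\pm1/3}$. I keep the $t^{-1/3}$ and $t^{-2/3}$ terms in the final matrix and push everything else, including the $\mathcal{O}(\zeta_r^{-2})$ and $\mathcal{O}(|k-c_r|)$ corrections, into $\mathcal{O}(t^{-1})$. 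The contribution $I_3^{(l)}$ follows from the symmetry $\widetilde M^{(l)}(k)=\sigma_1\widetilde M^{(r)}(-k)\sigma_1$ together with $\zeta_l'(-c_r)=-\zeta_r'(c_r)$, exactly as in \eqref{equ:I_3L TI}. This symmetry should make the diagonal entries opposite and the off-diagonal entries antisymmetric.

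The main obstacle is the identification of the entries. After conjugating by $\begin{pmatrix}1&0\\ ia(s)&1\end{pmatrix}$, the relevant combination of $(M_1^{P_{34}})_{11}$, $(M_1^{P_{34}})_{12}$, $(M_1^{P_{34}})_{22}$ and $a(s)$ must be expressed through the Hamiltonian relations of Appendix \ref{p34}. The goal is the combination $2^{1/3}a(s)q(-2^{1/3}s)+u(s)+\frac s2$, where $u$ is the P34 function and $q$ is the PII function with parameter $\nu$ tied to $b=-\frac{\arg r(c_r)}{2\pi}$. For the $(1,2)$ entry, which carries the $t^{-2/3}$ scale, I would first write $M_1^{P_{34}}$ from \eqref{p34entry-1} in terms of $u$ and the PII Hamiltonian. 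Then I would expand the triangular conjugation and check that the terms quadratic in $a(s)$ cancel against the corresponding entries. The diagonal $t^{-1/3}$ term should come out proportional to $a(s)$ after adding the $r$ and $l$ contributions. Finally I would check the constant $(\frac32)^{-2/3}|g^{(2)}_{\rm II}(c_r)|^{-2/3}$ and the signs against the case $b=0$ of part (I), where $a(s)$ reduces to the Airy-type expression.
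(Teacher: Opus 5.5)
Your overall route is the paper's: the split into $I_1,\dots,I_4$, only the two circles contributing at orders $t^{-1/3}$ and $t^{-2/3}$, the residue at $c_r$ through $\zeta_r'(c_r)$, and the left disk via $\widetilde M^{(l)}(k)=\sigma_1\widetilde M^{(r)}(-k)\sigma_1$. The genuine problem is your formula for $V^{(err)}-I$ on $\partial D_\varrho(c_r)$, which is wrong and changes the answer.

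In \eqref{R3Localcr} the factor $\begin{pmatrix}1&0\\ ia(s)&1\end{pmatrix}$ multiplies $M^{P_{34}}$ from the left. In \eqref{eq:Psi-infinity} the factor $\begin{pmatrix}1&0\\ -ia(s)&1\end{pmatrix}$ also sits on the far left, in front of $I+M_1^{P_{34}}/\zeta$, so the two cancel identically. Using also $\bigl(\tfrac{1}{\sqrt2}(I-i\sigma_1)\bigr)^{-1}=\tfrac{1}{\sqrt2}\begin{pmatrix}1&i\\ i&1\end{pmatrix}$, one gets exactly
\[
\widetilde M^{(r)}(k)M^{(\infty)}(k)^{-1}-I=P^{(r)}(k)\Bigl(\frac{M_1^{P_{34}}(s)}{\zeta_r}+\mathcal{O}(\zeta_r^{-2})\Bigr)P^{(r)}(k)^{-1},
\]
with no triangular conjugation; this is what the paper uses (cf.\ the proof of Lemma \ref{Lemma v3-vr-II}(a)). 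Your conjugated coefficient keeps the $(1,2)$ entry $ia(s)$, but its diagonal becomes $(M_1^{P_{34}})_{11}+a^2$ and $(M_1^{P_{34}})_{22}-a^2$. The $t^{-2/3}$ off-diagonal entries of $M_1^{(err)}$ are proportional to the difference of these diagonal entries. You would therefore get the bracket $2^{1/3}a\,q(-2^{1/3}s)+u+\frac s2-2a(s)^2$, and nothing removes the $-2a^2$, so your planned step ``the terms quadratic in $a(s)$ cancel'' fails. Your own $b=0$ check would expose this: there $a=s^2/4$, and an extra $s^4/8$ would appear next to $\frac14(s^2\mathrm{Ai}'/\mathrm{Ai}-2s)$.

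With the correct integrand the rest is short. Write $P^{(r)}(c_r)=A\,t^{\sigma_3/6}$, with $A$ read off from \eqref{asymptotic for P^R III}, and set $\kappa^2=(2c_r)^{1/2}(\frac32)^{1/3}|g^{(2)}_{\mathrm{II}}(c_r)|^{1/3}$ and $m_{jk}=(M_1^{P_{34}}(s))_{jk}$. Then
\[
A\,\mathrm{diag}(m_{11},m_{22})A^{-1}=\tfrac{m_{11}+m_{22}}{2}I+\tfrac{m_{11}-m_{22}}{2}\sigma_2,\qquad A E_{12}A^{-1}=\tfrac{\kappa^2}{2}\begin{pmatrix}1&i\\ i&-1\end{pmatrix}.
\]
The left disk contributes $-\sigma_1(\cdot)\sigma_1$ of the right one, which kills the trace part and the off-diagonal part of the $E_{12}$ term:
\[
I_3=-\frac{(\frac32)^{-2/3}|g^{(2)}_{\mathrm{II}}(c_r)|^{-2/3}}{t^{2/3}}\Bigl[t^{1/3}\kappa^2 m_{12}\,\sigma_3+(m_{11}-m_{22})\,\sigma_2\Bigr]+\mathcal{O}(t^{-1}).
\]
So $m_{12}=ia(s)$ feeds only the diagonal $t^{-1/3}$ terms. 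The $(1,2)$ entry at order $t^{-2/3}$ comes from \eqref{p34entry-2}--\eqref{p34entry-3}, which give $m_{11}-m_{22}=-2^{-1/3}(q_s+qH)(-2^{1/3}s)$. Differentiating \eqref{p34entry-1} with $H'=-q^2$ gives $a'(s)=2^{-1/3}(q^2+q_s)(-2^{1/3}s)$. Hence, using $u=a'-\frac s2$,
\[
m_{11}-m_{22}=-\bigl(2^{1/3}a(s)q(-2^{1/3}s)+a'(s)\bigr)=-\bigl(2^{1/3}a(s)q(-2^{1/3}s)+u(s)+\tfrac s2\bigr),
\]
and this reproduces the stated matrix.
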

\begin{proof}
The definition of  $M^{(err)}_1$ in \eqref{equ:Merr_1 TI} implies that
   \begin{equation*}
    \begin{aligned}
   M^{(err)}_1=&-\frac{1}{2\pi i}\oint_{\partial D_\varrho(c_r)\cup\partial D_\varrho(c_r)}\left(V^{(err)}(z)-I\right)\dif z+\mathcal{O}(t^{\frac{1}{3}-2\epsilon})\\
    =&-\frac{1}{2\pi i}\oint_{\partial D_\varrho(c_r)}\frac{1}{\zeta_r}P^{(r)}(z)\begin{pmatrix}
        \left(M_1^{P_{34}}(s)\right)_{11}&t^{1/3}\left(M_1^{P_{34}}(s)\right)_{12}\\0&\left(M_1^{P_{34}}(s)\right)_{22}
    \end{pmatrix} P^{(r)}(z)^{-1}\dif z\\
    &-\frac{1}{2\pi i}\oint_{\partial D_\varrho(-c_r)}\frac{1}{\zeta_l}\sigma_1P^{(r)}(z)\begin{pmatrix}
        \left(M_1^{P_{34}}(s)\right)_{11}&t^{1/3}\left(M_1^{P_{34}}(s)\right)_{12}\\0&\left(M_1^{P_{34}}(s)\right)_{22}
    \end{pmatrix} P^{(r)}(z)^{-1}\sigma_1\dif z\\
    &+\mathcal{O}(t^{\frac{1}{3}-2\epsilon}).
\end{aligned}
\end{equation*}
Using the residue theorem and the definition of $\zeta_j$ for $j\in\{l,r\}$ in \eqref{vira cha 1 gII} and \eqref{vira cha 2 gII} completes the proof.
\end{proof}
\subsection{Proof of the part ($\textup{\uppercase\expandafter{\romannumeral2}}$) of Theorem \ref{thm:mainthm}}
By tracing back the transformations \eqref{def:M1 RIII}, \eqref{def:M2 RIII}, and \eqref{def:Merr RIII},
we conclude that, for $k\in\mathbb{C}\setminus\Gamma^{(2)}$,
\begin{equation}
    M(k)=M^{(err)}(k)M^{(\infty)}(k)e^{-it(g_{\textup{\uppercase\expandafter{\romannumeral2}}}(k)-\theta(k))\sigma_3}.
\end{equation}
Together with the reconstruction formula stated in \eqref{equ:recovering formula}, we  obtain
\begin{align*}
q(x,t)=2i\left[(M^{(err)}_1)_{12}+\lim_{k\rightarrow\infty}k\left(\Delta_{r}(k)\right)_{12}\right].
\end{align*}
Based on \eqref{equ:sol of Minfty RIII} and \eqref{equ:Merr_1 TII},  we obtain part (\textup{\uppercase\expandafter{\romannumeral2}}) of Theorem \ref{thm:mainthm}.

\section*{Acknowledgments}
The work of Fan is partially supported by NSFC under grant number 12271104.
\hspace*{\parindent}

\appendix	
\renewcommand\thefigure{\Alph{section}\arabic{figure}}
\setcounter{figure}{0}
\renewcommand{\theequation}{\thesection.\arabic{equation}}
\setcounter{equation}{0}
\section{The Painlev\'e XXXIV parametrix}\label{p34}

\begin{figure}[h]
	\centering
	\begin{tikzpicture}[scale=1.66]
	\draw [thick](-2,0)node[left]{\footnotesize$\Sigma_3$}--(0,0) ;
	\draw [thick](-1,1)node[above]{\footnotesize$\Sigma_2$}--(0,0);
	\draw [thick] (0,0)--(2,0)node[right]{\footnotesize$\Sigma_1$};
	\draw [thick] (-1,-1)node[below]{\footnotesize$\Sigma_4$}--(0,0) ;
	\draw[thick, -latex](-2,0)--(-1,0);
	\draw[thick, -latex](-1,1)--(-0.5,0.5);
	\draw[thick, -latex](-1,-1)--(-0.5,-0.5);
	\draw[thick, -latex](0,0)--(1,0);	
	\draw (0.3,0.2)node{\footnotesize$\Omega_1$} (-0.5,0.2)node{\footnotesize$\Omega_2$} (-0.5,-0.2)node{\footnotesize$\Omega_3$} (0.3,-0.2)node{\footnotesize$\Omega_4$};
	\end{tikzpicture}
	\caption{The jump contour of $M^{P_{34}}$.}
	\label{figp34}
\end{figure}

The Painlev\'e XXXIV parametrix satisfies the following RH problem.

\begin{RHP}\label{RHPp34}
	Find a $2\times 2$ matrix valued function $M^{P_{34}}(\zeta):=M^{P_{34}}(\zeta; b, \omega, s)$ with the following properties:
	\begin{itemize}
		\item $M^{P_{34}}(\zeta)$ is analytic in $
		\mathbb{C} \setminus \{\cup_{j=1}^4\Sigma_j\cup\{0\}\}$, where
		\begin{align*}
		\Sigma_1=\R^+, ~~ \Sigma_2=\E^{\frac{2 \pi i}{3}}\R^+, ~~\Sigma_3=\E^{   \pi i   }\R^+,~~\Sigma_4=\E^{-\frac{2 \pi i}{3}}\R^+
		\end{align*}
		with the orientations as shown in Figure \textup{\ref{figp34}}.
		
		\item $M^{P_{34}}(\zeta)$  satisfies the jump condition
		\begin{equation*}
		M^{P_{34}}_+ (\zeta)=M^{P_{34}}_- (\zeta)
		\left\{ \begin{array}{ll}
		\begin{pmatrix}
		1 & \omega
		\\
		0 & 1
		\end{pmatrix}, &\quad \zeta \in {\Sigma}_1,
		\\[9pt]
		\begin{pmatrix}
		1 & 0 \\
		e^{2b\pi i} & 1
		\end{pmatrix}, &\quad \zeta \in {\Sigma}_2,
		\\[9pt]
		\begin{pmatrix}
		0 & 1 \\
		-1 & 0
		\end{pmatrix},& \quad
		\zeta \in {\Sigma}_3,
		\\[9pt]
		\begin{pmatrix}
		1 & 0 \\
		e^{-2b\pi i} & 1
		\end{pmatrix},  & \quad \zeta \in \Sigma_4.
		\end{array}
		\right .
		\end{equation*}
		
		\item As $\zeta \to \infty$, there exists a function $a(s):=a(s;b,\omega)$ such that
		\begin{align} \label{eq:Psi-infinity}
		M^{P_{34}}(\zeta) = \begin{pmatrix}
		1 & 0\\
		-i a(s) & 1
		\end{pmatrix}
		\left(I+\frac {M^{P_{34}}_1(s)}{\zeta}
		+\mathcal{O} \left( \zeta^{-2} \right) \right)
		\frac{\zeta^{-\frac{1}{4}\sigma_3}}{\sqrt{2}}
		\begin{pmatrix}
		1 & i
		\\
		i & 1
		\end{pmatrix} e^{-(\frac{2}{3}\zeta^{\frac{3}{2}}+s\zeta^{\frac{1}{2}}) \sigma_3},
		\end{align}
		where we take the  principle branch for the fractions and
		\begin{align}
		\left(M^{P_{34}}_1(s)\right)_{12}&=i 2^{-\frac{2}{3}}(H-q)(-2^{\frac{1}{3}}s):=ia(s),\label{p34entry-1}\\
        \left(M^{P_{34}}_1(s)\right)_{11}&=2^{-\frac{7}{3}}(H^2-q^2)(-2^{\frac{1}{3}}s)-2^{-\frac{4}{3}}(q_s+qH)(-2^{\frac{1}{3}}s),\label{p34entry-2}\\
        \left(M^{P_{34}}_1(s)\right)_{22}&=2^{-\frac{7}{3}}(H^2-q^2)(-2^{\frac{1}{3}}s)+2^{-\frac{4}{3}}(q_s+qH)(-2^{\frac{1}{3}}s),\label{p34entry-3}
		\end{align}
		with $q(s)$ is the generalized Hastings-Mcleod solution of the Painlev\'e \textup{II} equation
        \begin{equation}\label{painleve2}
            q''(s)=sq(s)+2q^3-\nu,\quad \nu=2b+\frac{1}{2},
        \end{equation}
        which is characterized by the following asymptotics
        \begin{equation*}
            q(s)=\begin{cases}
                \frac{\nu}{s}+\mathcal{O}(s^{-4}), &s\to+\infty,\\
               \sqrt{-\frac{s}{2}}+\mathcal{O}(s^{-1}), &s\to-\infty.
            \end{cases}
        \end{equation*}
        Moreover, $H(s)$ is the Hamiltonian for Painlev\'e \rm{II} equation
        \begin{equation}\label{hamil for p2}
            H(s)=\left(q'(s)\right)^2-sq^2(s)-q^4(s)+2\nu q(s),\quad  H'(s)=-q^2.
        \end{equation}
		\item As $\zeta \to 0$, we have, if  $-\frac{1}{2} < b < 0$,
		\begin{equation*}
		M^{P_{34}}(\zeta)=\bigo{\zeta^{b}},
		\end{equation*}
		and if $b \geq 0$,
		\begin{equation*}
		M^{P_{34}}(\zeta)=\left\{ \begin{array}{ll}
		\begin{pmatrix}
		\bigo{\zeta^{b}} & \bigo{\zeta^{-b}}
		\\
		\bigo{\zeta^{b}} & \bigo{\zeta^{-b}}
		\end{pmatrix}, &\quad \zeta \in \Omega_1\cup\Omega_4,
		\\
		\bigo{\zeta^{-b}}, &\quad \zeta \in \Omega_2\cup\Omega_3,
		\end{array}
		\right .
		\end{equation*}
		 where the domains $\Omega_j$, $j=1,2,3,4$ are shown in Figure \ref{figp34}.
		
	\end{itemize}
\end{RHP}

By \cite{ikj2008,its2009}, the above RH problem is uniquely solvable for $b>-\frac{1}{2}$, $\omega \in \mathbb{C}\setminus (-\infty,0)$, and $s\in \mathbb{R}$. Moreover, with $a(s)$ given in
\eqref{eq:Psi-infinity}, the function
$$
u(s)=u(s;b,\omega):=a'(s;b,\omega)-\frac s2
$$
satisfies the  Painlev\'{e}  XXXIV equation
\begin{equation}\label{Bp34}
u''(s)=4u(s)^2+2su(s)+\frac{u'(s)^2-(2b)^2}{2u(s)}.
\end{equation}
%and is pole-free on the real axis.
Particularly, one has
\begin{equation*}
u(s;b, 0)=\left\{ \begin{array}{ll}
\frac{b}{\sqrt{s}}+\bigo{s^{-2}}, &\qquad s\to +\infty,
\\
-\frac{s}{2}+\bigo{s^{-2}}, &\qquad s\to -\infty.
\end{array}\right .
\end{equation*}
This, together with the fact that $a(s;b,0)\to 0$ as $s\to -\infty$, implies that
\begin{equation*}
a(s;b,0)=\int_{-\infty}^s \left( u(z;b, 0) + \frac z2\right) \dd z.
\end{equation*}

Moreover, for $\xi \in \mathcal{T}_\textup{\uppercase\expandafter{\romannumeral1}}$, we consider the special case of the RH problem \ref{RHPp34} with $b=0$, which by \eqref{painleve2} implies that  $\nu=1/2$. It can be inferred from \cite[Section 4.1]{ikj2008} that $q(s)$ admits a special solution that can be expressed in terms of the Airy function as
\begin{equation}\label{equ:spe p2}
    q(s)=-2^{-\frac{1}{3}}\frac{\textup{Ai} '(-2^{-\frac{1}{3}}s)}{\textup{Ai} (-2^{-\frac{1}{3}}s)}.
\end{equation}
Indeed, from the asymptotics of Airy functions, we have
\begin{equation*}
    q(s)\sim\frac{1}{2}\sqrt{2}(-s)^{\frac{1}{2}},\quad s\to-\infty.
\end{equation*}
%Moreover, it can be calculated that for $\zeta\in\Omega_3$, $M^{P_{34}}(\zeta)$ takes the form
%\begin{equation*}
 %   M^{P_{34}}(\zeta;0,\omega,s)=\sqrt{2\pi}\begin{pmatrix}
 %       e^{\pi i/3}\textup{Ai} \left( e^{-2\pi i/3}(\zeta+s)\right)&-e^{-\pi i/3}\textup{Ai} \left( e^{2\pi i/3}(\zeta+s)\right)\\-ie^{-\pi i/3}\textup{Ai}' \left( e^{-2\pi i/3}(\zeta+s)\right)&ie^{\pi i/3}\textup{Ai} '\left( e^{2\pi i/3}(\zeta+s)\right)
%    \end{pmatrix}.
%\end{equation*}
Substituting the formula \eqref{equ:spe p2} into $H(s)$ in \eqref{hamil for p2}, and using the properties of the Airy function ($\textup{Ai}'' (s)=s\textup{Ai} (s)$), we  obtain
\begin{align}
		\left(M^{P_{34}}_1(s)\right)_{12}&= \frac{s^2i}{4},\label{speentry-1}\\
        \left(M^{P_{34}}_1(s)\right)_{11}&=\frac{s^4}{32}-\frac{s}{4},\label{speentry-2}\\
        \left(M^{P_{34}}_1(s)\right)_{22}&=-\frac{s^2}{4}\frac{\textup{Ai} '(s)}{\textup{Ai} (s)}+\frac{s^4}{32}+\frac{s}{4}.\label{speentry-3}
		\end{align}

\section*{Acknowledgements}
\addcontentsline{toc}{section}{Acknowledgements}
This work is supported by the National Natural Science Foundation of China   {(Grant No. 12271104)}.

\end{document}